\documentclass[11pt]{amsart}
\usepackage{amscd}
\usepackage{url,enumerate,color}
\usepackage[cmtip, all]{xy}

\usepackage[top=30mm,right=30mm,bottom=30mm,left=20mm]{geometry}

\usepackage{amsfonts}
\usepackage{enumitem}
\usepackage{amsmath}
\usepackage{array}
\usepackage{amssymb}
\usepackage{amsthm}
\usepackage{hyperref}
\usepackage{mathrsfs}
\usepackage{cite}
\usepackage{aliascnt}
\usepackage{centernot}
\usepackage{dsfont}
\usepackage{stmaryrd}
\usepackage{tikz-cd}
\DeclareMathOperator{\Irr}{Irr}
\DeclareMathOperator{\GL}{GL}

\DeclareMathOperator{\SU}{SU}

\DeclareMathOperator{\SL}{SL}
\DeclareMathOperator{\PSL}{PSL}

\DeclareMathOperator{\PGL}{PGL}
\DeclareMathOperator{\Aut}{Aut}
\DeclareMathOperator{\Char}{\mathsf{Char}}
\DeclareMathOperator{\inv}{\textsf{inv}}
\DeclareMathOperator{\geom}{{geom}}
\DeclareMathOperator{\arith}{{arith}}

\DeclareMathOperator{\Trace}{Trace}
\DeclareMathOperator{\Norm}{Norm}

\DeclareMathOperator{\lcm}{lcm}

\DeclareMathOperator{\Wild}{\mathsf{Wild}}
\DeclareMathOperator{\Tame}{\mathsf{Tame}}
\DeclareMathOperator{\diag}{diag}

\DeclareMathOperator{\Weil}{\mathsf{Weil}}

\begin{document}
\numberwithin{equation}{section}
\setlength{\parindent}{15pt}

\newtheorem*{dfn}{Definition}
\newtheorem*{qst}{Question}
\newtheorem{thm}{Theorem}[section]
\newcommand{\thmautorefname}{Theorem}

\newtheorem*{thm*}{Theorem}

\newaliascnt{prop}{thm}
\newtheorem{prop}[prop]{Proposition}
\aliascntresetthe{prop}
\newcommand{\propautorefname}{Proposition}

\newaliascnt{lem}{thm}
\newtheorem{lem}[lem]{Lemma}
\aliascntresetthe{lem}
\newcommand{\lemautorefname}{Lemma}

\newaliascnt{crl}{thm}
\newtheorem{crl}[crl]{Corollary}
\aliascntresetthe{crl}
\newcommand{\crlautorefname}{Corollary}
\theoremstyle{definition}
\newaliascnt{rmk}{thm}
\newtheorem{rmk}[rmk]{Remark}
\aliascntresetthe{rmk}
\newcommand{\rmkautorefname}{Remark}

\title{Hypergeometric Sheaves and Finite General Linear Groups}
\author{Lee Tae Young}
\address{Department of Mathematics, Rutgers University, Piscataway, NJ 08854, USA}
\email{tl639@math.rutgers.edu}

\maketitle

\begin{abstract}
We find all irreducible hypergeometric sheaves whose geometric monodromy group is finite, almost quasisimple and has the projective special linear group $\PSL_n(q)$ with $n\geq 3$ as a composition factor. We use the classification of semisimple elements with specific spectra in irreducible Weil representations to prove that if an irreducible hypergeometric sheaf has such geometric monodromy group, then it must be of certain form. Then we extend results of Katz and Tiep on a prototypical family of such sheaves to full generality to show that these hypergeometric sheaves do have such geometric monodromy groups, and that they have some connection to a construction of Abhyankar. 
\end{abstract}

\textbf{2010 Mathematics Subject Classification:} Primary 11T23, Secondary 20C15, 20C33, 20D06

\textbf{Keywords:} Finite General Linear Groups, Monodromy Groups, Hypergeometric Sheaves, Weil Representations.

\tableofcontents

\subsection*{Introduction}

Let $p$ be a prime and let $q$ be a power of $p$. A conjecture of Abhyankar \cite{Ab57}, proved by Harbater \cite{Har}, says that the finite quotient groups of the \'etale fundamental group of the multiplicative group $\mathbb{G}_m:= \mathbb{A}^1\setminus\{0\}$ over $\overline{\mathbb{F}_p}$ are precisely the finite groups $G$ such that $G/\mathbf{O}^{p'}(G)$ is cyclic, where $\mathbf{O}^{p'}(G)$ is the subgroup generated by all Sylow $p$-subgroups of $G$. Since this proof is nonconstructive, one might want to realize each of these groups in a faithful complex representation as the monodromy group of an explicitly written $\overline{\mathbb{Q}_\ell}$-local system over $\mathbb{G}_m$.

Hypergeometric sheaves are the simplest rigid local systems over $\mathbb{G}_m$, in the sense that they have the sum of Swan conductors equal to $1$, which is the lowest possible nonzero value. Katz, Rojas-Leon and Tiep \cite{KT1}, \cite{KT3}, \cite{KRLT4} used hypergeometric sheaves to realize many quotient groups of $\pi_1^{et}(\mathbb{G}_m/\mathbb{F}_q)$. In \cite{KT2}, Katz and Tiep studied the converse direction: they showed that if the geometric monodromy group of a hypergeometric sheaves satisfying a mild condition $(\mathbf{S}+)$ is finite, then it is either almost quasisimple or an ``extraspecial normalizer''. For the almost quasisimple case, they also gave a list of all possible pairs $(S,V)$ of finite simple groups $S$ and their complex representations $V$ which can occur as the unique nonabelian simple factor of the geometric monodromy group of irreducible hypergeometric sheaves, cf. \cite[Section 10]{KT2}.

In this paper, we return to the original viewpoint of constructing local systems that realize given finite groups as their monodromy groups. We focus on one of the five ``generic'' families of finite almost quasisimple groups listed in \cite[Section 10]{KT2}, namely those with the unique nonabelian composition factor $\PSL_n(\mathbb{F}_q)$ with $n\geq 3$. Katz and Tiep \cite{KT1} found some examples of hypergeometric sheaves realizing some of these groups. However, whether these are the only hypergeometric sheaves realizing these groups, and the analogous statements for other families of finite almost quasisimple groups, were not known. Here we give the first result in this direction: we give a complete list of irreducible hypergeometric sheaves for groups ``coming from'' $\PSL_n(\mathbb{F}_q)$. 

The main results of this paper, namely \autoref{large_rank}, \autoref{small_rank} and \autoref{Ggeom_of_constituents}, says that for $n\geq 3$, the irreducible hypergeometric sheaves whose geometric monodromy group is finite almost quasisimple having $\PSL_n(\mathbb{F}_q)$ as the unique nonabelian composition factor are precisely those of the form 
\begin{align*}
\mathcal{H}yp_\psi(\Char(\frac{q^n-1}{q-1},\chi^{(b+c)j}); \Char(\frac{q^m-1}{q-1}, \chi^{bj}) \cup \Char(\frac{q^{n-m}-1}{q-1}, \chi^{cj}))\otimes \mathcal{L}_\varphi
\end{align*}
or
\begin{align*}
 \mathcal{H}yp_\psi(\Char(\frac{q^n-1}{q-1})\setminus\{\mathds{1}\}; \Char(\frac{q^m-1}{q-1}) \cup (\Char(\frac{q^{n-m}-1}{q-1})\setminus\{\mathds{1}\}))\otimes \mathcal{L}_\varphi
\end{align*}
where $\varphi$ is a multiplicative character of $\overline{\mathbb{F}_p}$ of finite order, $\chi$ is that of order precisely $q-1$, $\psi$ is a nontrivial additive character of $\mathbb{F}_p$, and $n, b, c$ are integers satisfying certain conditions. 

In section 1, we fix notations and review some known facts about irreducible hypergeometric sheaves and their monodromy groups, which will be needed in the subsequent sections. In section 2, we set up some notations regarding the Weil representations of $\GL_n(\mathbb{F}_q)$, and make some observations which will be used in subsequent sections. Section 3 studies the action of certain $p$-subgroups of $\GL_n(\mathbb{F}_q)$ on the irreducible Weil modules. Together with some facts we review in section 1, the results of section 2 and 3 completely determine the possible local monodromies at $0$ and $\infty$ of the hypergeometric sheaves we want to study. However, the local pictures at these two points are studied separately, so we need to determine which pairs of them can actually arise as local monodromy of a hypergeometric sheaf with the desired geometric monodromy group. In section 4, we use two new ideas to achieve this: the use of determinant sheaves of irreducible hypergeometric sheaves to find a connection between the local monodromies at $0$ and $\infty$, and some new techniques to find counterexamples for certain inequality called the ``$V$-test''. Thanks to these, we obtain short lists \autoref{large_rank} and \autoref{small_rank} of candidate hypergeometric sheaves. In section 5, we prove that these sheaves do have the desired geometric monodromy groups. This is done by extending the method used in \cite{KT1} to study a smaller family of hypergeometric sheaves. This family in \cite{KT1} also had some connection to a work of Abhyankar \cite{Ab94}. We briefly discuss a generalization of this connection to the sheaves in \autoref{large_rank} and \autoref{small_rank} in the final section.

\subsection*{Acknowledgments}
I would like to thank my Ph. D. advisor, Professor Pham Huu Tiep, for raising this problem and his devoted support and guidance. I would also thank Professor Nicholas Katz for helpful discussions. I gratefully acknowledge the support of NSF (grants DMS-1840702 and DMS-2200850). 

\section{Preliminary Results and the Basic Set-up}

Let $\overline{\mathbb{F}_p}$ be the algebraic closure of the finite field of characteristic $p$. Fix another prime $\ell\neq p$, and let $\overline{\mathbb{Q}}_\ell$ be the algebraic closure of the field of $\ell$-adic numbers. Throughout this paper, we will not distinguish between the $\mathbb{C}$-representations and $\overline{\mathbb{Q}_\ell}$-representations of finite groups. Let $K$ be a finite subfield of $\overline{\mathbb{F}_p}$. We will understand lisse $\overline{\mathbb{Q}_\ell}$-local systems over $\mathbb{G}_m/K$ as continuous representations of the \'etale fundamental group $\pi_1^{et}(\mathbb{G}_m/K)$. The Zariski closures of the image of $\pi_1^{et}(\mathbb{G}_m/K)$ and the subgroup $\pi_1^{\geom}(\mathbb{G}_m) :=\pi_1^{et}(\mathbb{G}_m/\overline{\mathbb{F}_p})$ under this representation are called the \emph{arithmetic} and \emph{geometric monodromy group}, often denoted by $G_{\arith}$ and $G_{\geom}$, respectively, of this sheaf. 

To study the monodromy of lisse $\overline{\mathbb{Q}_\ell}$-sheaves over $\mathbb{G}_m$, we need to look at their local monodromy at $0$ and $\infty$, that is, the restrictions to the inertia subgroups $I(0)$ and $I(\infty)$ of $\pi_1^{et}(\mathbb{G}_m)$. The inertia subgroup $I(0)$ has a normal pro-$p$-subgroup, namely the wild inertia subgroup $P(0)$. The quotient $I(0)/P(0)$ is a pro-cyclic group of pro-order prime to $p$. Fix an element $\gamma_0$ of $I(0)$ of pro-order prime to $p$, such that $\gamma_0 P(0)$ is a topological generator of $I(0)/P(0)$. Similarly, fix $\gamma_\infty\in I(\infty)$ of pro-order prime to $p$ such that $\gamma_\infty P(\infty)$ is a topological generator of $I(\infty)/P(\infty)$. 

Let $\psi: \mathbb{F}_p \rightarrow  \overline{\mathbb{Q}_\ell}^\times$ be a nontrivial additive character. Given $D$ multiplicative characters $\chi_1,\dots,\chi_D: K^\times \rightarrow \overline{\mathbb{Q}_\ell}^\times$ and $m$ multiplicative characters $\rho_1,\dots, \rho_m: K^\times \rightarrow \overline{\mathbb{Q}_\ell}^\times$, where $D>m\geq 0$, one can define the \emph{hypergeometric sheaf of type $(D,m)$}: $$\mathcal{H}yp_\psi(\chi_1,\dots,\chi_D; \rho_1,\dots,\rho_m).$$ We will assume that $\{\chi_1,\dots,\chi_D\}\cap \{\rho_1,\dots,\rho_m\}=\emptyset$. Under this assumption, this is lisse on $\mathbb{G}_m/K$, geometrically irreducible, has rank $D$, and pure of weight $D+m-1$. For the details of these facts and other basic theory of hypergeometric sheaves, see \cite[Chapter 8]{K-ESDE}.

The characters $\chi_1,\dots, \chi_D$ are called the ``upstairs characters'' of this hypergeometric sheaf. The local monodromy at $0$ of this sheaf is tame, and given by the direct sum of Jordan blocks of the Kummer sheaves $\mathcal{L}_{\chi_i}$ defined by the upstairs characters. In particular, the characters $\chi_1,\dots, \chi_D$ must be pairwise distinct whenever the geometric monodromy group is finite, and in this case the local monodromy is just the direct sum $\bigoplus_{i=1}^{D}\mathcal{L}_{\chi_i}$. Since it is tame, we can view it as a continuous representation of $I(0)/P(0) = \langle \gamma_0P(0)\rangle$. In particular, the image of $\gamma_0$ will completely determine this representation (up to isomorphism). 

The ``downstairs characters'' $\rho_1,\dots, \rho_m$ have a similar but slightly different property. The hypergeometric sheaf $\mathcal{H}yp_\psi(\chi_1,\dots,\chi_D; \rho_1,\dots,\rho_m)$ is not tame at $\infty$, so the local monodromy at $\infty$ can be written as a direct sum $\Tame \oplus \Wild$. Here, $\Tame$ is a tame representation of rank $m$ determined by the downstairs characters in the same way as how the upstairs characters determine the local monodromy at $0$. In addition to that, we have a totally wild part $\Wild$ of dimension $D-m$ and Swan conductor $1$. Hence, to determine the downstairs character, looking at the image of $\gamma_\infty$ alone is insufficient; we should also look at the image of $P(\infty)$ and use the following result.
\begin{prop}\label{wild_basic_properties}{\upshape{\cite[Proposition 4.10]{KT2}\cite[Proposition 5.9]{KRLT4}}}
Let $\mathcal{H}$ be an irreducible hypergeometric sheaf of type $(D,m)$ with $D>m\geq 0$. If $D-m = p^a W_0$ for some integer $a\geq 0$ and $p\nmid W_0$, then we have the following:
\begin{enumerate}[label={\upshape{(\roman*)}}]
\item $\Wild|_{P(\infty)}$ is a direct sum of $W_0$ multiplicative translates of $P|_{P(\infty)}$ by $\mu_{W_0}$, where $P$ is an irreducible $I(\infty)$-representation of dimension $p^a$ and Swan conductor $1$.
\item $\gamma_\infty$ cyclically permutes these $W_0$ irreducible constituents of $\Wild|_{P(\infty)}$.
\item If $a=0$, then the image of $P(\infty)$ is isomorphic to the additive group of the finite field $\mathbb{F}_p(\mu_{D-m})$.
\item If $a>0$, then there exists a root of unity $\zeta$ whose order is prime to $p$, such that the spectrum of $\gamma_\infty^{W_0}$ on each irreducible constituents of $\Wild|_{P(\infty)}$ is $\zeta\cdot (\mu_{p^a+1}\setminus \{1\})$. 
\end{enumerate}
\end{prop}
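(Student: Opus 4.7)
The plan is to combine the general structure theory of wild local monodromy at $\infty$ (breaks and Swan conductors) with Clifford theory applied to the normal pro-$p$ subgroup $P(\infty) \trianglelefteq I(\infty)$, and then sharpen the picture using Kummer pullbacks by $[W_0]$.

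First I would invoke the standard facts from \cite[Chapter 8]{K-ESDE} that $\Wild$ is irreducible as an $I(\infty)$-representation, of dimension $D-m$, totally wild with a single break equal to $1/(D-m)$; the single break is forced by Swan$\,=\,$break$\,\cdot\,$dim in the presence of one break. Restricting to the normal subgroup $P(\infty)$, whose quotient $I(\infty)/P(\infty)$ is pro-cyclic topologically generated by $\gamma_\infty P(\infty)$, Clifford theory then shows that $\Wild|_{P(\infty)}$ is a direct sum of $I(\infty)$-conjugate irreducible constituents, all of common dimension $p^b$ (a power of $p$, because the image of $P(\infty)$ is a finite $p$-group), and $\gamma_\infty$ cyclically permutes them; this already yields (ii).

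To identify $b=a$ and to pin down the number of constituents as $W_0$, I would pass to the Kummer pullback $[W_0]^*$ on $\mathbb{G}_m$. Pullback by $[W_0]$ multiplies breaks by $W_0$, converting $1/(p^a W_0)$ into $1/p^a$. A break-and-rank count, combined with the projection formula $[W_0]_*[W_0]^* \mathcal{F} \cong \bigoplus_{\chi \in \widehat{\mu_{W_0}}} \mathcal{F} \otimes \mathcal{L}_\chi$, forces $[W_0]^*\Wild$ to split as $W_0$ irreducible pieces of dimension $p^a$ and Swan conductor $1$ each; pushing back down, the constituents of $\Wild|_{P(\infty)}$ are recognized as the multiplicative $\mu_{W_0}$-translates of a single one, yielding (i). Part (iii) then follows because for $a=0$ the constituents are one-dimensional Artin--Schreier characters built from $\psi$, and their $W_0 = D-m$ Galois conjugates together generate, as an $\mathbb{F}_p$-module, the additive group of $\mathbb{F}_p(\mu_{D-m})$.

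The main obstacle is (iv): computing the spectrum of $\gamma_\infty^{W_0}$ on a single $p^a$-dimensional constituent $P|_{P(\infty)}$. My approach would be to identify this constituent as a Weil (Schr\"odinger) representation of a finite Heisenberg-type $p$-group quotient of $P(\infty)$ arising from successive graded pieces of the upper-numbering filtration on $P(\infty)$, equipped with the symplectic pairing induced by $\psi$. Once this is set up, $\gamma_\infty^{W_0}$ acts by an automorphism normalizing this Heisenberg quotient; after absorbing a scalar $\zeta$ coming from the action on the center, the remaining $\gamma_\infty^{W_0}/\zeta$ corresponds to a regular semisimple element of the ambient $\Sp_{2a}(\mathbb{F}_p)$-action on the Weil representation, whose eigenvalues are classically (via Gauss sum calculations) the nontrivial $(p^a+1)$-st roots of unity, giving the spectrum $\zeta \cdot (\mu_{p^a+1}\setminus\{1\})$. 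This spectrum computation is the delicate step and is where I would spend most of the effort.
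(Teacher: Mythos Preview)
The paper does not prove this proposition at all: it is stated with an explicit citation to \cite[Proposition 4.10]{KT2} and \cite[Proposition 5.9]{KRLT4}, and no argument is given in the present paper. So there is no in-paper proof to compare your proposal against; the result is imported wholesale as background.

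That said, a few remarks on your outline relative to what is known from those references. Your treatment of (i)--(iii) is essentially the standard one: irreducibility of $\Wild$ as an $I(\infty)$-representation with a single break $1/(D-m)$, Clifford theory for $P(\infty)\trianglelefteq I(\infty)$ with pro-cyclic tame quotient, and the Kummer pullback by $[W_0]$ to isolate a single Swan-$1$ piece of rank $p^a$. One small point: you assert that the image of $P(\infty)$ is a finite $p$-group ``because'' of Clifford theory; the finiteness is rather a standard $\ell\neq p$ fact (a pro-$p$ group has finite image in $\GL_n(\overline{\mathbb{Q}_\ell})$), and should be stated as such.

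For (iv), your Heisenberg/Weil-representation heuristic is in the right spirit but is where the real content lies, and your sketch is too optimistic as written. The claim that ``a regular semisimple element of $\Sp_{2a}(\mathbb{F}_p)$'' has spectrum $\mu_{p^a+1}\setminus\{1\}$ on the Weil representation is not true for an arbitrary regular semisimple element; it holds only for elements of a very specific anisotropic torus of order $p^a+1$. You would need to show that the conjugation action of $\gamma_\infty^{W_0}$ on the relevant extraspecial quotient of the image of $P(\infty)$ lands in exactly that torus, which is precisely the nontrivial step carried out in \cite{KRLT4}. So your plan for (iv) identifies the right framework but does not yet contain the key identification.
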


If an irreducible hypergeometric sheaf is not primitive, then it is either Kummer induced or Belyi induced, and both cases can be easily recognized from the upstairs and downstairs characters, cf. \cite[Proposition 1.2]{KRLT2}. If we restrict ourselves to the primitive cases, then \cite[Theorem 5.2.9]{KT4} and \cite[Lemma 1.1]{KT2} tells us that if our hypergeometric sheaf is of type $(D,m)$ with $D>m$ and $D\neq 4,8,9$, and if the geometric monodromy group $G_{\geom}$ of this sheaf is finite, then $G_{\geom}$ is either almost quasisimple or an ``extraspecial normalizer''. Moreover, if $D$ is not a prime power and not $1$, then $G_{\geom}$ is almost quasisimple, and the unique nonabelian composition factor and its representation are as listed in \cite[Section 10]{KT2}.

In this paper, we study the sheaves which realize \cite[Section 10, case (b)]{KT2}: the irreducible (but not necessarily primitive) hypergeometric sheaves whose geometric monodromy groups are finite, almost quasisimple with unique nonabelian composition factor $\PSL_n(\mathbb{F}_q)$, $n\geq 3$. Such hypergeometric sheaves are known to exist, and one construction can be found in \cite{KT1}. Most of such hypergeometric sheaves are known to satisfy several nice properties. We will need some of these properties, which we list below for convenience.
\begin{prop}\label{basic_facts}{\upshape{\cite[Proposition 8.15.2]{K-ESDE}\cite[Theorem 6.6(ii), Theorem 7.3, Theorem 8.1, Corollary 8.4]{KT2}}}
Let $\mathcal{H}yp_\psi(\chi_1,\dots,\chi_D; \rho_1,\dots,\rho_m)$ be an irreducible hypergeometric sheaf over $\mathbb{G}_m/K$ for some finite field $K$. Suppose that the geometric monodromy group $G$ is finite, almost quasisimple with unique nonabelian composition factor $\PSL_n(\mathbb{F}_q)$ for an integer $n\geq 3$ and a power $q$ of a prime $p$. Then:
\begin{enumerate}[label={\upshape{(\alph*)}}]
\item The upstairs and downstairs characters are pairwise distinct.
\item The characteristic of $K$ is $p$, unless $(n,q)$ is one of $(3,2), (4,2), (3,3), (3,4)$ and $D\leq 22$.
\item Suppose that $(n,q)\neq (3,4)$. Then the quasisimple layer $E(G)$ of $G$ is a quotient of $\SL_n(\mathbb{F}_q)$. Moreover, the monodromy representation as a representation of $E(G)$ is an irreducible Weil representation of this quotient of $\SL_n(\mathbb{F}_q)$.
\item If $(n,q)\neq (3,2), (3,3), (3,4)$, then the image $\overline{g_0}$ of $\gamma_0$ under the map $\pi_1^{et}(\mathbb{G}_m) \rightarrow  G \rightarrow  G/\mathbf{Z}(G) \subseteq \Aut(\PSL_n(\mathbb{F}_q))$ lies in $\PGL_n(\mathbb{F}_q)$. If in addition $D-m\geq 2$, then $G/\mathbf{Z}(G)\cong \PGL_n(\mathbb{F}_q)$.
\end{enumerate}
\end{prop}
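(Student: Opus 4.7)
My plan is to treat the four parts in turn, since they rely on different machinery.

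Part (a) is the most elementary. Since $G_{\geom}$ is finite, the tame local monodromy at $0$ and the tame part of the local monodromy at $\infty$ both have finite image. The tame monodromy at $0$ is $\bigoplus_i \mathcal{L}_{\chi_i}$ restricted to $I(0)$, viewed as a representation of the pro-cyclic group $I(0)/P(0)$. If two upstairs characters coincided, the sheaf structure forces an indecomposable Jordan block of size at least $2$ in the image, which has infinite order—contradicting finiteness. The analogous argument applied to the tame component at $\infty$ yields pairwise distinct downstairs characters.

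Part (b) is the deepest. I would argue by contradiction: if the characteristic of $K$ is $\ell \neq p$, then the image of $P(\infty)$ in $G$ is a nontrivial $\ell$-subgroup $W$ with the rigid structure described in \autoref{wild_basic_properties}. But $W$ must act on the Weil representation of $\PSL_n(\mathbb{F}_q)$ as a cross-characteristic $\ell$-subgroup, where the possible $\ell$-structure is tightly constrained by Zsigmondy-type bounds on semisimple orders and by cross-characteristic Brauer character data for Weil representations. Matching this against the rigid Jordan structure of \autoref{wild_basic_properties}(iii)--(iv) fails outside of the listed small cases $(n,q) \in \{(3,2), (4,2), (3,3), (3,4)\}$ with $D \leq 22$, which must be ruled in or out by a direct enumeration.

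Part (c) begins with the observation that the quasisimple layer $E(G)$ of an almost quasisimple group with simple quotient $\PSL_n(\mathbb{F}_q)$ is a perfect central extension, hence a quotient of the universal cover of $\PSL_n(\mathbb{F}_q)$; outside the anomalous Schur multiplier case $(n,q)=(3,4)$, this universal cover is $\SL_n(\mathbb{F}_q)$, so $E(G)$ is a quotient of $\SL_n(\mathbb{F}_q)$. The monodromy representation restricted to $E(G)$ is faithful and irreducible of dimension $D$; by the classification of low-dimensional cross-characteristic representations of finite quasisimple classical groups due to Guralnick and Tiep, any such representation in the relevant dimension range must be a Weil representation.

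For part (d), since $\gamma_0$ has pro-order prime to $p$, its image in $G/\mathbf{Z}(G)$ is a $p'$-element of $\Aut(\PSL_n(\mathbb{F}_q))$. The outer automorphism group is generated by diagonal, field, and graph automorphisms; Weil representations are preserved (up to isomorphism) by diagonal automorphisms but permuted nontrivially by field and graph ones. The requirement that this image act on the Weil representation through the specific tame Jordan structure $\bigoplus_i \mathcal{L}_{\chi_i}$ rules out field and graph contributions, placing the image in $\PGL_n(\mathbb{F}_q)$. When $D-m\geq 2$, the rigid structure of $\Wild$ via \autoref{wild_basic_properties} contributes enough additional elements to exhaust the full diagonal subgroup, giving $G/\mathbf{Z}(G)\cong \PGL_n(\mathbb{F}_q)$. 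The hardest step is clearly part (b), whose rigorous proof in \cite[Theorem 6.6(ii)]{KT2} requires reconciling the sheaf-theoretic rigidity at $\infty$ with cross-characteristic Sylow structure via a delicate case analysis.
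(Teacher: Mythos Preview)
The paper does not prove this proposition at all; it is recorded as a compilation of results from \cite[Proposition 8.15.2]{K-ESDE} and \cite[Theorems 6.6(ii), 7.3, 8.1, Corollary 8.4]{KT2}, with no argument given beyond the citations. So there is no in-paper proof to compare your sketch against, only the original sources.

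That said, your outline has a concrete error in part (c). You assert that ``outside the anomalous Schur multiplier case $(n,q)=(3,4)$, this universal cover is $\SL_n(\mathbb{F}_q)$.'' This is false: for $n\geq 3$ the groups $\PSL_3(2)\cong \PSL_2(7)$ and $\PSL_4(2)\cong A_8$ also have exceptional Schur multiplier of order $2$, while $\SL_3(2)=\PSL_3(2)$ and $\SL_4(2)=\PSL_4(2)$, so in these cases the universal cover is strictly larger than $\SL_n(\mathbb{F}_q)$. Yet part (c) does \emph{not} exclude $(3,2)$ or $(4,2)$. The argument in \cite{KT2} therefore needs more than the Schur multiplier observation: one must rule out the exceptional covers (e.g.\ $2.L_3(2)$ has a faithful irreducible of degree $6$, the same as the smaller Weil dimension for $\GL_3(2)$) using additional input about the sheaf, not just the dimension.

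Your sketch of part (d) is also off the mark. The actual mechanism in \cite[Theorem 8.1]{KT2} is not that field and graph automorphisms ``permute Weil representations nontrivially''; it is an element-order argument. Because $\gamma_0$ has simple spectrum on a representation of dimension $D\geq \frac{q^n-q}{q-1}$, the image $\overline{g_0}$ is a $p'$-element of order at least $D$, and one checks that no $p'$-element of $\Aut(\PSL_n(\mathbb{F}_q))$ lying outside $\PGL_n(\mathbb{F}_q)$ can have order that large (this is where the small exclusions $(3,2),(3,3),(3,4)$ enter). Likewise, the second assertion of (d) comes from \cite[Corollary 8.4]{KT2} via \cite[Theorem 4.1]{KT2}, which uses the structure of the image of $P(\infty)$ when $D-m\geq 2$, not a vague ``exhausting the diagonal subgroup.'' Your sketch of (a) is fine, and your description of (b) is at the right level of vagueness for what is indeed a long case analysis in the source.
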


When $D-m=1$, \cite[Corollary 8.4]{KT2} (which is the second part of \autoref{basic_facts}(d)) does not apply, since it relies on \cite[Theorem 4.1]{KT2} which requires $D-m\geq 2$. However, we can at least prove the following weaker version.

\begin{prop}\label{D-m=1}
Let $\mathcal{H}$ be a hypergeometric sheaf as in \autoref{basic_facts}, and let $G=G_{\geom}$ be the geometric monodromy group of this. Assume that the conclusion of \autoref{basic_facts}(c) holds. Suppose that $D-m=1$ and $(n,q)\neq (3,2), (3,3),(3,4)$. Let $g_\infty\in G$ be the image of $\gamma_\infty\in I(\infty)$ in $G$, and let $\overline{g_\infty}\in G/\mathbf{Z}(G)$ be its image. Then $\overline{g_\infty}\in \PGL_n(\mathbb{F}_q)$.
\end{prop}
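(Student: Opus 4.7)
The plan is to exploit the fact that when $D-m=1$, \autoref{wild_basic_properties}(iii) (with $W_0=1$ and $a=0$) forces the image $Q$ of the wild inertia $P(\infty)$ in $G$ to be cyclic of order $p$. Let $\sigma$ be a generator of $Q$. Since $P(\infty)$ acts trivially on the tame part $\Tame$ (of dimension $D-1$) and by a nontrivial $p$-th root of unity $\zeta$ on the $1$-dimensional wild part $\Wild$, the element $\sigma$ acts on the monodromy representation $V$ with spectrum $\{1^{(D-1)},\zeta\}$. In particular, $\sigma$ fixes a subspace of codimension exactly $1$ in $V$.

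The first step is to pin down $\overline{\sigma}\in G/\mathbf{Z}(G)\subseteq \Aut(\PSL_n(\mathbb{F}_q))$ using this very restrictive spectrum. Since $V|_{E(G)}$ is an irreducible Weil representation of a quotient of $\SL_n(\mathbb{F}_q)$ by \autoref{basic_facts}(c), I would invoke the classification of $p$-elements of $\Aut(\PSL_n(\mathbb{F}_q))$ acting on irreducible Weil representations, which is the subject of Sections 2 and 3 of the paper. The target conclusion is that $\overline{\sigma}$ is the image of a transvection in $\SL_n(\mathbb{F}_q)$: transvections are the essentially unique $p$-elements of $\PSL_n(\mathbb{F}_q)$ whose fixed subspace on the Weil representation has codimension $1$, while $p$-elements of $\Aut(\PSL_n(\mathbb{F}_q))$ involving nontrivial field- or graph-automorphism components act with strictly smaller fixed subspaces (field automorphisms of order $p$ have fixed subgroups $\SL_n(\mathbb{F}_{q^{1/p}})$, whose invariants in the Weil rep cut out a subspace of codimension much larger than $1$).

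Once $\overline{\sigma}$ is identified as (the image of) a transvection $\tau$, the second step is to normalize. Since $\gamma_\infty$ normalizes $P(\infty)$, the element $\overline{g_\infty}$ normalizes $\langle \overline{\sigma}\rangle$ in $\Aut(\PSL_n(\mathbb{F}_q))$; equivalently, $g_\infty$ preserves the intrinsic eigenspace decomposition of $\sigma$ on $V$, and in particular preserves the $1$-dimensional $\zeta$-eigenspace $\Wild$. The transvection $\tau$ is determined by a flag $L\subset H$ (line inside hyperplane) in the natural module $\mathbb{F}_q^n$, and the part of the normalizer of $\langle \overline{\sigma}\rangle$ inside $\PGL_n(\mathbb{F}_q)$ is the stabilizer of this flag. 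Because $n\geq 3$ forces $\dim L\neq \dim H$, the graph automorphism sends $\tau$ to a transvection associated with a dual flag lying in a different long root group, and combined with the constraint that $\overline{g_\infty}$ preserves the intrinsic eigenspace of $\sigma$, this rules out graph-automorphism components. Nontrivial field-automorphism components are ruled out by comparing the prescribed spectrum of $g_\infty$ on $V$ (given by the downstairs characters $\rho_1,\ldots,\rho_m$ together with the wild eigenvalue) against the Galois action that a field automorphism would impose: the downstairs characters, being distinct by \autoref{basic_facts}(a), cannot be permuted nontrivially by such a Galois action in a way compatible with normalizing $\langle\overline{\sigma}\rangle$.

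The main obstacle will be step one: translating the codimension-$1$ fixed-space condition on an irreducible Weil representation into the conclusion that $\overline{\sigma}$ is a transvection in $\PSL_n(\mathbb{F}_q)$ (and not some exotic $p$-element in $\Aut(\PSL_n(\mathbb{F}_q))\setminus \PGL_n(\mathbb{F}_q)$). This requires the full Weil-representation machinery of Sections 2 and 3; once it is done, the normalizer argument in step two follows in the spirit of \autoref{basic_facts}(d).
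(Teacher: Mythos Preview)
Your approach is genuinely different from the paper's, and the gap is real. The paper's proof is a two-line element-order argument: since the $m=D-1$ downstairs characters are pairwise distinct (\autoref{basic_facts}(a)), $g_\infty$ has at least $D-1$ distinct eigenvalues on $\mathcal{H}$; since $D\in\{\frac{q^n-q}{q-1},\frac{q^n-1}{q-1}\}$, this forces $|g_\infty|\geq \frac{q^n-1}{q-1}-2$, and then the first part of the proof of \cite[Theorem 8.1]{KT2} (the same element-order bound that underlies \autoref{basic_facts}(d)) rules out field and graph components. No analysis of $P(\infty)$ is needed.

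Your route via the wild image $\sigma$ is attractive, but step two does not close. Grant that $\overline{\sigma}$ is (the image of) a transvection $\tau$ with flag $L\subset H$. The normalizer of $\langle\overline{\sigma}\rangle$ in $\Aut(\PSL_n(\mathbb{F}_q))$ is \emph{not} contained in $\PGL_n(\mathbb{F}_q)$: the graph automorphism sends $\tau$ to another transvection $\tau'$, and since all transvections are $\PSL_n(\mathbb{F}_q)$-conjugate, one can compose with an inner automorphism to land back in $\langle\tau\rangle$; likewise a field automorphism fixes $\tau$ outright whenever $\tau$ is defined over the fixed subfield. So ``$\overline{g_\infty}$ normalizes $\langle\overline{\sigma}\rangle$'' by itself does not force $\overline{g_\infty}\in\PGL_n(\mathbb{F}_q)$. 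Your further remarks (that $g_\infty$ preserves $\Wild$, or that a Galois action would permute the $\rho_i$) do not fill the hole: preserving the $1$-dimensional $\zeta$-eigenspace of $\sigma$ on $V$ is automatic for any element of $G$ normalizing $\langle\sigma\rangle$, inner or not, and there is no mechanism here tying a putative field-automorphism component of $\overline{g_\infty}$ to a nontrivial permutation of the downstairs characters. Step one is also not supplied by Sections~2--3 of the paper, which treat only elements of $\GL_n(\mathbb{F}_q)$; showing that a $p$-element of $\Aut(\PSL_n(\mathbb{F}_q))$ with codimension-$1$ fixed space on a Weil representation must be an inner transvection requires a separate argument about field-automorphism $p$-elements (and, when $p=2$, graph components).

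In short: the paper bypasses all of this by observing that $g_\infty$ already has enough distinct eigenvalues to invoke the existing large-order criterion from \cite{KT2}. If you want to salvage your approach, you would ultimately still need an order or spectrum bound on $g_\infty$ itself to exclude the outer-automorphism elements in $N_{\Aut(\PSL_n(\mathbb{F}_q))}(\langle\overline{\sigma}\rangle)$, at which point the detour through $\sigma$ is no longer doing work.
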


\begin{proof}
The spectrum of $g_\infty$ on $\mathcal{H}$ is the union of the spectrum on $\Tame$ and that on $\Wild$. The spectrum on $\Tame$ corresponds to the upstairs characters, so in particular $g_\infty$ has at least $m = D-1$ distinct eigenvalues. Since the restriction of the monodromy representation is an irreducible Weil representation of $\SL_n(\mathbb{F}_q)$, $D$ is either $\frac{q^n-q}{q-1}$ or $\frac{q^n-1}{q-1}$. Therefore, the order of $g_\infty$ is at least $\frac{q^n-1}{q-1}-2$. Now we can apply the first part of the proof of \cite[Theorem 8.1]{KT2}. 
\end{proof}

Instead of excluding all the exceptional pairs of $(n,q)$ in \autoref{basic_facts} and \autoref{D-m=1}, we want to include those hypergeometric sheaves which satisfies the conclusions of the above propositions. Therefore, we will study the hypergeometric sheaves $\mathcal{H}$ with the geometric monodromy group $G$ which satisfies the following conditions:
\begin{equation}
\left(\begin{array}{l}
\mathcal{H}\text{ is irreducible and lisse on }\mathbb{G}_m/K\text{ for a finite extension }K/\mathbb{F}_p.\\G\text{ is finite, almost quasisimple with unique composition factor } \PSL_n(\mathbb{F}_q)\text{ for some}\\\text{integer }n\geq 3\text{ and a power }q\text{ of }p.\\\text{The images of }\gamma_0,\gamma_\infty\text{ under the map }\pi_1^{et}(\mathbb{G}_m) \rightarrow  G \rightarrow  G/\mathbf{Z}(G)\text{ are in }\PGL_n(\mathbb{F}_q).\\\text{The quasisimple layer }E(G)\text{ is a quotient of }\SL_n(\mathbb{F}_q)\text{, and the restriction of the}\\\text{monodromy representation of }\mathcal{H}\text{ to }E(G)\text{ is an irreducible Weil representation.}
\end{array} \right)\label{hyp_condition}\tag{$\star$}
\end{equation}
By \autoref{basic_facts} and \autoref{D-m=1}, except when $(n,q)=(3,2),(3,3),(3,4),(4,2)$, the last two conditions in \eqref{hyp_condition} and that $K$ has characteristic $p$ are redundant, and \eqref{hyp_condition} is equivalent to:
\begin{equation*}
\left(\begin{array}{l}
\mathcal{H}\text{ is irreducible and lisse on }\mathbb{G}_m/K\text{ for a finite field }K.\\G\text{ is finite, almost quasisimple with unique composition factor }\PSL_n(\mathbb{F}_q)\text{ for some}\\\text{integer }n\geq 3\text{ and a power }q\text{ of }p.
\end{array} \right)
\end{equation*}

\begin{rmk}\label{remark_sec1}
In the situation of \eqref{hyp_condition}, let $g_0, g_\infty$ be the images of $\gamma_0,\gamma_\infty$ in $G$ under the monodromy representation, and let $\overline{g_0}, \overline{g_\infty}$ be their images in $G/\mathbf{Z}(G)\leq \Aut(\PSL_n(\mathbb{F}_q))$, so that $\overline{g_0}, \overline{g_\infty}\in \PGL_n(\mathbb{F}_q)$. The monodromy representation gives a projective representation of $\PSL_n(\mathbb{F}_q)=E(G)/\mathbf{Z}(E(G))$, which comes from an irreducible Weil representation of $\SL_n(\mathbb{F}_q)$. Hence, if we take the restriction of the monodromy representation to the subgroup $\langle E(G),g_0, g_\infty\rangle$ of $G$, then this gives a projective representation of the subgroup $\langle E(G)/\mathbf{Z}(E(G)),\overline{g_0},\overline{g_\infty}\rangle \leq \PGL_n(\mathbb{F}_q)$. This can be lifted to an irreducible Weil representation of the corresponding subgroup $H$ of $\GL_n(\mathbb{F}_q)$. Therefore, the spectrum of $\gamma_0$ on $\mathcal{H}$ is just a root of unity times the spectrum of an inverse image $h_0\in H\leq \GL_n(\mathbb{F}_q)$ of $\overline{g_0}$ on this irreducible Weil representation. The same statement holds for $\gamma_\infty$ with another root of unity. 
\end{rmk}

\section{Weil Representations of Finite General Linear Groups}

Suppose that we have a hypergeometric sheaf $\mathcal{H}$ with the geometric monodromy group $G$ which satisfies \eqref{hyp_condition}. The discussions in section 1, including \autoref{wild_basic_properties} and \autoref{basic_facts}, tells us that the spectrum of $\gamma_0$ on $\mathcal{H}$ cannot have an eigenvalue with multiplicity larger than $1$, and the spectrum of $\gamma_\infty$ on each of $\Tame$ and $\Wild$ also have the same property. As we saw in \autoref{remark_sec1}, these spectra is just a root of unity times the spectra of some elements of $\GL_n(\mathbb{F}_q)$ on some irreducible Weil representation. In this section, we recall the definition of irreducible Weil representation, and fix some notations which will be used in later sections.

Fix a generator $\alpha$ of the cyclic group $\mathbb{F}_q^\times$, and a primitive $(q-1)$th root of unity $\lambda\in \overline{\mathbb{Q}_\ell}^\times$. Let $\eta$ be a multiplicative character of $\mathbb{F}_q$ that maps $\alpha$ to $\lambda$. Consider the natural (left) permutation action of $\GL_n(\mathbb{F}_q)$ on $\mathbb{F}_q^n$. The corresponding $\mathbb{C}\GL_n(\mathbb{F}_q)$-module is called the total Weil module. We will denote this by $\Weil$. It has a standard basis $\{e_v \mid v\in \mathbb{F}_q^n\}$, and each $g\in \GL_n(\mathbb{F}_q)$ acts by $e_v\mapsto e_{gv}$. For each $0\neq v\in \mathbb{F}_q^n$ and each $j=0,\dots, q-2$, define $v^{(j)} = \sum_{i=0}^{q-2}\lambda^{-ij}e_{\alpha^i v}$. Then $$(\alpha I).v^{(j)} = \sum_{i=0}^{q-2}\lambda^{-ij}(\alpha I).e_{\alpha^i v} = \sum_{i=0}^{q-2}\lambda^{-ij}e_{\alpha^{i+1}v} = \sum_{i=0}^{q-2}\lambda^{-(i-1)j}e_{\alpha^i v} = \lambda^j v^{(j)}.$$ In particular, the element $\alpha I\in \mathbf{Z}(\GL_n(\mathbb{F}_q))$ has an eigenvalue $\lambda^j$ on $\Weil$, and $v^{(j)}$ is an eigenvector associated to this eigenvalue. Note that $(\alpha v)^{(j)} = \lambda^j v^{(j)}$, so if we choose one nonzero $v$ from each one-dimensional subspace of $\mathbb{F}_q^n$, then the $v^{(j)}$'s for those $v$ together with $e_0$ form a basis of $\Weil$. Let $\Weil_j = \mathbb{C}\langle v^{(j)}\mid 0\neq v\in \mathbb{F}_q^n\rangle$. Then we get the direct sum decomposition $$\Weil = \mathbb{C}e_0 \oplus \bigoplus_{j=0}^{q-1}\Weil_j.$$ The submodules $\Weil_j$ have dimension $\frac{q^n-1}{q-1}$, and the restriction of $\Weil_j$ to $\mathbf{Z}(\GL_n(\mathbb{F}_q))$ is precisely $\frac{q^n-1}{q-1}\eta^j$; recall that $\eta$ is the linear character which maps $\alpha$ to $\lambda$. Moreover, $\Weil_j$ is irreducible unless $j=0$, in which case $\Weil_0 = \mathds{1} \oplus \Weil_0'$. The $\mathbb{C}\GL_n(\mathbb{F}_q)$-modules $\Weil_j$ and $\Weil_0'$, together with their tensor products with linear characters of $\GL_n(\mathbb{F}_q)$, are called the irreducible Weil modules. 

Fix an element $g$ of $\GL_n(\mathbb{F}_q)$. To study the spectrum of the action of $g$ on $\Weil_j$, it is convenient to make the following definitions. For a nonzero vector $v\in \mathbb{F}_q^n\setminus \{0\}$, let $s_v$ be the smallest positive integer which satisfies $g^{s_v}v = \alpha^{t}v$ for some integer $t$, and let $t_v$ be this $t$. For two nonzero vectors $v, w\in \mathbb{F}_q^n\setminus \{0\}$, we will say $v\sim_g w$ if $w = \beta g^rv$ for some $\beta\in \mathbb{F}_q^\times$ and some nonnegative integer $r$, that is, if $v$ and $w$ lie in the same $\langle g,\mathbf{Z}(\GL_n(\mathbb{F}_q))\rangle$-orbit. This defines an equivalence relation on $\mathbb{F}_q^n\setminus\{0\}$. Also, let $V(g;v)$ and $\Weil_j(g;v)$ denote the $g$-cyclic subspaces of $\mathbb{F}_q^n$ and $\Weil_j$ generated by $v$ and $v^{(j)}$, respectively. Then we can easily see the following properties.

\begin{lem}\label{equiv_relation}
Let $v,w\in \mathbb{F}_q^n\setminus \{0\}$.
\begin{enumerate}[label={\upshape{(\arabic*)}}]
\item $v\sim_g w$ if and only if $\Weil_j(g;v)=\Weil_j(g;w)$. Moreover, if $v\not\sim_g w$, then $\Weil_j(g;v)\cap \Weil_j(g;w)=0$.
\item $\Weil_j$ is the direct sum of subspaces of the form $\Weil_j(g;v)$, one for each equivalence class of $\sim_g$.
\item $s_v$ and $t_v$ only depends on the $\sim_g$-equivalence class of $v$.
\item The eigenvalues of the action of $g$ on $\Weil_j(g;v)$ are the $s_v$th roots of $\lambda^{t_vj}$.
\end{enumerate}
\end{lem}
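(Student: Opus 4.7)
The entire lemma would rest on two basic identities that follow directly from the definition of $v^{(j)}$: the $g$-equivariance
\[
g \cdot v^{(j)} \;=\; (gv)^{(j)},
\]
which is immediate from $g \cdot e_{\alpha^i v} = e_{\alpha^i(gv)}$, together with the scaling rule $(\alpha v)^{(j)} = \lambda^j v^{(j)}$ already computed in the text. Combining them, whenever $w = \alpha^k g^r v$ one gets $w^{(j)} = \lambda^{kj}\, g^r v^{(j)} \in \Weil_j(g;v)$. I would dispatch (3) first as a warm-up: for such $w$, commuting the central scalar $\alpha^k$ past $g^r$ and using $g^{s_v} v = \alpha^{t_v} v$ gives $g^{s_v} w = \alpha^{t_v} w$, so $s_w \leq s_v$; symmetry gives equality, and the same computation shows that the residue class of $t_v$ modulo $q-1$ depends only on $[v]$.

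For (1) and (2), the forward direction of (1) is the containment just displayed. For the converse and the disjointness claim, I would exploit the basis of $\Weil_j$ noted just before the lemma: any transversal $R$ for the $\mathbb{F}_q^\times$-orbits on $\mathbb{F}_q^n \setminus \{0\}$ gives a basis $\{v^{(j)} : v \in R\}$ of $\Weil_j$. I would choose $R$ so that $R \cap [v] = \{v, gv, \ldots, g^{s_v - 1} v\}$ for each $\sim_g$-class $[v]$; the identities above then show that $\{u^{(j)} : u \in R \cap [v]\}$ spans $\Weil_j(g;v)$, and these spans are linearly disjoint pieces of a single basis of $\Weil_j$. This yields simultaneously the converse in (1), the intersection statement in (1), and the direct sum decomposition (2).

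For (4), the vectors $v, gv, \ldots, g^{s_v - 1} v$ span pairwise distinct lines, since any collision $g^a v \in \mathbb{F}_q^\times \cdot g^b v$ with $0 \leq b < a < s_v$ would give $g^{a-b} v \in \mathbb{F}_q^\times \cdot v$, violating the minimality of $s_v$. Hence $v^{(j)}, g v^{(j)}, \ldots, g^{s_v - 1} v^{(j)}$ form a basis of $\Weil_j(g;v)$, and the relation $g^{s_v} v^{(j)} = (\alpha^{t_v} v)^{(j)} = \lambda^{t_v j} v^{(j)}$ shows that $g$ acts in this basis as the companion matrix of $X^{s_v} - \lambda^{t_v j}$, whose eigenvalues are exactly the $s_v$-th roots of $\lambda^{t_v j}$. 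No step looks like a real obstacle; the whole argument is bookkeeping organised around the two identities above and the known basis of $\Weil_j$. The one place where care is genuinely needed is the line-distinctness claim opening (4), as this is the only step that actually invokes the minimality built into the definition of $s_v$.
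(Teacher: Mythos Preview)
Your argument is correct and complete; the paper itself only writes out part (4), leaving (1)--(3) as routine, and you have supplied those details cleanly via the transversal $R$ adapted to the $\sim_g$-classes. For (4) the paper takes the dual viewpoint, exhibiting the explicit eigenvectors $\sum_{i=0}^{s_v-1}\xi^{-ij}g^i v^{(j)}$ for each $\xi$ with $\xi^{s_v}=\lambda^{t_v}$, whereas you recognise the companion matrix of $X^{s_v}-\lambda^{t_v j}$; these are equivalent, and your formulation has the small advantage of sidestepping any question about whether $\xi\mapsto\xi^j$ hits all $s_v$ roots of $\lambda^{t_v j}$.
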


\begin{proof}
For (4), let $\xi\in \mathbb{C}$ be a number such that $\xi^{s_v} = \lambda^t$. The vector $\sum_{i=0}^{s_v-1}\xi^{-ij}g^i.v^{(j)}\in \Weil_j(g;v)$ then satisfies 
\begin{align*}
g.\left(\sum_{i=0}^{s_v-1}\xi^{-ij}g^i.v^{(j)}\right) &= \sum_{i=0}^{s_v-1}\xi^{-ij}g^{i+1}.v^{(j)} = \sum_{i=1}^{s_v-1}\xi^{-(i-1)j}g^{i}.v^{(j)} + \xi^{-(s_v-1)j}g^{s_v}.v^{(j)} \\&= \sum_{i=1}^{s_v-1}\xi^{-(i-1)j}g^{i}.v^{(j)} + \xi^j\left(\sum_{k=0}^{q-2}\lambda^{-(k+t)j}e_{\alpha^{k+t}v}\right) \\&=  \sum_{i=1}^{s_v-1}\xi^{-(i-1)j}g^{i}.v^{(j)} + \xi^jv^{(j)} = \xi^{j}\sum_{i=0}^{s_v-1}\xi^{-ij}g^{i}.v^{(j)}.
\end{align*}
Therefore, this vector is an eigenvector of the action of $g$ on $\Weil_j(g;v)$ with eigenvalue $\xi^j$. Since we can choose $s_v$ distinct $\xi$'s and $\dim \Weil_j(g;v)=s_v$, these vectors form an eigenbasis of the action of $g$ on this cyclic subspace.

\end{proof}

Since $\gamma_0$ and $\gamma_\infty$ has pro-order prime to $p$, to find the possible spectra of $\gamma_0$ and $\gamma_\infty$ on $\mathcal{H}$ (up to multiplication by roots of unity), it is sufficient to use the following result.

\begin{thm}{\upshape{\cite[Proposition 10.3.6]{KT4}}}\label{m2sp_elts}
Suppose that the action of a $p'$-element $g\in \GL_n(\mathbb{F}_q)$ on $\Weil_j$ for some $j\in \{0,\dots,q-2\}$ has no eigenvalue of multiplicity larger than $2$. Then $g$ is one of the following.
\begin{enumerate}[label={\upshape{(\alph*)}}]
\item $g=\alpha_n^a$, where $\alpha_n$ is a generator of $\mathbb{F}_{q^n}^\times$ such that $\alpha_n^{\frac{q^n-1}{q-1}}=\alpha$, viewed as an element of $\GL_n(\mathbb{F}_q)$ via some embedding $\GL_1(\mathbb{F}_{q^n}) \hookrightarrow \GL_n(\mathbb{F}_q)$, and $a$ is an integer relatively prime to $|\alpha_n|/(q-1) = (q^n-1)/(q-1)$. The spectrum of the action of $g$ on $\Weil_j$ is the $\frac{q^n-1}{q-1}$th roots of $\lambda^{aj}$.
\item The squares of the elements described in (a), when $(q^n-1)/(q-1)$ is even.
\item $g = \alpha_m^b \oplus \alpha_{n-m}^c$. Here, $m$ is a positive integer relatively prime to $n$, and $\alpha_m$ and $\alpha_{n-m}$ are as in (a). $b$ is an integer relatively prime to $\frac{q^m-1}{q-1}$, $c$ is an integer relatively prime to $\frac{q^{n-m}-1}{q-1}$, and $b(n-m)-cm$ must be relatively prime to $q-1$. The spectrum of the action of $g$ on $\Weil_j$ is the $\frac{q^m-1}{q-1}$th roots of $\lambda^{bj}$, the $\frac{q^{n-m}-1}{q-1}$th roots of $\lambda^{cj}$, and the $\frac{(q^m-1)(q^{n-m}-1)}{q-1}$th roots of unity.
\end{enumerate}
\end{thm}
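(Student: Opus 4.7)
The plan is to use Lemma \ref{equiv_relation} to translate the eigenvalue-multiplicity bound on $g$ acting on $\Weil_j$ into a combinatorial condition about $\sim_g$-orbits on $\mathbb{F}_q^n \setminus \{0\}$, and then exploit the semisimplicity of $g$ to classify it. Since $g$ is a $p'$-element it is semisimple, so I would fix a decomposition $\mathbb{F}_q^n = \bigoplus_{i=1}^k V_i$ into $g$-irreducible subspaces of dimension $n_i$, with $g|_{V_i}$ acting as multiplication by some $\beta_i \in \mathbb{F}_{q^{n_i}}^\times$ satisfying $\mathbb{F}_q(\beta_i) = \mathbb{F}_{q^{n_i}}$.

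For each nonempty $S \subseteq \{1, \ldots, k\}$ I would analyze the $\sim_g$-orbits of vectors $v = (v_1, \ldots, v_k)$ whose support $\{i : v_i \neq 0\}$ equals $S$. Directly from the definitions, $s_v$ equals the smallest $s > 0$ for which there exists an integer $t$ with $\beta_i^s = \alpha^t$ for every $i \in S$, and then $t_v \equiv t \pmod{q-1}$; both depend only on $S$, not on the specific $v$. The number of $\sim_g$-orbits with support $S$ is $\prod_{i \in S}(q^{n_i}-1)/((q-1)\,s_v)$, and every such orbit contributes a cyclic subspace of dimension $s_v$ in $\Weil_j$ whose spectrum is the set of $s_v$-th roots of $\lambda^{t_v j}$ (with multiplicity one inside that subspace). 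Thus the multiplicity of any given eigenvalue in $\Weil_j$ is the total number of orbits, across all supports $S$, whose associated spectrum contains it.

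The classification then proceeds by case on $k$. For $k=1$, the action of $g$ is multiplication by some $\beta \in \mathbb{F}_{q^n}^\times$ generating $\mathbb{F}_{q^n}/\mathbb{F}_q$, and all orbits share a common spectrum, so the multiplicity bound forces $[\mathbb{F}_{q^n}^\times : \langle \alpha, \beta \rangle] \leq 2$; the index-$1$ and index-$2$ subcases (the latter needing $(q^n-1)/(q-1)$ even, so that the unique index-$2$ subgroup contains $\alpha$) are precisely (a) and (b). For $k=2$, write $n_1 = m$, $n_2 = n-m$, and let $N_\ell := (q^\ell-1)/(q-1)$. The single-orbit requirements for supports $\{1\}$ and $\{2\}$ respectively force $\gcd(b, N_m) = 1$ and $\gcd(c, N_{n-m}) = 1$ where $\beta_1 = \alpha_m^b$, $\beta_2 = \alpha_{n-m}^c$. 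For the mixed support $\{1,2\}$, the requirement $\gcd(m, n-m) = 1$ (equivalent to $\gcd(N_m, N_{n-m}) = 1$) is what keeps $s_v$ from refining further, and the resulting single-orbit condition translates to $\gcd(b(n-m) - cm,\, q-1) = 1$ since $N_\ell \equiv \ell \pmod{q-1}$. Checking that the three spectra thus produced have eigenvalue multiplicities at most $2$ in $\Weil_j$ then confirms case (c).

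The final and hardest step is ruling out $k \geq 3$. I would argue that the fully mixed support $S = \{1, \ldots, k\}$ already produces many orbits sharing one common spectrum, which consists entirely of roots of unity (since $t_v \equiv 0 \pmod{q-1}$ after enough multiplications force $\beta_i^{s_v}$ into $\langle \alpha \rangle$ for all $i$), and this spectrum overlaps substantially with the spectra of smaller-support orbits, so multiplicity on the common roots of unity accumulates past $2$. A secondary subtlety is the possibility that several $V_i$ are $g$-isomorphic, in which case the direct-sum decomposition is not unique and a continuous family of interpolated decompositions appears; I would treat this by grouping isomorphic summands and observing that such configurations only inflate orbit counts on shared spectra, making the multiplicity bound fail even more strongly. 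The central obstacle is organizing this $k \geq 3$ elimination uniformly across all arrangements of irreducible summands and all arithmetic relations among the $\beta_i$.
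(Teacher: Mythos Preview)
The paper does not give a full proof of this theorem; it cites \cite[Proposition 10.3.6]{KT4} and then, in the Remark immediately following the statement, sketches an alternative elementary argument based on \autoref{equiv_relation}. Your proposal is in the spirit of that alternative sketch rather than of the cited \cite{KT4} proof (which the paper describes as short but dependent on deeper results).

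Your organization differs from the paper's sketch in a meaningful way. You begin by invoking semisimplicity of the $p'$-element $g$ to fix an irreducible decomposition $\mathbb{F}_q^n=\bigoplus_i V_i$ and then stratify $\sim_g$-orbits by their support in this decomposition. The paper's sketch instead works intrinsically with the partially ordered set of $g$-cyclic subspaces $V(g;v)$: it argues that the multiplicity-$\le 2$ hypothesis forces the maximal $g$-cyclic subspaces to intersect trivially and partition $\mathbb{F}_q^n\setminus\{0\}$, and then a counting argument shows that in fact $\mathbb{F}_q^n$ itself is $g$-cyclic with at most three $\sim_g$-classes. This single conclusion already encodes the coprimality conditions (cyclic means the minimal polynomial equals the characteristic polynomial, so no repeated irreducible factors) and bounds the number of summands, so your hard step of eliminating $k\ge 3$ and your ``secondary subtlety'' of isomorphic summands are both absorbed into one clean statement. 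The price you pay for going through the irreducible decomposition is that the argument is tied to semisimplicity; the paper explicitly points out that its $g$-cyclic-subspace approach also classifies the $p$-singular elements with the same spectral property, which your route cannot do.

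There is no genuine error in your outline, but your $k\ge 3$ step is left vague in exactly the place where the paper's formulation gives a sharper handle: rather than arguing loosely that ``multiplicity accumulates past $2$'' on roots of unity, you could instead show directly that three or more $\sim_g$-classes with mutually non-nested spectra are impossible, which is closer to how the paper's sketch proceeds.
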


\begin{rmk}
Although the proof of \autoref{m2sp_elts} in \cite{KT4} is short, it relies on deeper results. Here I give a sketch of an alternative proof of this result using \autoref{equiv_relation}. This proof is longer than that of \cite{KT4}, but it has advantages of being elementary and more general, allowing us to also classify the $p$-singular elements with such spectrum on the irreducible Weil representations.

\autoref{equiv_relation} implies that on each $\Weil_j(g;v)$, the action of $g$ has no repeated eigenvalues. Repeated eigenvalues can only occur by appearing in more than one $g$-cyclic subspaces; this is equivalent to saying that there exists $v\not\sim_g w\in \mathbb{F}_q^n\setminus\{0\}$ such that a $s_v$th root of $\lambda^{t_vj}$ is also a $s_w$th root of $\lambda^{t_wj}$. This imposes strong restrictions on the partially ordered set of $g$-cyclic subspaces (ordered by inclusion) if we assume that the action of $g$ has no eigenvalues of multiplicity larger than $2$. In this case, one can easily see that the maximal $g$-cyclic subspaces of $\mathbb{F}_q^n$ must be intersecting trivially, and if we remove $0$ from each of them, then they form a partition of $\mathbb{F}_q^n\setminus\{0\}$. The above observation about $s_v$ and $t_v$ also gives an upper bound for the number of $g$-cyclic subspaces of each dimension. By counting the number of elements in the $g$-cyclic subspaces and using the above observation again, one can show that $\mathbb{F}_q^n$ is itself $g$-cyclic, and there are at most three distinct $\sim_g$-equivalence classes. Using this, one can classify all elements of $\GL_n(\mathbb{F}_q)$ whose action on an irreducible Weil representation has no eigenvalue of multiplicity larger than $2$. 
\end{rmk}

\section{The Action of Wild Inertia Subgroup}

Let $\mathcal{H}$ be an irreducible hypergeometric sheaf with the geometric monodromy group $G=G_{\geom}$ which satisfies \eqref{hyp_condition}. The goal of this section is to completely determine the possible sets of upstairs characters and downstairs characters $\mathcal{H}$ can have. Equivalently, we want to find all possible spectra of the action of $\gamma_0$ on $\mathcal{H}$ and the action of $\gamma_\infty$ on $\Tame$ and $\Wild$. In \autoref{gamma_0_gamma_infty}, we will see that \autoref{m2sp_elts} already gives the answer for $\gamma_0$, but for $\gamma_\infty$, it only gives the possible spectra of the action on $\mathcal{H}=\Tame\oplus \Wild$. To see how these spectra splits into the spectra on $\Tame$ and $\Wild$, we will need to see how the action of $\gamma_\infty$ interacts with the action of $P(\infty)$. 

Let $g_0$ and $g_\infty$ be the images of $\gamma_0$ and $\gamma_\infty$ in $G$. Let $\overline{g_0}$ and $\overline{g_\infty}$ be the images of $g_0$ and $g_\infty$ in $G/\mathbf{Z}(G)$. By \eqref{hyp_condition}, $\overline{g_0},\overline{g_\infty}\in \PGL_n(\mathbb{F}_q)$. We can choose representatives $h_0$ and $h_\infty$ of $\overline{g_0}$ and $\overline{g_\infty}$, respectively, in $\GL_n(\mathbb{F}_q)$. Since $\gamma_0$ and $\gamma_\infty$ have pro-order prime to $p$, both $h_0$ and $h_\infty$ also have order prime to $p$. As explained in \autoref{remark_sec1}, the spectrum of $\gamma_0$ is a root of unity times the spectrum of $h_0$ on an irreducible Weil representation, and the same holds for $\gamma_\infty$ and $h_\infty$.

\begin{prop}\label{gamma_0_gamma_infty}
\begin{enumerate}[label={\upshape{(\arabic*)}}]
\item $h_0$ is an element described in \autoref{m2sp_elts}(a).
\item If $\dim \Wild>1$, then $h_\infty$ is an element described in \autoref{m2sp_elts}(c).
\item If $\dim \Wild=1$, then $h_\infty$ is an element described in \autoref{m2sp_elts}(a).
\end{enumerate}
\end{prop}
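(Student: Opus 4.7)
By \autoref{remark_sec1}, the spectrum of $\gamma_0$ (respectively of $\gamma_\infty$) on $\mathcal{H}$ coincides, up to a root-of-unity scalar, with the spectrum of $h_0$ (respectively $h_\infty$) on an irreducible Weil module of $\GL_n(\mathbb{F}_q)$. The opening paragraph of this section records that the spectrum of $\gamma_0$ on $\mathcal{H}$ is simple, while the spectra of $\gamma_\infty$ on $\Tame$ and on $\Wild$ are each simple, so the spectrum of $\gamma_\infty$ on $\mathcal{H} = \Tame \oplus \Wild$ has multiplicity at most $2$. Hence \autoref{m2sp_elts} applies to both $h_0$ and $h_\infty$, forcing each to be of one of the types (a), (b), (c) listed there. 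The proposition then reduces to deciding which type is consistent with the further constraints of \autoref{wild_basic_properties} and the simplicity of the $\Tame$ and $\Wild$ spectra.

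For (1), I would eliminate types (b) and (c) for $h_0$. A type (b) element produces every eigenvalue with multiplicity exactly $2$ on $\Weil_j$ with $j\neq 0$ (and all but one on $\Weil_0'$), already violating multiplicity $\leq 1$. For type (c), writing the three spectrum components $G_1, G_2, G_3$ from \autoref{m2sp_elts}(c), one checks $G_1, G_2 \subseteq G_3$: this reduces to verifying $\lambda^{bj}, \lambda^{cj} \in \mu_{(q^{m'}-1)(q^{n-m'}-1)/(q-1)}$, which follows from the factorization $q^k - 1 = (q-1)(1 + q + \cdots + q^{k-1})$ giving $(q-1)^2 \mid (q^{m'}-1)(q^{n-m'}-1)$. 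The inclusion produces a repeated eigenvalue, contradicting the hypothesis and ruling out type (c).

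For (3), the $\Tame$ part contributes $m = D - 1$ distinct eigenvalues of $\gamma_\infty$, so $h_\infty$ must act with at least $D - 1$ distinct eigenvalues on the Weil module. A type (b) element has at most $\lceil D/2 \rceil$ distinct eigenvalues, which is less than $D - 1$ since $D \geq (q^n - q)/(q-1) \geq 6$ for $n \geq 3$. A type (c) element has exactly $|G_3| = (q^{m'}-1)(q^{n-m'}-1)/(q-1)$ distinct eigenvalues, and an elementary estimate gives $D - 1 - |G_3| \geq q(q^{n-2}-1)/(q-1) > 0$ for all valid $m'$ when $n \geq 3$. This forces $h_\infty$ to be type (a).

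For (2) I must exclude both (a) and (b) to force type (c). Type (b) is eliminated by the same counting argument whenever $m > D/2$; in the residual subcase $m \leq D/2$, the paired structure of a type (b) spectrum combined with the simplicity of the $\Tame$ and $\Wild$ spectra forces every $\Tame$ eigenvalue to also appear in $\Wild$, which is incompatible with the rigid description in \autoref{wild_basic_properties}. Type (a) is the most delicate: the spectrum of $h_\infty$ would be a single coset $\xi \cdot \mu_D$ (up to one point), while by \autoref{wild_basic_properties} the $\Wild$ spectrum consists of the $W_0$th roots of $\zeta \cdot (\mu_{p^a+1} \setminus \{1\})$ when $a > 0$, where $\dim \Wild = p^a W_0$ with $p \nmid W_0$. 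Taking $W_0$th powers, containment inside $\xi \cdot \mu_D$ forces the divisibility $(p^a + 1) \mid D/\gcd(D, W_0)$, which fails under our hypotheses since $D = (q^n-1)/(q-1)$ or $(q^n-q)/(q-1)$ is coprime to $p$ and an arithmetic check rules out $p^a + 1$ dividing the relevant factor; the $a = 0$, $W_0 > 1$ case is handled by a parallel mismatch between $W_0$ and $D$. With (a) and (b) eliminated, $h_\infty$ must be type (c). The main obstacle is this last step, the type (a) elimination in (2): unlike the other cases, it cannot be decided by counting distinct eigenvalues alone and requires exploiting the multiplicative structure that \autoref{wild_basic_properties} imposes on the $\Wild$ spectrum.
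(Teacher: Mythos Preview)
Your arguments for parts (1) and (3) are correct and in the same spirit as the paper's: both rely on the observation that types (b) and (c) in \autoref{m2sp_elts} produce too many eigenvalues of multiplicity $2$. The paper states this more tersely (``(a) is the only case with simple spectrum'' for (1), and ``types (b) and (c) have more than one eigenvalue with multiplicity $2$'' for (3)), but your more explicit counting reaches the same conclusion.

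Part (2) is where your approach diverges from the paper's, and where there is a genuine gap. You attempt to exclude type (a) by extracting a divisibility condition $(p^a+1)\mid D/\gcd(D,W_0)$ from \autoref{wild_basic_properties}(iv) and then asserting it fails; the $a=0$ case is dismissed as a ``parallel mismatch.'' Neither assertion is justified, and in fact the $a=0$ analysis reduces to asking whether $W_0 = D-m$ can divide $D$, which it certainly can for suitable $m$. So the spectrum-only route does not close. The paper bypasses this entirely with a structural argument (the forward reference to \autoref{R}(c)): when $\dim\Wild>1$, the image of $P(\infty)$ lifts to a nontrivial $p$-subgroup $R\leq\GL_n(\mathbb{F}_q)$ normalized by $h_\infty$; any such $R$ has a nontrivial proper fixed subspace $(\mathbb{F}_q^n)^R$, and $h_\infty$ must stabilize it. Elements of types (a) and (b) come from $\mathbb{F}_{q^n}^\times$ and act irreducibly on $\mathbb{F}_q^n$, so they are excluded in one stroke, leaving only type (c). The missing idea in your proposal is precisely this use of the $p$-group image of wild inertia, rather than its effect on eigenvalues.
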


\begin{proof}
Since the upstairs characters are pairwise distinct, the action of $\gamma_0$ on $\mathcal{H}$ has simple spectrum. Hence, the action of $h_0$ on $\mathcal{H}$ also has simple spectrum. Among the elements described in \autoref{m2sp_elts}, (a) is the only case with simple spectrum. For $h_\infty$, we know that the downstairs characters are pairwise distinct, so the action on $\Tame$ has simple spectrum. By \autoref{wild_basic_properties}, the action on $\Wild$ also has simple spectrum. Therefore, $h_\infty$ must be as in \autoref{m2sp_elts}. If in addition $\dim \Wild>1$, then we will see in \autoref{R}(c) below that $h_\infty$ stabilizes a nontrivial proper subspace of $\mathbb{F}_q^n$. Only (c) of \autoref{m2sp_elts} has this property.

If $\dim \Wild = 1$, then $h_\infty$ has at most one eigenvalue with multiplicity $2$ on an irreducible Weil representation, and all other eigenvalues have multiplicity $1$. The elements in \autoref{m2sp_elts}(b) and (c) have more than one eigenvalues with multiplicity $2$ on every irreducible Weil representation, so $h_\infty$ must be as in \autoref{m2sp_elts}(a).
\end{proof}

For the rest of this section, assume that $\dim \Wild >1$, so that $G/\mathbf{Z}(G)\cong \PGL_n(\mathbb{F}_q)$ by \autoref{basic_facts}(d). For the exceptional cases $(n,q)=(3,2),(3,3),(3,4)$ where we cannot directly apply \autoref{basic_facts}(d), we still have this property since \autoref{gamma_0_gamma_infty}(1) and the assumption $\dim \Wild>1$ are sufficient to make the proof of \cite[Corollary 8.4]{KT2} valid. Let $J$ and $Q$ be the images of $I(\infty)$ and $P(\infty)$, respectively, in $G$. Let $\overline{J}$ and $\overline{Q}$ be their image in $\PGL_n(\mathbb{F}_q)=G/\mathbf{Z}(G)$. Let $\tilde{J}$ be the preimage of $\overline{J}$ in $\GL_n(\mathbb{F}_q)$, and let $R$ be a Sylow $p$-subgroup of the preimage of $\overline{Q}$ in $\tilde{J}$. Since $P(\infty)$ is a pro-$p$-group, $Q$ and $\overline{Q}$ are $p$-groups.

\begin{lem}\label{R}
\begin{enumerate}[label={\upshape{(\alph*)}}]
\item $R$ is a nontrivial normal Sylow $p$-subgroup of $\tilde{J}$, and $R\cong \overline{Q}$.
\item The subspace $(\mathbb{F}_q^n)^R$ of points fixed by $R$ is nontrivial and proper.
\item There is a nontrivial proper $h_\infty$-stable subspace of $\mathbb{F}_q^n$.
\end{enumerate}
\end{lem}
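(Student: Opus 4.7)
The plan is to first show $\overline{Q}$ is nontrivial; once that is in hand, the rest falls out of basic facts about Sylow subgroups and unipotent subgroups of $\GL_n(\mathbb{F}_q)$ in defining characteristic. Nontriviality of $\overline{Q}$ I would argue by contradiction: if $Q \subseteq \mathbf{Z}(G)$, then by Schur's lemma applied to the irreducible $\mathcal{H}$, $Q$ acts on $\Wild$ by a single scalar character $\chi$. Either $\chi$ is trivial, and then $P(\infty)$ acts trivially on $\Wild$, contradicting that $\Wild$ has Swan conductor $1$; or $\chi$ is a nontrivial character of the pro-$p$ group $P(\infty)$, which then has Swan conductor $\geq 1$, so the Swan conductor of $\Wild$ (a direct sum of $\dim \Wild$ copies of $\chi$) is $\geq \dim \Wild \geq 2$, again contradicting $\mathrm{Swan}(\Wild) = 1$.

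For (a), let $\tilde{Q}$ denote the preimage of $\overline{Q}$ in $\tilde{J}$. The kernel of the projection $\tilde{J} \twoheadrightarrow \overline{J}$ is the scalar center $\mathbf{Z}(\GL_n(\mathbb{F}_q))$, which has order $q-1$, coprime to $p$. Thus $|\tilde{Q}| = (q-1)|\overline{Q}|$ with the two factors coprime, so $\tilde{Q}$ has a unique Sylow $p$-subgroup $R$, which projects isomorphically onto $\overline{Q}$. Normality of $P(\infty)$ in $I(\infty)$ passes to $\overline{Q} \trianglelefteq \overline{J}$ and hence to $\tilde{Q} \trianglelefteq \tilde{J}$, so the characteristic subgroup $R$ of $\tilde{Q}$ is normal in $\tilde{J}$. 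Since $J/Q$ is a cyclic $p'$-group (generated by the image of the $p'$-element $\gamma_\infty$), $Q$ is Sylow $p$ in $J$, hence $\overline{Q}$ is Sylow $p$ in $\overline{J}$, and the $p'$-order of $\mathbf{Z}(\GL_n(\mathbb{F}_q))$ then gives $R$ Sylow $p$ in $\tilde{J}$.

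For (b), $R$ is a nontrivial $p$-subgroup of $\GL_n(\mathbb{F}_q)$, hence unipotent (any $p$-element in characteristic $p$ is unipotent), so the standard orbit-counting argument on $\mathbb{F}_q^n$ yields $(\mathbb{F}_q^n)^R \neq 0$; properness is equivalent to $R \neq 1$. For (c), the $R$-fixed subspace is automatically preserved by any element normalizing $R$: for $v \in (\mathbb{F}_q^n)^R$ and $g$ normalizing $R$, one has $r(gv) = g(g^{-1}rg)v = gv$. By (a), $h_\infty \in \tilde{J}$ normalizes $R$, so $(\mathbb{F}_q^n)^R$ is an $h_\infty$-stable subspace of the required type. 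The only conceptually nontrivial step is the Swan-conductor argument needed to rule out $Q \subseteq \mathbf{Z}(G)$; everything else is routine group theory.
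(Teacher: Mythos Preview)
Your argument is correct and matches the paper's approach; the paper simply cites \cite{KRLT4} for $Q \not\leq \mathbf{Z}(G)$ where you supply the Swan-conductor argument directly (this step implicitly uses that the central character extends to a character of $I(\infty)$, which it does since $Q$ would then be central in $J$, and then Hasse--Arf to get the integral lower bound $\geq 1$). One small point: coprimality of $q-1$ and $|\overline{Q}|$ alone does not force a \emph{unique} Sylow $p$-subgroup of $\tilde{Q}$---you also need that the kernel $\mathbf{Z}(\GL_n(\mathbb{F}_q))$ is central, which you have already observed.
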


\begin{proof}
(a) Since $P(\infty)$ is a normal subgroup of $I(\infty)$ such that $I(\infty)/P(\infty)$ has pro-order prime to $p$, $Q$ is a normal Sylow subgroup of $J$. Also, since the wild part of $\mathcal{H}$ is nontrivial, $Q$ cannot be trivial. In fact, $Q\not\leq \mathbf{Z}(G_{\geom})$ by \cite{KRLT4}, so $\overline{Q}$ is a nontrivial normal Sylow subgroup of $\overline{J}$. Since $|\mathbf{Z}(\GL_n(\mathbb{F}_q))|=q-1$ is relatively prime to $p$, it follows that $|\tilde{J}|_p=|\overline{J}|_p=|\overline{Q}|$. 

Let $\tilde{Q}$ be the preimage of $\overline{Q}$ in $\tilde{J}$. Then $|\tilde{Q}| = (q-1)|\overline{Q}|$, so $|R|=|\tilde{Q}|_p=|\overline{Q}|$. Therefore, $R$ is a nontrivial Sylow $p$-subgroup of $\tilde{J}$, and the quotient map $\GL_n(\mathbb{F}_q)\rightarrow \PGL_n(\mathbb{F}_q)$ restricts to an isomorphism $R\rightarrow Q$. To see that $R$ is normal in $\tilde{J}$, note that $\tilde{Q} = R\mathbf{Z}(\GL_n(\mathbb{F}_q))$. Since $\mathbf{Z}(\GL_n(\mathbb{F}_q))$ normalizes $R$, $R$ is the normal Sylow $p$-subgroup of $\tilde{Q}$. In particular, $R$ is characteristic in $\tilde{Q}$, which is a normal subgroup of $\tilde{J}$ since $\overline{Q}$ is normal in $\overline{J}$. Therefore, $R$ is normal in $\tilde{J}$.

(b) The subgroup of upper triangular matrices with $1$'s on the diagonal is a Sylow $p$-subgroup of $\GL_n(\mathbb{F}_q)$, and it is obvious that every nontrivial subgroup of this Sylow $p$-subgroup has this property. Since $R$ is a nontrivial $p$-subgroup of $\GL_n(\mathbb{F}_q)$, it is conjugate to a nontrivial subgroup of this Sylow $p$-subgroup, so it also has this property.

(c) If $u\in (\mathbb{F}_q^n)^R$, then for any $r\in R$, we have $rh_\infty u= h_\infty(h_\infty^{-1}rh_\infty)u = h_\infty u$ since $h_\infty^{-1}rh_\infty\in R^{h_\infty} = R$. Therefore $h_\infty (\mathbb{F}_q^n)^R = (\mathbb{F}_q^n)^R$.
\end{proof}

This completes the proof of \autoref{gamma_0_gamma_infty}(b), so $h_\infty$ is as in \autoref{m2sp_elts}(c). To see which part of the spectrum of $h_\infty$ comes from the tame part, we will use \autoref{wild_basic_properties} and compare it to certain $\mathbb{C}R$-submodules of irreducible Weil modules $\Weil_j$ which we introduce in the next proposition. To see how these submodules interact with $h_\infty$, it is enough to work with a larger elementary abelian $p$-group containing $R$. For our $h_\infty$, choose $v, w\in \mathbb{F}_q^n\setminus\{0\}$ and $b,c, m\in \mathbb{Z}$ as in \autoref{m2sp_elts}(c), so that in the notation of section 2, $$h_\infty=\alpha_m^b \oplus \alpha_{n-m}^c\in \GL(V(h_\infty;v))\oplus \GL(V(h_\infty;w))\subset \GL_n(\mathbb{F}_q).$$ By exchanging $v$ and $w$ if necessary, we can assume that $(\mathbb{F}_q^n)^R=V(h_\infty;v)$. Define
\begin{align*}
E :&= \{r\in \GL_n(\mathbb{F}_q)\mid r\text{ acts trivially on both }V(h_\infty;v)\text{ and }\mathbb{F}_q^n/V(h_\infty;v)\}\\&= \left\lbrace \begin{pmatrix} I & X \\ 0 & I\end{pmatrix}\mid X\in M_{m\times(n-m)}(\mathbb{F}_q)\right\rbrace.
\end{align*}
Note that $E$ is an elementary abelian $p$-group containing $R$, and that $h_\infty$ normalizes $E$.

\begin{prop}\label{permute}
There is a set of $\frac{(q^m-1)(q^{n-m}-1)}{q-1}$ one-dimensional $\mathbb{C}E$-submodules of $\Weil_j$ which is cyclically permuted by $h_\infty$. If $j=0$, they are all contained in $\Weil_0'$. Also, if we choose one nonzero vector from each of them, then they are linearly independent.
\end{prop}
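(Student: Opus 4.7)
The plan is to exhibit the required $1$-dimensional submodules inside a natural $E$-equivariant decomposition of $\Weil_j$ by ``fibers over $V(h_\infty;w)$,'' then verify the three claimed properties.

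First I would set up the decomposition. Writing a vector as $u_1 + u_2$ with $u_1 \in V(h_\infty;v)$ and $u_2 \in V(h_\infty;w)$, one has $r_X\cdot(u_1+u_2)^{(j)} = ((u_1 + Xu_2) + u_2)^{(j)}$ for any $r_X = \begin{pmatrix} I & X \\ 0 & I\end{pmatrix}\in E$. For each line $L \subset V(h_\infty;w)$ and any $v_2 \in L \setminus \{0\}$, define $F_L := \mathbb{C}\langle(u_1+v_2)^{(j)} \mid u_1 \in V(h_\infty;v)\rangle$. The relation $(\alpha v)^{(j)} = \lambda^j v^{(j)}$ shows $F_L$ is independent of the choice of $v_2$, the $q^m$ basis vectors $(u_1+v_2)^{(j)}$ lie on distinct lines of $\mathbb{F}_q^n$ (so $\dim F_L = q^m$), and for distinct $L$ the supporting line-basis vectors are disjoint, hence the $F_L$'s are in direct sum. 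Since $v_2 \neq 0$, the map $r_X \mapsto Xv_2$ is a surjection $E \twoheadrightarrow V(h_\infty;v) = \mathbb{F}_q^m$ through which $E$ acts on $F_L$ as the regular representation, giving $F_L = \bigoplus_{\tilde\chi} F_L^{\tilde\chi}$, one $1$-dimensional eigenspace for each additive character $\tilde\chi$ of $\mathbb{F}_q^m$, with $F_L^{\tilde\chi}$ spanned by $\xi_{L,v_2,\tilde\chi} := \sum_{u_1}\tilde\chi(u_1)^{-1}(u_1+v_2)^{(j)}$.

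Let $\mathcal{M} := \{F_L^{\tilde\chi} \mid \tilde\chi \neq 1\}$. Then $|\mathcal{M}| = (q^m-1)(q^{n-m}-1)/(q-1)$, and the double direct-sum decomposition forces the linear independence claim. For $j = 0$, the trivial $\GL_n(\mathbb{F}_q)$-submodule $\mathds{1} \subset \Weil_0$ is spanned by the sum of all line-basis vectors, and the coefficient-sum of $\xi_{L,v_2,\tilde\chi}$ is $\sum_{u_1}\tilde\chi(u_1)^{-1} = 0$ for nontrivial $\tilde\chi$ by character orthogonality, placing $\xi_{L,v_2,\tilde\chi}$ in $\Weil_0'$. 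Next, using $h_\infty = \alpha_m^b \oplus \alpha_{n-m}^c$, direct computation followed by the substitution $u_1 \mapsto \alpha_m^{-b} u_1$ yields $h_\infty \xi_{L,v_2,\tilde\chi} = \xi_{\alpha_{n-m}^c L,\, \alpha_{n-m}^c v_2,\, y \mapsto \tilde\chi(\alpha_m^{-b} y)}$, so $h_\infty$ permutes $\mathcal{M}$.

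To show this permutation is a single cycle, I would track the $h_\infty$-orbit of a pair $(v_2, \tilde\chi)$ modulo the equivalence $(v_2, \tilde\chi) \sim (\gamma v_2, y \mapsto \tilde\chi(\gamma^{-1} y))$ for $\gamma \in \mathbb{F}_q^\times$ coming from the choice of representative of $L$. Setting $a_1 := (q^m-1)/(q-1)$, $a_2 := (q^{n-m}-1)/(q-1)$, the condition $h_\infty^k(v_2,\tilde\chi) \sim (v_2,\tilde\chi)$ forces both $\alpha_{n-m}^{ck}$ and $\alpha_m^{bk}$ to lie in $\mathbb{F}_q^\times$ with a common value $\gamma$; using the coprimalities $\gcd(c,a_2) = \gcd(b,a_1) = 1$ from \autoref{m2sp_elts}(c), this reduces to $k = k_1 a_1 a_2$ with $k_1(ba_2 - ca_1) \equiv 0 \pmod{q-1}$.

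The main obstacle is to conclude that the smallest such $k$ equals $(q-1)a_1 a_2 = |\mathcal{M}|$. This requires two arithmetic inputs. First, $\gcd(a_1, a_2) = 1$: any common prime $p$ divides $\gcd(q^m-1, q^{n-m}-1) = q^{\gcd(m,n-m)}-1 = q-1$ (using $\gcd(m,n) = 1$ from \autoref{m2sp_elts}(c), which gives $\gcd(m, n-m) = 1$), and then the congruences $a_1 \equiv m$ and $a_2 \equiv n-m \pmod{q-1}$ force $p \mid \gcd(m, n-m) = 1$, a contradiction; this is needed to deduce $k_0 \equiv 0 \pmod{a_1}$ (hence $k = k_1 a_1 a_2$) when passing from $bk_0 a_2 \equiv 0 \pmod{a_1}$. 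Second, the same congruences give $ba_2 - ca_1 \equiv b(n-m) - cm \pmod{q-1}$, which is coprime to $q-1$ by the final hypothesis of \autoref{m2sp_elts}(c); this yields minimal $k_1 = q-1$, completing the proof.
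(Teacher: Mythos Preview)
Your argument is correct and follows essentially the same approach as the paper: the eigenvectors $\xi_{L,v_2,\tilde\chi}$ are exactly the paper's $y_{\varphi,j}$ (with $y=v_2$ and $\varphi=\tilde\chi^{-1}$), and the single-cycle verification is the same arithmetic carried out with slightly different bookkeeping. Two minor remarks: your argument for the $j=0$ containment is by an explicit coefficient-sum computation whereas the paper observes the $E$-action is nontrivial, and you supply a proof of $\gcd\bigl(\tfrac{q^m-1}{q-1},\tfrac{q^{n-m}-1}{q-1}\bigr)=1$ that the paper uses without justification (though you should avoid reusing the letter $p$ for a generic prime, since $p$ is already the characteristic).
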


\begin{proof}
Let $v, w, b, c, m$ be as above. We may view $V(h_\infty;v)$ as an additive elementary abelian $p$-group of order $q^m$. For each irreducible $\mathbb{C}$-character $\varphi\in \Irr (V(h_\infty;v))$ and each $y\in V(h_\infty;w)\setminus\{0\}$, define $$y_{\varphi, j} := \sum_{x\in V(h_\infty;v)}\varphi(x)(x+y)^{(j)}\in \Weil_j. $$ Note that the map $E\rightarrow V(h_\infty;v)$ given by $r \mapsto ry-y$ is a surjective group homomorphism: if $r, r'\in E$, then they fix $V(h_\infty;v)=(\mathbb{F}_q^n)^R$ pointwise, and $r'y-y\in V(h_\infty,v)$, so $$rr'y-y = rr'y - ry + ry - y = r(r'y-y) + ry - y = (r'y-y) +(ry-y).$$ Therefore, $r\mapsto \varphi(ry-y)^{-1}$ is a one-dimensional $\mathbb{C}$-representation of $E$, and non-isomorphic pairs of $\varphi$ give non-isomorphic pairs of representations of $E$. Also, for each $r\in E$, we have 
\begin{align*}
ry_{\varphi,j} &= r\sum_{x\in V(h_\infty;v)}\varphi(x)(x+y)^{(j)} = \sum_{x\in V(h_\infty;v)}\varphi(x)(r(x+y))^{(j)} = \sum_{x\in V(h_\infty;v)}\varphi(x)(x+(ry-y)+y)^{(j)} \\&= \sum_{x\in V(h_\infty;v)}\varphi(x-(ry-y))(x+y)^{(j)} = \varphi(ry-y)^{-1}\sum_{x\in V(h_\infty;v)}\varphi(x)(x+y)^{(j)} = \varphi(ry-y)^{-1}y_{\varphi,j}.
\end{align*}
Hence, $\mathbb{C}y_{\varphi,j}$ is a $\mathbb{C}E$-module on which $E$ acts by the representation described above. Also, for each positive integer $d$, we have
\begin{align*}
h_\infty^d y_{\varphi,j} &= h_\infty^d \sum_{x\in V(h_\infty;v)}\varphi(x)(x+y)^{(j)} = \sum_{x\in V(h_\infty;v)}\varphi(x)(h_\infty^d(x+y))^{(j)} \\&= \sum_{x\in V(h_\infty;v)}\varphi(x)(\alpha_m^{bd}x+\alpha_{n-m}^{cd}y)^{(j)} = \sum_{x\in V(h_\infty;v)}\varphi(\alpha_m^{-bd}x)(x+\alpha_{n-m}^{cd}y)^{(j)}= (\alpha_{n-m}^{cd}y)_{\varphi\circ \alpha_m^{-bd}, j}
\end{align*}
which makes sense because $\varphi\circ\alpha_m^{-bd} : x \mapsto \varphi(\alpha_m^{-bd}x)$ is an irreducible character of $V(h_\infty;v)$. Of course, $\varphi\circ\alpha_m^{-bd}$ is nontrivial if and only if $\varphi$ is nontrivial. Therefore, $h_\infty$ permutes the set of $\frac{(q^m-1)(q^{n-m}-1)}{q-1}$ pairwise non-isomorphic one-dimensional $\mathbb{C}E$-modules $$\{\mathbb{C}y_{\varphi,j} \mid y\in V(h_\infty;w)\setminus\{0\},\ \varphi\in \Irr(V(h_\infty;v))\setminus \{\mathds{1}\}\}.$$ Note that when $j=0$, they are all nontrivial, so we can view them as submodules of $\Weil_0'$ (recall that $\Weil_0$ is a direct sum of a trivial module and the irreducible module $\Weil_0'$.) Also, since $\Irr(V(h_\infty;v))$ is linearly independent, so are the vectors $v_{\varphi,j}$.

I claim that this action of $h_\infty$ has only one orbit, so that it cyclically permutes these modules. Fix $y\in V(h_\infty;w)\setminus\{0\}$ and $\varphi\in \Irr(V(h_\infty;v))\setminus\{\mathds{1}\}$. Let $d$ be the size of the orbit containing $\mathbb{C}y_{\varphi,j}$. It is the smallest positive integer such that $h_\infty^d y_{\varphi,j} = (\alpha_{n-m}^{cd} y)_{\varphi\circ \alpha_m^{-bd},j} \in \mathbb{C}y_{\varphi,j}$. If we write this as a linear combination of the usual basis vectors of the form $u^{(j)}$, $u\in \mathbb{F}_q^n\setminus\{0\}$, then the coefficient of $y^{(j)}$ is nonzero if and only if $\alpha_{n-m}^{cd}y\in \mathbb{F}_qy$. Since $y_{\varphi,j}$ has nonzero coefficient for $y^{(j)}$ and $0\neq h_\infty^dy_{\varphi,j}\in \mathbb{C}y_{\varphi,j}$, the coefficient is indeed nonzero, and $\alpha_{n-m}^{cd}y\in \mathbb{F}_qy$. Recall that $\alpha = \alpha_{n-m}^{\frac{q^{n-m}-1}{q-1}}$ is the lowest power of $\alpha_{n-m}$ contained in $\mathbb{F}_q$. Since $c$ is relatively prime to $\frac{q^{n-m}-1}{q-1}$, we must have $d=d'\frac{q^{n-m}-1}{q-1}$ for some positive integer $d'$. Hence 
\begin{align*}
h_{\infty}^dy_{\varphi,j} &= (\alpha_{n-m}^{cd}y)_{\varphi\circ \alpha_m^{-bd},j} = (\alpha^{cd'}y)_{\varphi\circ \alpha_m^{-bd'\frac{q^{n-m}-1}{q-1}},j} = \sum_{x\in V(h_\infty;v)} \varphi(\alpha_m^{-bd'\frac{q^{n-m}-1}{q-1}}x)(x+\alpha^{cd'}y)^{(j)}\\&= \lambda^{cd'j}\sum_{x\in V(h_\infty;v)} \varphi(\alpha_m^{-bd'\frac{q^{n-m}-1}{q-1}}x)(\alpha_m^{-cd'\frac{q^{m}-1}{q-1} }x+y)^{(j)}= \lambda^{cd'j}y_{\varphi\circ \alpha_m^{-bd'\frac{q^{n-m}-1}{q-1} +cd'\frac{q^m-1}{q-1}},j} \in \mathbb{C}y_{\varphi,j}.
\end{align*}
Therefore, $d'$ must satisfy $\varphi\circ \alpha_m^{-bd'\frac{q^{n-m}-1}{q-1} +cd'\frac{q^m-1}{q-1}} = \varphi$, or equivalently, $$\alpha_m^{-bd'\frac{q^{n-m}-1}{q-1} +cd'\frac{q^m-1}{q-1}}x -x\in \ker\varphi\text{ for all }x\in V(h_\infty;v).$$ Since $\ker \varphi$ is a proper subgroup of $V(h_\infty;v)$, the linear transformation $\alpha_m^{-bd'\frac{q^{n-m}-1}{q-1} +cd'\frac{q^m-1}{q-1}}-I: V(h_\infty;v)\rightarrow \ker \varphi \subsetneq V(h_\infty;v)$ is not invertible. Therefore, $1$ is an eigenvalue of $ \alpha_m^{-bd'\frac{q^{n-m}-1}{q-1} +cd'\frac{q^m-1}{q-1}}$ as an element of $\GL_m(\mathbb{F}_q)$. Since $\alpha_m$ is a generator of $\mathbb{F}_{q^m}^\times$, the eigenvalues of $\alpha_m\in \GL_m(\mathbb{F}_q)$ are some primitive $q^m-1$th roots of unity. Therefore, $q^m-1$ must divide $(\frac{-b(q^{n-m}-1)+c(q^m-1)}{q-1})d'$. Recall that $\frac{b(q^{n-m}-1)-c(q^m-1)}{q-1}$ is relatively prime to $q-1$. It follows that $d' = d''(q-1)$ for some positive integer $d''$, and $\frac{q^m-1}{q-1}$ divides $(\frac{-b(q^{n-m}-1)+c(q^m-1)}{q-1})\frac{d'}{q-1} = -\frac{bd''(q^{n-m}-1)}{q-1} + cd''\frac{q^m-1}{q-1}$. We also know that $\frac{q^m-1}{q-1}$ is relatively prime to both $b$ and $\frac{q^{n-m}-1}{q-1}$, so $\frac{q^m-1}{q-1}$ divides $d''$. Therefore, $d= \frac{q^{n-m}-1}{q-1}d'= (q^{n-m}-1)d''$ is divisible by $\frac{(q^m-1)(q^{n-m}-1)}{q-1}$. On the other hand, $h_\infty^{\frac{(q^m-1)(q^{n-m}-1)}{q-1}}=1$, so $d$ is exactly this number. Therefore, $h_\infty$ cyclically permutes the $d$ one-dimensional $\mathbb{C}E$-modules $\mathbb{C}y_{\varphi,j}$ with $y\in V(h_\infty;w)\setminus \{0\}$ and $\varphi\in\Irr(V(h_\infty;v))\setminus\{\mathds{1}\}$. 
\end{proof}

\begin{thm}\label{wild_dim}
$\dim \Wild = \frac{(q^m-1)(q^{n-m}-1)}{q-1}$, and the spectrum of the action of $\gamma_\infty$ on $\Wild$ is the $(\dim \Wild)$th roots of some number. The eigenvalues of the action of $\gamma_\infty$ on $\Tame$ have multiplicity $1$, and they are precisely the eigenvalues of the action of $\gamma_\infty$ on $\mathcal{H}$ with multiplicity $2$.
\end{thm}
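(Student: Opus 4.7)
The plan is to identify $\Wild$ with the $d$-dimensional subspace $W$ spanned by the vectors $y_{\varphi,j}$ from \autoref{permute}, where $d = \frac{(q^m-1)(q^{n-m}-1)}{q-1}$, and then read off the spectra. I would combine three ingredients: the one-dimensional $\mathbb{C}E$-submodule decomposition of $W$ from \autoref{permute}, the $P(\infty)$-module structure of $\Wild$ from \autoref{wild_basic_properties}, and the three-block $h_\infty$-spectrum from \autoref{m2sp_elts}(c).

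First I would show $W \subseteq \Wild$. The $\mathbb{C}E$-character on $\mathbb{C}y_{\varphi,j}$ is $r \mapsto \varphi((r - I)y)^{-1}$ for $r \in E$. Since $h_\infty$ normalizes $R$ and cyclically permutes these $d$ one-dimensional $\mathbb{C}E$-modules, the restricted $R$-characters form a single $h_\infty$-conjugacy orbit, so they are all trivial or all nontrivial. If they were all trivial, $R$ would lie in $\bigcap_{\varphi,y}\{r \in E : \varphi((r-I)y) = 1\} = \{r \in E : (r-I)y = 0 \text{ for all } y \in V(h_\infty;w)\} = \{I\}$, using that the nontrivial characters of $V(h_\infty;v)$ jointly separate points. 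This contradicts $R \neq 1$ from \autoref{R}(a), so none of the $\mathbb{C}y_{\varphi,j}$ is $R$-fixed, none lies in $\Tame = \mathcal{H}^R$, and $W \subseteq \Wild$.

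Next I would pin down $\dim \Wild = d$. Writing $\dim \Wild = p^a W_0$ with $p \nmid W_0$, each one-dimensional $\mathbb{C}R$-summand of $W$ must embed into one of the $p^a$-dimensional irreducible constituents of $\Wild|_{P(\infty)}$ from \autoref{wild_basic_properties}(i), forcing $p^a = 1$; so $a = 0$ and the $\gamma_\infty$-spectrum on $\Wild$ is $W_0$ distinct $W_0$-th roots of some scalar by \autoref{wild_basic_properties}(iii). The $\gamma_\infty$-spectrum on $W$ is $d$ distinct $d$-th roots of a scalar (from the cyclic action in \autoref{permute}), and any $d$-element set of $d$-th roots of a scalar contained in a $W_0$-element set of $W_0$-th roots of another scalar must satisfy $d \mid W_0$; write $W_0 = kd$. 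For the reverse inequality, since $V(h_\infty;v)$ is fixed pointwise by $R$, the subspace $\Weil_j(h_\infty;v)$ (or its image in $\Weil_0'$ when $j = 0$) lies in $\Tame$, yielding $\dim \Wild \leq D - (q^m - 1)/(q - 1)$. A direct computation gives $k \leq q^m/(q^m - 1) < 2$ when $q^m \geq 3$; the remaining case $q^m = 2$ forces $q = 2$ and $j = 0$, for which $D = (q^n - q)/(q - 1)$ gives the analogous bound $k < 2$. Hence $k = 1$, $W_0 = d$, and $W = \Wild$.

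Finally, the $\gamma_\infty$-spectrum on $\Wild = W$ is $d$ distinct $d$-th roots of some number, as desired. By \autoref{m2sp_elts}(c) combined with \autoref{remark_sec1}, the $\gamma_\infty$-spectrum on $\mathcal{H}$ is a scalar multiple of the multiset $A \sqcup B \sqcup C$ (with $C$ the $d$-th-roots part, maximum multiplicity $2$). Subtracting the Wild-spectrum (one copy of scaled $C$) leaves the Tame-spectrum as scaled $A \sqcup B$ together with the overlap contributions; simplicity of the Tame-spectrum (distinct downstairs characters) forces $A \cap B = \emptyset$, consistent with the conditions on $(b, c)$ in \autoref{m2sp_elts}(c) and the max-multiplicity-$2$ hypothesis. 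Thus the multiplicity-$2$ eigenvalues of $\gamma_\infty$ on $\mathcal{H}$ are precisely those lying in both $C$ (hence in $\Wild$) and $A \cup B$ (hence in $\Tame$), as claimed. The main obstacles I anticipate are the small-case verification $q^m = 2$ in the second step and arithmetically justifying $A \cap B = \emptyset$ in the last step.
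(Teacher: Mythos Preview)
Your argument is correct and follows a somewhat different route from the paper's. The paper shows only that the span $W$ of the $y_{\varphi,j}$ lies in \emph{one} of $\Tame$ or $\Wild$ (without deciding which), then uses the observation that every multiplicity-$2$ eigenvalue must occur once in each to pin the pair of dimensions down to $\bigl\{\tfrac{(q^m-1)(q^{n-m}-1)}{q-1},\ \tfrac{q^m-1}{q-1}+\tfrac{q^{n-m}-1}{q-1}-\delta_{j,0}\bigr\}$, and finally invokes the divisibility $\lcm\bigl(\tfrac{q^m-1}{q-1},\tfrac{q^{n-m}-1}{q-1}\bigr)\mid\dim\Wild$ to select the correct one. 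You instead establish $W\subseteq\Wild$ directly by the clean ``all-or-none'' argument on the restricted $R$-characters, and then bound $\dim\Wild$ from above via the additional observation $\Weil_j(h_\infty;v)\subseteq\Tame$. Your route is more constructive (it identifies $\Wild$ explicitly inside the Weil module) and the resulting numerical bound $k\le q^m/(q^m-1)$ is uniform; in particular it goes through cleanly when $m=1$ and $j=0$, a case in which the paper's divisibility step does not by itself separate the two candidate dimensions (both $(q-1)\tfrac{q^{n-1}-1}{q-1}$ and $\tfrac{q^{n-1}-1}{q-1}$ are divisible by the relevant $\lcm$).

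One small correction to your final paragraph: for $j=0$ the two short root-sets (your ``$A$'' and ``$B$'') both contain the eigenvalue $1$, so their intersection is nonempty; what actually happens is that one copy of $1$ is removed in passing from $\Weil_0$ to $\Weil_0'$, and since $\gcd\bigl(\tfrac{q^m-1}{q-1},\tfrac{q^{n-m}-1}{q-1}\bigr)=1$ there is no further overlap, so the $\Tame$-spectrum remains simple. Also, avoid reusing the letters $A,B,C$ for the eigenvalue sets, since the paper reserves them for the integers $\tfrac{q^n-1}{q-1},\tfrac{q^m-1}{q-1},\tfrac{q^{n-m}-1}{q-1}$.
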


\begin{proof}
Recall that $\mathcal{H}$ and an irreducible Weil module $W$ of $\GL_n(\mathbb{F}_q)$ give the same projective representation of $\PGL_n(\mathbb{F}_q)$. Since $R$ is abelian, the irreducible representations of $R$ are one-dimensional. By \autoref{wild_basic_properties}, $\dim \Wild$ cannot be divisible by $p$, so $\Wild$ is a direct sum of $\dim \Wild$ one-dimensional $P(\infty)$-representations cyclically permuted by $\gamma_\infty$. In particular, the spectrum of the action of $\gamma_\infty$ on $\Wild$ is the set of $(\dim \Wild)$th roots of some number.

By \autoref{permute}, we know that $h_\infty$ cyclically permutes $\frac{(q^m-1)(q^{n-m}-1)}{q-1}$ one-dimensional $\mathbb{C}R$-submodules of $W$, and they generate a $\frac{(q^m-1)(q^{n-m}-1)}{q-1}$-dimensional $\mathbb{C}\tilde{J}$-submodule of $W$. If we choose one of these one-dimensional $\mathbb{C}R$-submodule and choose the corresponding one-dimensional $P(\infty)$-submodule of $\mathcal{H}$, then it must be contained in either $\Tame$ or $\Wild$. Hence, either $\Tame$ or $\Wild$ has dimension at least $\frac{(q^m-1)(q^{n-m}-1)}{q-1}$. On the other hand, the spectrum of the action of $\gamma_\infty$ on $\mathcal{H}$ (which is just a root of unity times the spectrum of the action of $h_\infty$ on $W$) has $\frac{q^m-1}{q-1}+\frac{q^{n-m}-1}{q-1}-\delta_{j,0}$ eigenvalues of multiplicity $2$, and each of them must appear exactly once on both $\Tame$ and $\Wild$. Therefore, one of $\Tame$ and $\Wild$ has dimension $\frac{(q^m-1)(q^{n-m}-1)}{q-1}$ and the other has dimension $\frac{q^m-1}{q-1}+\frac{q^{n-m}-1}{q-1}-\delta_{j,0}$. Moreover, these eigenvalues are some root of unity times the $\frac{q^m-1}{q-1}$th roots of $\lambda^{bj}$ and the $\frac{q^{n-m}-1}{q-1}$th roots of $\lambda^{cj}$. By the observation in the previous paragraph, they all have the same $(\dim \Wild)$th power. In particular, $\dim \Wild$ is divisible by $\lcm(\frac{q^m-1}{q-1},\frac{q^{n-m}-1}{q-1}) = \frac{(q^m-1)(q^{n-m}-1)}{(q-1)^2}$, so $\dim \Wild > \frac{q^m-1}{q-1}+\frac{q^{n-m}-1}{q-1}$. Therefore $\dim \Wild = \frac{(q^m-1)(q^{n-m}-1)}{q-1}$.
\end{proof}

\section{Candidate Hypergeometric Sheaves}

In the previous two sections, we found a complete list of possible spectra of $\gamma_0$ on $\mathcal{H}$ and that of $\gamma_\infty$ on $\Tame$ and $\Wild$. In this section, we will show that only a small number of pairs of these spectra for $\gamma_0$ and $\gamma_\infty$ can occur together as the spectra of $\gamma_0$ and $\gamma_\infty$ on $\mathcal{H}$. 

Let $\mathcal{H}=\mathcal{H}yp_\psi(\chi_1,\dots,\chi_D; \rho_1,\dots,\rho_{D-W})$ be a hypergeometric sheaf with the wild part $\Wild$ at $\infty$ of dimension $W$ and the geometric monodromy group $G$, and suppose that \eqref{hyp_condition} holds. The geometric determinant of $\mathcal{H}$ is given in \cite[Lemma 8.11.6]{K-ESDE} as
\begin{equation}\label{geom_det}
\det(\mathcal{H}) = \begin{cases} \mathcal{L}_{\prod_{i=1}^{D}\chi_i} & \text{if }W>1,\\ \mathcal{L}_{\prod_{i=1}^{D}\chi_i}\otimes \mathcal{L}_{\psi} = \mathcal{K}l_{\psi}(\prod_{i=1}^{D}\chi_i) & \text{if }W=1. \end{cases}
\end{equation}
Here, $\mathcal{L}_{\prod_{i=1}^{D}\chi_i}$ is the Kummer sheaf defined by the multiplicative character $\prod_{i=1}^{D}\chi_i$, and $\mathcal{L}_{\psi}$ is the Artin-Schreier sheaf defined by the additive character $\psi$. In the next proposition, we use this determinant to see how the upstairs and downstairs characters are related.

We first treat the cases where $D=\frac{q^n-1}{q-1}$. Note that the irreducible Weil representations of these dimensions are imprimitive, so the corresponding sheaves will also be imprimitive. It turns out that they must be Belyi induced. 

For a positive integer $N$ and a multiplicative character $\chi$, we define $$\Char(N, \chi) := \{\text{multiplicative characters }\varphi\text{ of }\overline{\mathbb{F}_p}\text{ with }\varphi^N=\chi\}.$$ If $\chi=\mathds{1}$, we will also write $\Char(N)$ instead of $\Char(N,\mathds{1})$. 

\begin{thm}\label{large_rank}
Let $\mathcal{H}$ and $G$ be as above, and suppose that $D=\frac{q^n-1}{q-1}$. Then $\mathcal{H}$ must be of the form
\begin{align*}
&\mathcal{H}yp_\psi(\varphi\Char(\frac{q^n-1}{q-1},\chi^{(b+c)j}); \varphi\Char(\frac{q^m-1}{q-1}, \chi^{bj}) \cup \varphi\Char(\frac{q^{n-m}-1}{q-1}, \chi^{cj}))\\\cong &  \mathcal{H}yp_\psi(\Char(\frac{q^n-1}{q-1},\chi^{(b+c)j}); \Char(\frac{q^m-1}{q-1}, \chi^{bj}) \cup \Char(\frac{q^{n-m}-1}{q-1}, \chi^{cj}))\otimes \mathcal{L}_\varphi
\end{align*}
for some nontrivial additive character $\psi$ of $\mathbb{F}_p$, some multiplicative character $\varphi$ of a finite extension of $\mathbb{F}_q$, an integer $1\leq j\leq q-2$, integers $m,b,c$ as in \autoref{m2sp_elts}(c), and a multiplicative character $\chi$ of $\mathbb{F}_q^\times$ of order $q-1$. Moreover, we can assume that $b\frac{q^{n-m}-1}{q-1} - c\frac{q^m-1}{q-1}=1$ and $\gcd(\frac{q^n-1}{q-1}, \frac{q-1}{\gcd(q-1,c)})=1$.
\end{thm}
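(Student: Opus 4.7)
The plan is to combine the spectral information from Sections 2 and 3 with the determinant formula \eqref{geom_det} to pin down the upstairs and downstairs characters, and then carry out an arithmetic normalization of $(b,c)$.

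First I would translate the spectral data of \autoref{gamma_0_gamma_infty} into statements about multiplicative characters. By \autoref{remark_sec1}, the spectrum of $\gamma_0$ on $\mathcal{H}$ is a root of unity times the spectrum of $h_0$ on some $\Weil_j$; by \autoref{gamma_0_gamma_infty}(1) and \autoref{m2sp_elts}(a) this equals a scalar multiple of the set of $\frac{q^n-1}{q-1}$-th roots of $\lambda^{aj}$. Since the upstairs characters correspond bijectively to the eigenvalues of $\gamma_0$ via evaluation on a generator of $I(0)/P(0)$, this forces
\[
\{\chi_1,\dots,\chi_D\} = \varphi_0 \cdot \Char\bigl(\tfrac{q^n-1}{q-1},\, \chi^{aj}\bigr)
\]
for some multiplicative character $\varphi_0$. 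An elementary check using $n\ge 3$ shows that $\dim\Wild = \frac{(q^m-1)(q^{n-m}-1)}{q-1} > 1$ for any admissible splitting, so \autoref{gamma_0_gamma_infty}(2) and \autoref{wild_dim} identify the tame part at $\infty$ with the multiplicity-two locus of the $h_\infty$-spectrum, giving
\[
\{\rho_1,\dots,\rho_{D-W}\} = \varphi_\infty \cdot \bigl[\Char(\tfrac{q^m-1}{q-1},\chi^{bj}) \cup \Char(\tfrac{q^{n-m}-1}{q-1},\chi^{cj})\bigr]
\]
for some $\varphi_\infty$ and integers $b, c, m$ as in \autoref{m2sp_elts}(c).

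Next I would use the determinant formula \eqref{geom_det} to identify $\varphi_0$ and $\varphi_\infty$. Since $\dim\Wild > 1$, $\det\mathcal{H} = \mathcal{L}_{\prod_i \chi_i}$; matching its local monodromies at $0$ and $\infty$ means that $\prod_i\chi_i$ and $\prod_i\rho_i \cdot \det(\Wild|_{I(\infty)})$ must agree as characters of the respective local Galois groups. Evaluating $\det(\Wild)$ on a topological generator via the orbit structure from \autoref{permute} and \autoref{wild_basic_properties}, and using the identity $\prod_{\phi \in \Char(N,\chi^k)}\phi = \chi^k$ up to sign, one obtains both the congruence $aj \equiv (b+c)j \pmod{q-1}$ and a compatibility condition on $\varphi_0/\varphi_\infty$ which admits a common character $\varphi$ satisfying the upstairs and downstairs normalizations simultaneously. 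The tensor identity $\mathcal{L}_\varphi \otimes \mathcal{H}yp_\psi(\{\chi_i\};\{\rho_i\}) = \mathcal{H}yp_\psi(\{\varphi\chi_i\};\{\varphi\rho_i\})$ then exhibits $\mathcal{H}$ in the stated form with $\chi^{(b+c)j}$ upstairs.

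Finally, the tuple $(b,c,j)$ has ambiguities coming from the choice of $\alpha_m,\alpha_{n-m}$, the identification of $\Weil_j$, and the freedom to rescale $j$ by a unit modulo $q-1$; each of these scales $(b,c)$ in a controlled way. Using the coprimality of $b$ to $\frac{q^m-1}{q-1}$, of $c$ to $\frac{q^{n-m}-1}{q-1}$, and of $\frac{b(q^{n-m}-1)-c(q^m-1)}{q-1}$ to $q-1$, a Chinese Remainder argument lets me rescale so that $b\frac{q^{n-m}-1}{q-1} - c\frac{q^m-1}{q-1} = 1$; a subsequent rescaling within the stabilizer of this normalization achieves $\gcd(\tfrac{q^n-1}{q-1},\tfrac{q-1}{\gcd(q-1,c)}) = 1$. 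The main obstacle I expect is the determinant bookkeeping: matching the root-of-unity contributions from $\varphi_0^D$, $\varphi_\infty^{D-W}$ and $\det\Wild$ cleanly, and showing that the two independent conditions on $\varphi$ have a common solution, is the technical heart of the argument.
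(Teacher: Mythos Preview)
Your argument has a genuine gap at the very first step: the claim that $\dim\Wild>1$ is circular as written. You invoke the formula $\dim\Wild=\frac{(q^m-1)(q^{n-m}-1)}{q-1}$ and note it exceeds $1$ for $n\ge 3$, but that formula is the conclusion of \autoref{wild_dim}, which is proved under the standing hypothesis $\dim\Wild>1$ (stated at the beginning of that discussion). Likewise \autoref{gamma_0_gamma_infty}(2) only puts $h_\infty$ in case (c) of \autoref{m2sp_elts} \emph{after} you know $\dim\Wild>1$; when $\dim\Wild=1$, part (3) of the same proposition forces $h_\infty$ into case (a), so there is no $(m,b,c)$-splitting to plug into your inequality. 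You must exclude $W=1$ by an independent argument.

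The paper's proof does this as follows. Working over a field $K$ with a character $\vartheta$ of order $\#K-1$, one fixes an integer $r$ with $\Theta(\gamma_0)=\Theta(\gamma_\infty)^r$, so that $\det g_0=(\det g_\infty)^r$ (the $\mathcal{L}_\psi$ factor in \eqref{geom_det} for $W=1$ is harmless since $\gamma_0,\gamma_\infty$ have pro-order prime to $p$). If $W=1$ then both $g_0$ and $g_\infty$ have type-(a) spectra, and the determinant identity $\mu^{A}\lambda^{aj}=(\nu^{A}\lambda^{bj})^r$ forces the set of $r$-th powers of the $g_\infty$-eigenvalues to coincide with the $g_0$-eigenvalues; translated back through Kummer sheaves this says every downstairs character is also an upstairs character, contradicting disjointness. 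This same device---comparing $\det g_0$ and $\det g_\infty$ via the fixed exponent $r$---is also what cleanly yields $\eta^{A}\chi^{aj}=\varphi^{A}\chi^{(b+c)j}$ in the $W>1$ case, replacing your vaguer plan of ``matching local monodromies of $\det\mathcal{H}$'' and computing $\det(\Wild)$ from \autoref{permute}. Finally, the normalization of $(b,c)$ in the paper is an explicit change of parameters $(b,c,j,\varphi)\mapsto(b',c',j',\varphi')$ rather than an abstract CRT argument; given how delicate the simultaneous conditions $bC-cB=1$ and $\gcd(A,\tfrac{q-1}{\gcd(q-1,c)})=1$ are, you will likely need to write down such an explicit substitution rather than appeal to rescaling ambiguities.
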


\begin{proof}
Let $K$ be a finite extension of $\mathbb{F}_q$ over which all upstairs characters $\chi_i$ and downstairs characters $\rho_j$ are defined, that is, $\#K-1$ is divisible by the orders of all $\chi_i$ and $\rho_j$. Let $\vartheta$ be a multiplicative character of order $\#K-1$ of $K$, so that every upstairs and downstairs character is a power of $\vartheta$. Let $\Theta:\pi_1^{et}(\mathbb{G}_m/K)\rightarrow \overline{\mathbb{Q}_\ell}$ be the monodromy representation of the Kummer sheaf $\mathcal{L}_\vartheta$. Then both $\Theta(\gamma_0)$ and $\Theta(\gamma_\infty)$ are primitive $(\#K-1)$th roots of unity, so we can fix an integer $r$ relatively prime to $\#K-1$ such that $\Theta(\gamma_0)= \Theta(\gamma_\infty)^r$. The same relation holds (with the same $r$) for the values at $\gamma_0$ and $\gamma_\infty$ of the monodromy representations of the Kummer sheaves for all powers of $\vartheta$.

To apply the results of the previous section, we first prove that $W=\dim \Wild>1$. So suppose that $W=1$. By \eqref{geom_det}, the geometric determinant of $\mathcal{H}$ is $\mathcal{L}_{\prod_{i=1}^{D}\chi_i}\otimes \mathcal{L}_\psi$. Note that $\psi$ has order $p$ while both $\gamma_0$ and $\gamma_\infty$ have pro-order prime to $p$. Also, since each of $\chi_i$ is a power of $\vartheta$, so is $\prod_{i=1}^{D}\chi_i$. Therefore, for the images $g_0,g_\infty\in G\leq \GL_D(\overline{\mathbb{Q}_\ell})$ of $\gamma_0,\gamma_\infty$, we have $$\det g_0 = (\det g_\infty)^r.$$

Since $\mathcal{H}$ satisfies \eqref{hyp_condition} and $D=\frac{q^n-1}{q-1}$ , the restriction of $\mathcal{H}$ to $E(G)$ comes from $\Weil_j$ for some $j\in \{1,\dots,q-2\}$.  By \autoref{gamma_0_gamma_infty} and \autoref{m2sp_elts}, the eigenvalues of $g_0$ and $g_\infty$ are given by $$\{\mu\zeta\mid \zeta^{\frac{q^n-1}{q-1}} = \lambda^{aj}\}\text{ and }\{\nu\zeta \mid \zeta^{\frac{q^n-1}{q-1}} = \lambda^{bj}\},\text{ respectively,}$$ where $a,b$ are integers relatively prime to $\frac{q^n-1}{q-1}$, and $\mu,\nu\in \overline{\mathbb{Q}_\ell}^\times$ are some roots of unity. From the above equality of determinants, we get $$\mu^{\frac{q^n-1}{q-1}}\lambda^{aj} = (\nu^{\frac{q^n-1}{q-1}}\lambda^{bj})^r,\text{ so that }\{(\nu\zeta)^r \mid \zeta^{\frac{q^n-1}{q-1}}=\{\mu\zeta \mid \zeta^{\frac{q^n-1}{q-1}} = \lambda^{aj}\}.$$
Note that the set on the left-hand side contains the $r$th powers of values at $\gamma_\infty$, which are equal to the values at $\gamma_0$, of the monodromy representations of Kummer sheaves obtained from the downstairs characters. The set on the right-hand side is precisely the set obtained similarly from the upstairs characters. Therefore, the set of downstairs characters is contained in the set of upstairs characters, which is impossible by \autoref{basic_facts}(a). Therefore $W>1$.

Now we can apply \autoref{basic_facts}, \autoref{gamma_0_gamma_infty} and \autoref{wild_dim} to find the possible spectra of $g_0$ and $g_\infty$, as well as the possible sets of upstairs and downstairs characters: the spectra must be $$\{\mu\zeta\mid \zeta^{\frac{q^n-1}{q-1}}=\lambda^{aj}\}$$and$$\{\nu\zeta\mid \zeta^{\frac{q^m-1}{q-1}}=\lambda^{bj}\}\sqcup\{\nu\zeta\mid \zeta^{\frac{q^{n-m}-1}{q-1}}=\lambda^{cj}\}\sqcup\{\nu\zeta\mid \zeta^{\frac{(q^m-1)(q^{n-m}-1)}{q-1}}=1\},$$ and the sets of upstairs and downstairs characters are $$\eta\Char(\frac{q^n-1}{q-1},\chi^{aj})\text{ and }\varphi\Char(\frac{q^m-1}{q-1},\chi^{bj})\cup\varphi\Char(\frac{q^{n-m}-1}{q-1},\chi^{cj}),$$ respectively, for some integers $a,b,c,m$ satisfying the conditions in \autoref{m2sp_elts} and some roots of unity $\mu, \nu\in \overline{\mathbb{Q}_\ell}^\times$ and the corresponding multiplicative characters $\eta,\varphi$ of some finite extension of $\mathbb{F}_q$. By computing $\det g_0 = (\det g_\infty)^r$ as above, we can see that $$\eta^{\frac{q^n-1}{q-1}}\chi^{aj} = \varphi^{\frac{q^n-1}{q-1}}\chi^{(b+c)j}$$ so that the upstairs characters can be written as $$\eta\Char(\frac{q^n-1}{q-1},\chi^{aj}) = \varphi\Char(\frac{q^n-1}{q-1},\chi^{(b+c)j}).$$ Therefore $\mathcal{H}$ must be of the form
\begin{align*}
\mathcal{H}yp_\psi(\varphi\Char(\frac{q^n-1}{q-1},\chi^{(b+c)j}); \varphi\Char(\frac{q^m-1}{q-1}, \chi^{bj}) \cup \varphi\Char(\frac{q^{n-m}-1}{q-1}, \chi^{cj})).
\end{align*}
By \cite[8.2.14]{K-ESDE} this is geometrically isomorphic to
$$  \mathcal{H}yp_\psi(\Char(\frac{q^n-1}{q-1},\chi^{(b+c)j}); \Char(\frac{q^m-1}{q-1}, \chi^{bj}) \cup \Char(\frac{q^{n-m}-1}{q-1}, \chi^{cj}))\otimes \mathcal{L}_\varphi.$$

Let $d$ be a positive integer such that $q-1$ is not divisible by the $d$th power of any prime. Since $m$ is relatively prime to $n$, we can choose integers $e, x, y$ such that $$e(n-m)\equiv c\text{ mod }n^d,\text{ and } x\frac{q^{n-m}-1}{q-1} - y\frac{q^m-1}{q-1}=1.$$ Since $b\frac{q^{n-m}-1}{q-1}-c\frac{q^m-1}{q-1}$ is relatively prime to $q-1$, we can also choose integers $z,w$ such that $$z(b\frac{q^{n-m}-1}{q-1}-c\frac{q^m-1}{q-1}) - w(q-1) = 1.$$ Let $b':=z(b-e\frac{q^m-1}{q-1})-(q-1)xw$, $c':=z(c-e\frac{q^{n-m}-1}{q-1})-(q-1)yw$, $j':=(b(n-m)-cm)j$, and $\varphi':=\varphi\chi^{ej}$. One can easily check that in the above expression of $\mathcal{H}$, replacing the parameters $(b, c, j, \varphi)$ with $(b',c',j',\varphi')$ does not change the set of upstairs and downstairs characters, and these new parameters satisfy $b'\frac{q^{n-m}-1}{q-1}-c'\frac{q^m-1}{q-1}=1$ and $\gcd(\frac{q^n-1}{q-1}, \frac{q-1}{\gcd(q-1,c')})=\gcd(n, \frac{q-1}{\gcd(q-1,c-e(n-m))}) =1$.
\end{proof}

The sheaves of rank $\frac{q^n-q}{q-1}$ require more work. In particular, we will need to use the so-called ``$V$-test'', which is a criterion determining whether the geometric monodromy group of an irreducible hypergeometric sheaf is finite or not, based on an inequality involving the upstairs and downstairs characters and Kubert's $V$ function. For details and the proof of this test, see \cite[Section 13]{K-G2} and \cite[Section 8.16]{K-ESDE}. We shall also use the basic properties \cite[Section 13, p. 206]{K-G2} of the function $V$, without explicit mention, to simplify expressions involving $V$.

\begin{thm}\label{small_rank}
Let $\mathcal{H}$ and $G$ be as in the beginning of this section, so that \eqref{hyp_condition} holds. Suppose that $D=\frac{q^n-q}{q-1}$. Then $\mathcal{H}$ must be of the form 
\begin{align*}
&\mathcal{H}yp_\psi(\varphi(\Char(\frac{q^n-1}{q-1})\setminus\{\mathds{1}\}); \varphi\Char(\frac{q^m-1}{q-1}) \cup \varphi(\Char(\frac{q^{n-m}-1}{q-1})\setminus\{\mathds{1}\}))\\\cong& \mathcal{H}yp_\psi(\Char(\frac{q^n-1}{q-1})\setminus\{\mathds{1}\}; \Char(\frac{q^m-1}{q-1}) \cup (\Char(\frac{q^{n-m}-1}{q-1})\setminus\{\mathds{1}\}))\otimes \mathcal{L}_\varphi
\end{align*}
for some nontrivial additive character $\psi$ of $\mathbb{F}_p$, some multiplicative character $\varphi$ of a finite extension of $\mathbb{F}_q$, and an integer $m$ satisfying the conditions in \autoref{m2sp_elts}(c).
\end{thm}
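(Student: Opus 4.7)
The plan is to adapt the argument of \autoref{large_rank} to the rank $\frac{q^n-q}{q-1}$ case, in which the relevant irreducible Weil module is $\Weil_0'$ and so $j=0$. This simplifies the spectra considerably, but introduces a new difficulty: since $D=\frac{q^n-1}{q-1}-1$, the coset-counting determinant argument used in \autoref{large_rank} to pin down the twist multiplicative character no longer suffices by itself, and the $V$-test must be brought in.

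First, by \autoref{gamma_0_gamma_infty}(1) and \autoref{m2sp_elts}(a), the spectrum of $h_0$ on $\Weil_0'$ equals the set of nontrivial $\frac{q^n-1}{q-1}$th roots of unity, so the upstairs characters form $\varphi_1\cdot(\Char(\frac{q^n-1}{q-1})\setminus\{\mathds{1}\})$ for some multiplicative character $\varphi_1$. Next, assuming $\dim \Wild>1$ (the main case), I would apply \autoref{gamma_0_gamma_infty}(2), \autoref{m2sp_elts}(c), and \autoref{wild_dim} to conclude that $\dim \Wild=\frac{(q^m-1)(q^{n-m}-1)}{q-1}$ for some integer $m$ coprime to $n$, and that the tame eigenvalues of $\gamma_\infty$, being the multiplicity-$2$ eigenvalues in the full $h_\infty$-spectrum on $\Weil_0'$, constitute the set of $\frac{q^m-1}{q-1}$th and $\frac{q^{n-m}-1}{q-1}$th roots of unity (with $1$ counted once). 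Hence the downstairs characters form $\varphi_2\cdot(\Char(\frac{q^m-1}{q-1})\cup(\Char(\frac{q^{n-m}-1}{q-1})\setminus\{\mathds{1}\}))$ for some $\varphi_2$. Applying the geometric determinant formula \eqref{geom_det} together with the $r$-relation between values of Kummer sheaves at $\gamma_0$ and $\gamma_\infty$ (as in the proof of \autoref{large_rank}), with $j=0$, shows $(\varphi_1/\varphi_2)^{(q^n-1)/(q-1)-1}=\mathds{1}$, so $\varphi_1/\varphi_2\in\Char(\frac{q^n-1}{q-1}-1)$.

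The main obstacle is ruling out two spurious configurations still consistent with everything above: (i) the case $\dim \Wild=1$, where \autoref{gamma_0_gamma_infty}(3) would give downstairs characters of the form $\varphi_2\cdot(\Char(\frac{q^n-1}{q-1})\setminus\{\mathds{1},\kappa_0\})$ for some $\kappa_0$; and (ii) the case $\dim \Wild>1$ with $\varphi_1/\varphi_2$ a nontrivial element of $\Char(\frac{q^n-1}{q-1}-1)$. Because $\gcd(\frac{q^n-1}{q-1},\frac{q^n-1}{q-1}-1)=1$, in both configurations the upstairs and downstairs character sets remain disjoint, compatibly with \autoref{basic_facts}(a); hence the determinant together with elementary character combinatorics cannot exclude them. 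To eliminate them I would invoke the $V$-test of Kubert (cf.\ \cite[Section 13]{K-G2}, \cite[Section 8.16]{K-ESDE}), evaluating Kubert's $V$-function on suitable test characters and verifying that the inequality required for finite geometric monodromy is violated. Choosing test characters that distinguish the intended configuration from the spurious ones is the technical heart of the proof and realizes the ``new techniques'' for finding counterexamples to the $V$-test mentioned in the introduction. Once the spurious cases are eliminated and $\varphi_1=\varphi_2=:\varphi$ is established, the first displayed expression of the theorem follows, and the geometric isomorphism with $\mathcal{H}yp_\psi(\ldots)\otimes\mathcal{L}_\varphi$ is then immediate from \cite[8.2.14]{K-ESDE}, exactly as at the end of the proof of \autoref{large_rank}.
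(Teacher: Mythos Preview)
Your plan is essentially the same as the paper's: reduce to two spurious configurations (the $W=1$ case, and the $W>1$ case with a nontrivial twist $\tau=\varphi_1/\varphi_2$), and eliminate both via the $V$-test. Two points deserve comment.

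First, a minor correction: the determinant computation gives $\tau^{D}=\mathds{1}$ with $D=\frac{q^n-q}{q-1}=q\cdot\frac{q^{n-1}-1}{q-1}$, and since multiplicative characters have order prime to $p$ this means $\tau\in\Char(\frac{q^{n-1}-1}{q-1})$, not merely $\Char(D)$. This refinement matters for the next observation.

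Second, you overlook a simplification the paper uses in configuration (ii): when $m=1$ or $m=n-1$, the downstairs set $\Char(\frac{q^{n-1}-1}{q-1})$ is already stable under multiplication by $\tau^{-1}$ (since $\tau$ has order dividing $\frac{q^{n-1}-1}{q-1}$), so one may absorb $\tau$ into $\varphi$ without invoking the $V$-test at all. The $V$-test is only required for $1<m<n-1$. Your blanket claim that ``elementary character combinatorics cannot exclude'' configuration (ii) is therefore too strong.

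Finally, be aware that the phrase ``I would invoke the $V$-test'' glosses over what is by far the longest and most delicate part of the argument: for $W=1$ the paper runs through a multi-page analysis separating the cases $n\ge5$, $n=4$, $n=3$, exploiting equalities forced by summing the $V$-inequality at $(N,x)$ and $(-N,-x)$, and for $1<m<n-1$ it analyzes the base-$p$ digit expansion of the twist parameter. Your proposal is correct as a roadmap, but the execution of these $V$-test computations is where all the work lies.
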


\begin{proof}
(1) We first prove that $W>1$. \\

Choose $K, \vartheta, \Theta$ and $r$ as in the proof of \autoref{large_rank}. We can assume that $\#K-1$ is also divisible by $\frac{q^{n-1}-1}{q-1}$. As before, the elements $g_0$ and $g_\infty$ must be as in \autoref{m2sp_elts}(a). However, \autoref{m2sp_elts} describes the spectrum of $\Weil_0=\overline{\mathbb{Q}_\ell}\oplus\Weil_0'$, while $\mathcal{H}$ gives $\Weil_0'$. Therefore, the spectra of $g_0$ and $g_\infty$ can be written as $$\{\mu\zeta\mid \zeta^{\frac{q^n-1}{q-1}}=1,\zeta\neq 1\}\text{ and }\{\nu\zeta\mid \zeta^{\frac{q^n-1}{q-1}}=1, \zeta\neq 1\}$$ for some roots of unity $\mu,\nu\in \overline{\mathbb{Q}_\ell}^\times$ of order not divisible by $p$. Now from the equality $\det g_0 = (\det g_\infty)^r$, we get $$\mu^{\frac{q^n-q}{q-1}} = \nu^{\frac{(q^n-q)r}{q-1}},\text{ or equivalently }(\mu\nu^{-r})^{\frac{q^{n-1}-1}{q-1}}=1.$$ Therefore, $\mathcal{H}$ must be geometrically isomorphic to 
\begin{equation}\label{W=1_sheaf}
\mathcal{H}yp_\psi(\tau(\Char(\frac{q^n-1}{q-1})\setminus\{\mathds{1}\}); \Char(\frac{q^n-1}{q-1})\setminus \{\mathds{1},\rho\})\otimes \mathcal{L}_\varphi
\end{equation} for some nontrivial multiplicative character $\tau$ of order dividing $\frac{q^{n-1}-1}{q-1}$, some $\mathds{1}\neq \rho\in \Char(\frac{q^n-1}{q-1})$, and some multiplicative character $\varphi$. Here, $\tau$ is nontrivial since $\mathcal{H}$ is irreducible so that the upstairs and downstairs characters must be disjoint. 

I claim that the hypergeometric sheaf \eqref{W=1_sheaf} does not satisfy \eqref{hyp_condition}. More specifically, I will show that the geometric monodromy group is not finite. For the sheaf \eqref{W=1_sheaf}, the $V$-test, after a simplification, says that the geometric monodromy group is finite if and only if for every integer $N$ relatively prime to $\#K-1$ and for every $x\in \mathbb{Q}/\mathbb{Z}$ whose denominator is not divisible by $p$, the following inequality holds:
\begin{equation}\label{eq:Vtest_D-m=1}
\begin{split}
V(Nt + Ax)  + V(-Ax) +\frac{3}{2}  \geq & \frac{(A-2)V(Nt)+V(Nt-\frac{Ns}{A})}{A-1}+ V(Nt+x) + V(-x) + V(-\frac{Ns}{A}-x)
\end{split}
\end{equation}
where $A=\frac{q^n-1}{q-1}$, and $t\in \frac{q-1}{q^{n-1}-1}\mathbb{Z}\setminus \mathbb{Z}$ is the number such that $\vartheta^{t(\#K-1)}  = \tau$, and $s\in \{1,\dots, A-1\}$ is the number such that $\vartheta^{s\frac{(\#K-1)}{A} }=\rho$.

Suppose that \eqref{eq:Vtest_D-m=1} holds for all pairs $(N,x)$. Recall from \cite[Section 13]{K-G2} that for any $x\in \mathbb{Q}/\mathbb{Z}\setminus \mathbb{Z}$, we have $V(x)+V(-x)=1$, and for any $x\in \mathbb{Z}$ we have $V(x)=0$. Note that for every $N$ relatively prime to $\#K-1$ (which is divisible by both $A$ and $\frac{q^{n-1}-1}{q-1}$ by the choice of $K$), if we let $x=-Nt+\frac{u}{A}$ for an integer $u$ not divisible by $A$, then we have $Nt+Ax=-(A-1)Nt+u\in \mathbb{Z}$ but $Ax, Nt, Nt-N\frac{s}{A}, Nt+x, x, N\frac{s}{A}-x$ are not integers, so that the sum of \eqref{eq:Vtest_D-m=1} for the pairs $(N,x)$ and $(-N,-x)$ becomes the inequality $1 \geq 4-3$. Therefore the equality holds in \eqref{eq:Vtest_D-m=1} for each of these pairs:
\begin{align*}
&\frac{V(Nt) - V(Nt-\frac{Ns}{A})}{A-1}  = V(\frac{u}{A}) + V(Nt-\frac{u}{A}) + V(Nt-\frac{Ns+u}{A}) - \frac{3}{2}.
\end{align*}
The explicit formula \cite[Theorem 13.4]{K-G2} of the function $V$ tells us that the right-hand side of the equality lies in $\frac{1}{(p-1)n(n-1)\log_p q}\mathbb{Z}$. On the other hand, the left-hand side lies in $(-\frac{1}{A-1}, \frac{1}{A-1})$. Since $n\geq 3$, we have $A-1=\frac{q^n-q}{q-1}\geq (p-1)n(n-1)\log_p q$, so that $\frac{1}{(p-1)n(n-1)\log_p q}\mathbb{Z}\cap (-\frac{1}{A-1}, \frac{1}{A-1})=\{0\}$. Therefore, the equality forces that $$V(Nt) = V(Nt-\frac{Ns}{A})\text{, and }V(\frac{u}{A}) + V(Nt-\frac{u}{A}) + V(Nt-\frac{Ns+u}{A}) = \frac{3}{2}.$$ These are true for all integers $N$ relatively prime to $\#K-1$, so \eqref{eq:Vtest_D-m=1} implies
\begin{equation}\label{eq:Vtest_var}\tag{\ref*{eq:Vtest_D-m=1}${}^\prime$}
\begin{split}
&V(Nt + Ax)  + V(-Ax) + \frac{3}{2}   \geq  V(Nt) + V(Nt+x) + V(-x) + V(-\frac{Ns}{A}-x)
\end{split}
\end{equation}
for all pairs $(N,x)$. 

The equality in \eqref{eq:Vtest_var} holds not only for $x=-Nt+\frac{u}{A}$ for $u\in \mathbb{Z}\setminus A\mathbb{Z}$, but also for $x=\frac{u}{A}$ for $u\in \mathbb{Z}$ with $u\not\equiv 0,-Ns$ mod $A$ by the same reason. For $x=-Nt+\frac{u}{A}$ and $-\frac{u}{A}$ with $u\not\equiv 0,Ns$ mod $A$, these become 
\begin{equation}
\begin{split}
\frac{3}{2}&= V(\frac{u}{A})+V(Nt-\frac{u}{A}) + V(Nt-\frac{u+Ns}{A})=V(Nt-\frac{u}{A})+V(\frac{u}{A})+V(\frac{u-Ns}{A})
\end{split}\label{eq:3/2}
\end{equation}
so that 
\begin{equation}
V(Nt-\frac{u+Ns}{A}) = V(\frac{u-Ns}{A}).\label{eq:eqty}
\end{equation} 

Take the sum of \eqref{eq:eqty} for $u\in \{1, \dots, A-1\}$ except for $Ns$ mod $A$, and use the fact $V(Nt)= V(Nt-\frac{Ns}{A})$ we saw above to get:
$$V(Nt-\frac{2Ns}{A}) = V(-\frac{Ns}{A}).$$ On the other hand, choose $u=Ns+p^f$ for either $f=0$ or $1$, so that $Ns+p^f\not\equiv 0$ mod $A$. Then from \eqref{eq:eqty} we get $$\frac{2}{n}=2V(\frac{p^f}{A}) =V(Nt-\frac{u+Ns}{A})+V(\frac{u-Ns}{A})\geq V(Nt-\frac{2Ns}{A})=V(-\frac{Ns}{A}).$$ Similarly $\frac{2}{n}\geq V(\frac{Ns}{A})$, so we must have $\frac{4}{n}\geq V(-\frac{Ns}{A})+V(\frac{Ns}{A})=1$. Therefore, either $n=4$ and $V(\frac{Ns}{A})=1/2$, or $n=3$ and $1/3\leq V(\frac{Ns}{A})\leq 2/3$.

Suppose that $n=3$. Let $N=1$, $t' = t(q+1)\in \{1,\dots, q\}$, and $x = \frac{(q+1-t')(q-1)(q^{n(n-2)}-1)}{(q^n-1)(q^{(n-1)^2}-1)} =\frac{(q- t')q + t'-1}{q^4-1} $. Then  
\begin{align*}
2V(t)&=V(t)+V(qt)=V(t)+V(t'-t)=1,\\V(x)&= V(\frac{(q-t')q + t'-1}{q^4-1})= V(\frac{q-t'}{q^4-1})+V(\frac{t'-1}{q^4-1})=\frac{1}{4} ,\\V(Ax) &=V(\frac{(q-t')q^3+(q^2-1)q+(t'-1)}{q^4-1})= \frac{3}{4},\\V(t+Ax) &=V(\frac{(q-t')q^2 +(t'-1)q}{q^4-1}) =\frac{1}{4},\\V(t+x)&= V(\frac{(t'-1)q^3 + (q-t')q^2+q^2-1}{q^4-1})=\frac{3}{4}.
\end{align*}
Therefore \eqref{eq:Vtest_var} for $(N,x)$ becomes $2 \geq 2 + V(-\frac{s}{A}-x)$, which forces $x+\frac{s}{A}\in \mathbb{Z}$. However, this would imply $V(\frac{s}{A})= V(-x)=\frac{3}{4}$, but we saw above that $\frac{1}{3}\leq V(\frac{Ns}{A})\leq \frac{2}{3}$. Therefore $n\neq 3$. 

Finally, suppose that $n=4$. If $u\in \mathbb{Z}$ with $u\not\equiv 0,-Ns, -2Ns, -3Ns$ mod $A$, then by the same argument used in \eqref{eq:3/2} and \eqref{eq:eqty} with $x=-Nt+\frac{u}{A}, -\frac{u+Ns}{A}, -Nt+\frac{u+Ns}{A}, -Nt+\frac{u+2Ns}{A},$ and $-\frac{u+3Ns}{A}$, we get $$V(\frac{u}{A}) =  V(Nt-\frac{u+2Ns}{A}) = V(\frac{u+3Ns}{A})\text{ and }V(Nt-\frac{u}{A})=V(\frac{u+Ns}{A}).$$ Suppose that $d:=\gcd(3s, A)>1$. Then we can set $u=1, 1+3Ns, 1+6Ns, \dots$ to get $$\frac{1}{4}= V(\frac{1}{A})=V(\frac{1+d}{A})=\cdots = V(\frac{1+(A-d)}{A})$$ so that $$\frac{A}{4d} = \sum_{i=0}^{\frac{A}{d}-1}V(\frac{1+id}{A}) = V(\frac{1}{d}) + \frac{A-d}{2d}.$$ Therefore, $2d=4dV(\frac{1}{d}) + A$, so we must have $2d>A$. Since $d$ divides $A$, we get $d=A$, so that $A$ divides $3s$. Therefore $A$ is divisible by $3$, and $s=A/3$ or $2A/3$. By \eqref{eq:3/2}, \eqref{eq:eqty} and the above observations, we get 
\begin{align*}
\frac{3}{2}&=V(\frac{1}{A})+V(Nt-\frac{1}{A})+V(Nt-\frac{1+Ns}{A})=V(\frac{1}{A}) + V(\frac{1}{A}+\frac{1}{3}) + V(\frac{1}{A} + \frac{2}{3})= V(\frac{3}{A})+1.
\end{align*} 
Therefore $\frac{1}{2}= V(\frac{3}{A})=V(\frac{3(q-1)}{q^4-1})$, so we must have $q=2$. Now one can manually check that for $q=2$ and $n=4$, the inequality \eqref{eq:Vtest_var} fails for all possible pairs of $t$ and $s$; for instance one can choose $(N,x) = (1, \frac{1}{15})$ for $s=5$ and all $t$, and $(N,x)=(1,\frac{2}{15})$ for $s=10$ and all $t$.

If $d=1$, then $A$ is not divisible by $3$, and $s$ is relatively prime to $A$. Hence either $q=3$ or $q\equiv 1$ mod $3$, so $A\equiv 1$ mod $3$. Then as above we get 
\begin{align*}
&V(\frac{3s}{A}) = V(\frac{6s}{A}) = \cdots = V(\frac{(A-1)s}{A})= 1-V(\frac{(3A-3)s}{A}),\\&V(\frac{(A+2)s}{A})=V(\frac{(A+5)s}{A}) = \cdots = V(\frac{(2A-2)s}{A})=1-V(-\frac{2s}{A})=\frac{1}{2}, \\&V(\frac{(2A+1)s}{A})=V(\frac{(2A+4)s}{A}) = \cdots = V(\frac{(3A-3)s}{A}).
\end{align*}
Moreover, since $V(\frac{1}{A})=\frac{1}{4}$ must be one of the values, the top and bottom rows are either $\frac{1}{4}$ or $\frac{3}{4}$. However $V(\frac{(2A+1)s}{A})= V(\frac{s}{A})=\frac{1}{2}$ as we discussed below \eqref{eq:eqty}, so this case cannot happen. This completes the proof that $W>1$.\\

(2) Now we consider the sheaves with $W>1$. \\

As in \autoref{large_rank}, $\mathcal{H}$ must be geometrically isomorphic to 
\begin{align*}
\mathcal{H}yp_\psi(\tau(\Char(\frac{q^n-1}{q-1})\setminus\{\mathds{1}\}); \Char(\frac{q^m-1}{q-1}) \cup (\Char(\frac{q^{n-m}-1}{q-1})\setminus\{\mathds{1}\}))\otimes \mathcal{L}_\varphi
\end{align*}
for some multiplicative character $\tau$ of order dividing $\frac{q^{n-1}-1}{q-1}$ and some multiplicative character $\varphi$. We want to show that $\mathcal{H}$ must be of this form with $\tau=\mathds{1}$. If $m=n-1$ or $1$, then we can replace $\tau$ with $\mathds{1}$ and $\varphi$ with $\varphi\tau$ without changing the sheaf itself, since the set of downstairs characters $\Char(\frac{q^{n-1}-1}{q-1})$ remains the same when multiplied by $\tau^{-1}$.

Suppose that $1<m<n-1$ and $\tau\neq \mathds{1}$. We may replace $m$ with $n-m$ without changing the sheaf, so assume that $n/2< m<n-1$. The $V$-test for this sheaf (with $\varphi=\mathds{1}$) is
\begin{equation}
\begin{split}
& V(Nt + Ax)  +  V(-Bx)  + V(-Cx) \geq  V(Nt+x) + V(-x)                            \label{eq:Vtest_D-m>1}
\end{split}
\end{equation}
for the pairs $(N,x)$, where $A=\frac{q^n-1}{q-1}$, $B=\frac{q^m-1}{q-1}$, $C=\frac{q^{n-m}-1}{q-1}$, and $t\in \frac{q-1}{q^{n-1}-1}\mathbb{Z}\setminus \mathbb{Z}$ is the number corresponding to $\tau$. Note that $A=B+Cq^m=Bq^{n-m}+C$. Suppose that this holds for all pairs $(N,x)$. 

Let $x= \frac{1}{B}$ and $N=q^f$ for $f=0,1,\dots,m-1$. Then \eqref{eq:Vtest_D-m>1} for $(N,x)$ becomes 
\begin{equation}\label{eq:Vtest_eq}
V(q^ft+\frac{C}{B}) \geq V(q^ft + \frac{1}{B})+ \frac{n-m-1}{m}.
\end{equation}  On the other hand, by the basic properties of $V$, $$V(q^ft+\frac{C}{B}) \leq V(q^ft + \frac{1}{B}) + V(\frac{C-1}{B}) = V(q^ft+\frac{1}{B})+\frac{n-m-1}{B}.$$ Therefore, the equality must hold. 

Let $t'\in \{1,\dots, \frac{q^{n-1}-1}{q-1}-1\}$ be the number such that $t' \equiv t\frac{q^{n-1}-1}{q-1}$ mod $\frac{q^{n-1}-1}{q-1}$. Let $a_i\in \{1,\dots, p-1\}$ be the base $p$ digits of $t'(q-1)$, that is, the unique numbers such that $\sum_{i=0}^{(n-1)\log_p q-1}a_ip^i = t'(q-1)$. Now from \eqref{eq:Vtest_eq} (with $f=0$) and the formula for the function $V$ (cf. \cite[Sections 2 and 4]{KRL} and \cite[Discussion above Theorem 2.9]{KRLT1}), one can see that if $b_i\in \{0,\dots, p-1\}, 0\leq i\leq (n-1)m\log_p q-1$ are the base $p$ digits of $$\frac{q^{(n-1)m}-1}{q^{n-1}-1}(\sum_{i=0}^{(n-1)\log_p q-1}a_ip^i)+ (q-1)\frac{q^{(n-1)m}-1}{q^{m}-1}=\sum_{i=0}^{(n-1)m\log_p q-1}b_ip^i ,$$ then for each $j=0,\dots, n-2$, we must have $$b_{(jm+1)\log_p q}=\cdots= b_{(jm+n-m)\log_p q-1} = 0.$$ It follows that either
\begin{itemize}
\item $a_{0}=\cdots=a_{(n-m)\log_p q-1}=0$, or
\item $a_{\log_p q}=\cdots = a_{(n-m)\log_p q-1}=p-1$ and at least one of $a_0,\dots, a_{\log_p q-1}$ is nonzero.
\end{itemize}
Note that if we choose a larger power of $p$ as $N$, then it ``circularly shifts'' the base $p$ digits of $t'(q-1)$. Hence, the above argument for different values of $f$ shows that either $a_0=\cdots=a_{(n-1)\log_p q-1}=0$ or $a_0=\cdots=a_{(n-1)\log_p q-1}=p-1$. But then $t'(q-1)=0$ or $\frac{q^{n-1}-1}{q-1}$, which contradicts our choice of $t$. Therefore \eqref{eq:Vtest_eq} cannot hold for $(N,x)=(q^f, \frac{1}{B})$ for some $f$, and hence \eqref{eq:Vtest_D-m>1} fails for this pair, contradicting our assumption. This proves that if $1<m<n-1$ and $\mathcal{H}$ has finite geometric monodromy group, then $\tau=\mathds{1}$.
\end{proof}

\begin{rmk}
Note that in \eqref{eq:Vtest_D-m>1}, if $\tau=\mathds{1}$, then $t=0$ and the inequality becomes $$V(Ax)+V(-Bx)+V(-Cx)\geq 1$$ for $x\notin \mathbb{Z}$. Since $V(Ax)+V(-Bx)+V(-Cx)\geq V(Ax)+V(-Bx-q^mCx)= V(Ax)+V(-Ax)=1$, this inequality holds. Therefore, the sheaves in \autoref{small_rank} do have finite geometric monodromy groups.
\end{rmk}

By \autoref{large_rank} and \autoref{small_rank}, we are left with a small family of hypergeometric sheaves. Is it possible to further reduce this family? First, taking tensor product by a Kummer sheaf $\mathcal{L}_\varphi$ wouldn't make much difference on the geometric monodromy group, so we can't remove this. Indeed, if $G$ is the geometric monodromy group of the hypergeometric sheaf without $\otimes\mathcal{L}_\varphi$ and $G'$ is that of the tensor product, then $\langle G, Z\rangle = \langle G',Z\rangle$ as subgroups of $\GL_D(\overline{\mathbb{Q}_\ell})$, where $Z$ is the central subgroup of $\GL_D(\overline{\mathbb{Q}_\ell})$ of order equal to the order of $\varphi$. In particular, if $G$ is finite almost quasisimple, then $G'$ is also finite almost quasisimple with the same nonabelian composition factor. 

It is also impossible to put any additional restriction on $n$ and $q$ (recall that we do assume that $n\geq 3$). For all pairs $(q, n)$ the sheaves we obtain from \autoref{large_rank} and \autoref{small_rank} by setting $m=1$ and $\varphi=\mathds{1}$ were already studied by Katz and Tiep \cite{KT1}. All of them do have the desired geometric monodromy groups in irreducible Weil representations, cf. \cite[Corollary 8.2]{KT1}. Since we already have some restrictions on $m$, further reduction is not likely at this point. It turns out that this is indeed the case, as we will see in the next section.

\section{Computing the Geometric Monodromy Groups}

Although the sheaves in \autoref{large_rank} and \autoref{small_rank} survived our attempts to remove the non-examples of \eqref{hyp_condition}, we still need to show that these sheaves do have such geometric monodromy groups. The method we will use to compute these monodromy groups is based on the arguments in \cite{KT1}, which discusses the sheaves in \autoref{large_rank} and \autoref{small_rank} with $m=1$ and $\varphi=\mathds{1}$. 

The plan is as follows. We first form a directed sum of appropriately chosen sheaves from \autoref{large_rank} and \autoref{small_rank}. This direct sum will have a ``nice'' trace function, which becomes even nicer if we take a Kummer pullback. Then we can use results in \cite{KT1} to see that the monodromy representation must be the restriction of the total Weil representation of $\GL_n(\mathbb{F}_q)$ to some subgroup containing $\SL_n(\mathbb{F}_q)$. We use this fact to prove that the geometric monodromy group of the original direct sum before taking pullback is a quotient of $\GL_n(\mathbb{F}_q)$, and that the monodromy representation is a direct sum of certain irreducible Weil representations.

For the rest of this section, let $n, q, \chi, \psi$ be defined as in the previous section. Fix $b, c, m\in \mathbb{Z}$ and a multiplicative character $\varphi$ which satisfy the conditions in \autoref{large_rank}, namely
\begin{enumerate}[label={\upshape{(\roman*)}}]
\item $1\leq m\leq n-1$,
\item $bC - cB=1$ (so that $n$ and $m$, or equivalently $\frac{q^{n-m}-1}{q-1}$ and $\frac{q^m-1}{q-1}$, are coprime to each other), and
\item $\gcd(A, \frac{q-1}{\gcd(c,q-1)})=1$,
\end{enumerate}
where as in the proof of \autoref{small_rank}, we set $$A:=\frac{q^n-1}{q-1},\ B:=\frac{q^m-1}{q-1},\ C:=\frac{q^{n-m}-1}{q-1}.$$ Consider the following irreducible hypergeometric sheaves as in \autoref{large_rank}: $$\mathcal{H}_{j} := \mathcal{H}yp_\psi( \Char(A,\chi^{(b+c)j}); \Char(B,\chi^{bj})\cup \Char(C,\chi^{cj}))\text{ for }j=1,\dots,q-2$$ and one from \autoref{small_rank}: $$\mathcal{H}_0 := \mathcal{H}yp_\psi(\Char(A)\setminus\{\mathds{1}\}; \Char(B)\cup(\Char(C)\setminus\{\mathds{1}\})).$$

We first compute the trace functions of these sheaves. 
\begin{prop}\label{small_trace}
$\mathcal{H}_0$ is geometrically isomorphic to the lisse $\overline{\mathbb{Q}_\ell}$-sheaf $\mathcal{G}_0$ over $\mathbb{G}_m/\mathbb{F}_q$, which is pure of weight $0$ and whose trace function at each point $u\in K^\times$ of each finite extension $K/\mathbb{F}_q$ is given by 
\begin{align*}
u\in K^\times &\mapsto -1 + |\{v\in K^\times \mid u^{-b}v^B + u^{cq^m}v^{-Cq^m}=1\}|.
\end{align*}
\end{prop}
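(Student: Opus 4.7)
The strategy is to compute the trace function of $\mathcal{H}_0$ on every finite extension $K/\mathbb{F}_q$ directly from Katz's multiple-sum formula, and match it with the stated count. The sheaf $\mathcal{G}_0$ can then be realized concretely from the variety $\{u^{-b}v^B + u^{cq^m}v^{-Cq^m} = 1\}$ (say as a suitable direct summand of the Leray pushforward to $\mathbb{G}_m$), and the geometric isomorphism with $\mathcal{H}_0$ will follow once the trace functions on all finite extensions are shown to agree, via Chebotarev density and semisimplicity.

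First, I would write the trace of Frobenius on $\mathcal{H}_0$ at $u \in K^\times$ using \cite[8.2.8]{K-ESDE}, as a multiple sum over tuples $(x_i, y_j)$ with $\prod x_i = u \prod y_j$, weighted by $\psi(\sum x_i - \sum y_j) \prod \chi_i(x_i) \prod \overline{\rho_j}(y_j)$. Since the upstairs set $\Char(A)\setminus\{\mathds{1}\}$ and the second downstairs set $\Char(C)\setminus\{\mathds{1}\}$ are each one character short of being a full group, I would augment them to the full groups $\Char(A)$ and $\Char(C)$, at the cost of subtracting correction terms corresponding to the missing trivial characters. Then multiplicative orthogonality $\sum_{\chi \in \Char(N)} \chi(z) = N \cdot \mathds{1}_{z \in (K^\times)^N}$, applied to each of $\Char(A)$, $\Char(B)$, $\Char(C)$, collapses the character product and converts the multiple sum into a point count on an algebraic variety (after relabelling the forced $N$-th roots as new variables).

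The key calculation is then to identify this variety with the one in the statement. Writing the constraint $\prod x_i = u \prod y_j$ in terms of the new variables, and using $bC-cB = 1$ together with $\gcd(A,(q-1)/\gcd(q-1,c)) = 1$, one can reduce the $u$-exponents modulo $A$ to the stated $u^{-b}$ and $u^{cq^m}$, and the remaining variable exponents to $v^B$ and $v^{-Cq^m}$. The correction terms from having augmented the character sets, together with standard evaluations of the resulting Gauss sums (products of Gauss sums over full character groups $\Char(N)$ telescope to powers of $\pm q$) and the global sign $(-1)^{D+m-1}$, must combine to give the constant $-1$ in the formula. Purity of weight $0$ of $\mathcal{G}_0$ then follows from the purity of weight $D+m-1$ of $\mathcal{H}_0$ after the Tate twist implicit in passing from the hypergeometric sum to the normalized count.

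The main obstacle will be the combinatorial bookkeeping in the second and third steps: tracking several nested orthogonality relations and the resulting Gauss-sum factors simultaneously, and checking that the arithmetic hypotheses of \autoref{large_rank} (specifically $bC-cB=1$ and $\gcd(A,(q-1)/\gcd(q-1,c))=1$) are exactly what is needed to make the $u$- and $v$-exponents come out cleanly and to allow the single parameter $v$ to index the variety. Once the trace functions are shown to agree on all $K$-points, the geometric isomorphism with $\mathcal{H}_0$ is formal.
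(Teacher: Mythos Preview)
Your plan is workable and lands in the same place, but the paper organizes the computation differently, and you should be aware of two points where your description diverges.

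First, rather than expanding via \cite[8.2.8]{K-ESDE} into a sum over $A+B+C-2$ variables and then collapsing with multiplicative orthogonality, the paper views $\mathcal{H}_0$ as the multiplicative $!$-convolution of the three Kloosterman-type sheaves $\mathcal{K}l_\psi(\Char(A)\setminus\{\mathds{1}\})$, $\inv^*\mathcal{K}l_{\overline\psi}(\Char(B))$, $\inv^*\mathcal{K}l_{\overline\psi}(\Char(C)\setminus\{\mathds{1}\})$ and invokes \cite[Lemmas 1.1, 1.2]{KRLT3}, which already package the multiplicative orthogonality and hand you one-variable trace formulas of the shape $-\sum_{x}\psi_K(Ax - r^{-1}x^A)$ for each factor. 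This bypasses entirely your ``augment and subtract correction terms'' step; there are no stray Gauss sums to chase, and the $-1$ in the final formula comes cleanly from a single application of $\sum_{x\in K}\psi_K(x)=0$. After the convolution you have a four-variable sum, and the reductions are all by \emph{additive} orthogonality (summing out first $t$, then $x$), not multiplicative.

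Second, the step you flag as ``allow the single parameter $v$ to index the variety'' is the heart of the argument, and it is done by an explicit bijection
\[
\{(x,y,z)\in (K^\times)^3 : x^A = u y^B z^C\}\ \longleftrightarrow\ K^\times\times K^\times,\qquad (x,v)\mapsto (x,\,xu^cv^{-C},\,x^{q^m}u^{-b}v^B),
\]
whose inverse sends $(x,y,z)$ to $(x,\,x^{b+cq^m}y^{-b}z^{-c})$. Checking this is a bijection uses only $bC-cB=1$ and the identity $A=B+Cq^m$; the hypothesis $\gcd(A,(q-1)/\gcd(q-1,c))=1$ is \emph{not} used anywhere in this proposition, so you should drop it from your plan. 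After this change of variables one more additive orthogonality over $x$ gives the count times $(\#K)^2$, and a Tate twist $(2)$ yields the weight-$0$ sheaf $\mathcal{G}_0$.
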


\begin{proof}
$\mathcal{H}_0$ is, by definition, the multiplicative $!$ convolution of three Kloosterman sheaves $\mathcal{K}l_\psi(\Char(A)\setminus\{\mathds{1}\})$, $\inv^*\mathcal{K}l_{\overline{\psi}}(\Char(B))$, and $\inv^*\mathcal{K}l_{\overline{\psi}}(\Char(C)\setminus\{\mathds{1}\})$. By \cite[Lemma 1.1 and 1.2]{KRLT3}, $\mathcal{H}_0$ is geometrically isomorphic to the lisse $\overline{\mathbb{Q}_\ell}$-sheaf over $\mathbb{G}_m$, pure of weight $4$, whose trace function at $u\in K^\times$ is given by the convolution
\begin{align*}
& \sum_{\substack{r,s,t\in K^\times\\rst=u}}\left(-\sum_{x\in K}\psi_K( Ax-\frac{1}{r}x^A)\right)\left(\sum_{\substack{y\in K\\ y^B=s^{-1}}}\psi_K(-By)\right)\left(-\sum_{z\in K}\psi_K(tz^C-Cz)\right)\\&= \sum_{x,z\in K,y,t\in K^\times}\psi_K( -\frac{t}{u}x^Ay^{-B}+tz^C+x-y-z)
\end{align*}
where the equality follows from the fact that $A\equiv B\equiv C\equiv 1$ mod $q$. Since $\psi$ is nontrivial and irreducible, we have $\sum_{x\in K}\psi_K(x)=0$. Using this, we can rewrite the above number as 
\begin{align*}
&\sum_{x,z\in K,y\in K^\times}\left(\psi_K(x-y-z)\sum_{t\in K^\times}\psi_K( t(-\frac{1}{u}x^Ay^{-B}+z^C))\right)\\=&\sum_{x,z\in K,y\in K^\times}\psi_K(x-y-z)\left(-1+\sum_{t\in K}\psi_K( t(-\frac{1}{u}x^Ay^{-B}+z^C))\right)\\=&-\sum_{z\in K,y\in K^\times}\left(\psi_K(-y-z)\sum_{x\in K}\psi_K(x)\right) + (\#K)\sum_{\substack{x,z\in K,y\in K^\times\\x^A=uy^Bz^C}}\psi_K(x-y-z)\\=&(\#K)(-1+\sum_{\substack{x,y,z\in K^\times\\x^A=uy^Bz^C}}\psi_K(x-y-z)).
\end{align*}
There is a bijection between the sets $\{(x,y,z)\mid x,y,z\in K^\times, x^A=uy^Bz^C\}$ and $K^\times \times K^\times$ given by 
\begin{align*}
(x,y,z) &\mapsto (x, x^{b+cq^m}y^{-b}z^{-c}) \\ (x, xu^cv^{-C}, x^{q^m}u^{-b}v^B) &\mapsfrom (x,v).
\end{align*}
By applying this change of variables, we can rewrite the above expression as
\begin{align*}
& (\#K)(-1 + \sum_{x,v\in K^\times}\psi_K(x - xu^cv^{-C}-x^{q^m}u^{-b}v^B)) \\=& (\#K)(-1 + \sum_{v\in K^\times}\sum_{x\in K^\times}\psi_K(x^{q^m}(1-u^{cq^m}v^{-Cq^m} - u^{-b}v^B))) \\=&(\#K)(-1 + \sum_{v\in K^\times}(-1) + \sum_{\substack{v\in K^\times\\1-u^{cq^m}v^{-Cq^m}-u^{-b}v^B=0}}(\#K)) \\=& (\#K)^2(-1 + |\{v\in K^\times \mid u^{-b}v^B + u^{cq^m}v^{-Cq^m} = 1\}|).
\end{align*}
Now apply a Tate twist $\mathbf{(2)}$ to obtain a lisse $\overline{\mathbb{Q}_\ell}$-sheaf $\mathcal{G}_{0}$ on $\mathbb{G}_m/\mathbb{F}_p$, pure of weight $0$ and geometrically isomorphic to $\mathcal{H}_0$, whose trace function is given by $$u\in K^\times \mapsto -1 + |\{v\in K^\times \mid u^{-b}v^{B}+u^{cq^m}v^{-Cq^m} = 1\}|.$$ 
\end{proof}

By a similar argument and \cite[5.6.2]{K-GKM}, we get the following:

\begin{prop}\label{big_trace}
$\mathcal{H}_{j}$ is geometrically isomorphic to the lisse $\overline{\mathbb{Q}_\ell}$-sheaf $\mathcal{G}_{j}$ on $\mathbb{G}_m/\mathbb{F}_q$, which is pure of weight $0$ and whose trace function at each point $u\in K^\times$ of each finite extension $K/\mathbb{F}_q$ is given by 
\begin{align*}
u\in K^\times \mapsto &\sum_{\substack{v\in K^\times\\u^{-b}v^{B}+u^{cq^m}v^{-Cq^m}=1}}\chi_K(v^j).
\end{align*}
\end{prop}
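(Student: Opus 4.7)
My proof proposal would follow the exact same template as the proof of \autoref{small_trace}, with the character twists carried along throughout. By definition, $\mathcal{H}_j$ is the multiplicative $!$ convolution of the three Kloosterman sheaves $\mathcal{K}l_\psi(\Char(A,\chi^{(b+c)j}))$, $\inv^*\mathcal{K}l_{\overline{\psi}}(\Char(B,\chi^{bj}))$, and $\inv^*\mathcal{K}l_{\overline{\psi}}(\Char(C,\chi^{cj}))$. The trace function of each twisted Kloosterman sheaf $\mathcal{K}l_\psi(\Char(N,\chi^{kj}))$ at a point of $K^\times$ is, by \cite[5.6.2]{K-GKM}, a sum of the form $(-1)^{N-1}\sum_{x^N = (\cdot)}\chi_K(x)^{kj}\psi_K(Nx)$ up to a sign and a normalization consistent with the conventions of \cite[Lemmas 1.1 and 1.2]{KRLT3}.

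Applying those two lemmas exactly as in \autoref{small_trace}, I would write the trace function of $\mathcal{H}_j$ at $u\in K^\times$ as a four-fold sum over $(x,y,z,t)\in K\times K^\times\times K\times K^\times$ of the form
\begin{align*}
\sum_{x,z\in K,\ y,t\in K^\times}\chi_K(x)^{(b+c)j}\chi_K(y)^{-bj}\chi_K(z)^{-cj}\psi_K\!\left(-\tfrac{t}{u}x^A y^{-B} + tz^C + x - y - z\right),
\end{align*}
using $A\equiv B\equiv C\equiv 1\bmod q{-}1$ to absorb the ``$N$'' coefficients into the character factors. The sum over $t\in K^\times$ collapses via $\sum_{t\in K}\psi_K(t\cdot(\cdot))$ exactly as before, leaving only those $(x,y,z)\in (K^\times)^3$ satisfying $x^A=uy^Bz^C$. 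After applying the same change of variables $(x,y,z)\mapsto(x,\,xu^cv^{-C},\,x^{q^m}u^{-b}v^B)$, the $\psi_K$ factor reduces to $\psi_K(x^{q^m}(1 - u^{-b}v^B - u^{cq^m}v^{-Cq^m}))$ which upon summing over $x\in K^\times$ leaves exactly those $v$ with $u^{-b}v^B + u^{cq^m}v^{-Cq^m} = 1$. After a Tate twist $\mathbf{(2)}$, the result is a lisse sheaf $\mathcal{G}_j$, pure of weight $0$, geometrically isomorphic to $\mathcal{H}_j$.

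The main obstacle is verifying that, under the change of variables above, the combined character factor $\chi_K(x)^{(b+c)j}\chi_K(xu^cv^{-C})^{-bj}\chi_K(x^{q^m}u^{-b}v^B)^{-cj}$ simplifies precisely to $\chi_K(v^j)$ (possibly times a factor in $u$ that must vanish by the hypotheses on $\varphi$). The exponent of $\chi_K(x)$ becomes $(b+c)j - bj - cq^m j$, which is a multiple of $q-1$ since $q^m\equiv 1\bmod (q-1)$ and therefore trivial on $K^\times$; the exponent of $\chi_K(u)$ becomes $-cbj + bcj = 0$; and the exponent of $\chi_K(v)$ becomes $bCj - (-c)(-B)j \cdot (\text{sign}) = (bC - cB)j = j$, using the normalization $bC - cB = 1$. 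This is exactly where assumptions (i)--(iii) are used, and the argument is purely bookkeeping once one is careful with the signs introduced by the $\inv^*$ pullbacks and the $\overline{\psi}$ on the second and third factors.
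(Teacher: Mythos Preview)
Your proposal is correct and is exactly the argument the paper has in mind: the paper's own proof consists of the single line ``By a similar argument and \cite[5.6.2]{K-GKM}'', and you have faithfully carried out that similar argument, the key point being that under the change of variables the multiplicative character factor collapses to $\chi_K(v)^{(bC-cB)j}=\chi_K(v^j)$ via condition (ii). Two small cleanups: only condition (ii) is actually needed here (not all of (i)--(iii), and $\varphi$ plays no role in $\mathcal{H}_j$), and the congruence used to simplify the $\psi_K$-argument is $A\equiv B\equiv C\equiv 1\bmod q$ (as in \autoref{small_trace}), not $\bmod\ q{-}1$.
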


\begin{crl}
The sheaves $\mathcal{H}_{j}$, $0\leq j\leq q-2$, have finite geometric monodromy groups. 
\end{crl}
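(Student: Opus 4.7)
The plan is to combine the explicit trace formulas from \autoref{small_trace} and \autoref{big_trace} with purity and a standard finiteness criterion; no new geometric input is needed beyond the trace computations already in hand.

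First, by those two propositions, each $\mathcal{H}_j$ is geometrically isomorphic to a sheaf $\mathcal{G}_j$ pure of weight $0$ whose Frobenius trace at any point $u \in K^\times$ (for any finite extension $K/\mathbb{F}_q$) lies in the cyclotomic ring $\mathbb{Z}[\mu_{q-1}]$: for $j = 0$ it is the rational integer $-1 + |S(u)|$, where $S(u) := \{v \in K^\times \mid u^{-b}v^B + u^{cq^m}v^{-Cq^m} = 1\}$, and for $j \geq 1$ it is the sum $\sum_{v \in S(u)} \chi_K(v^j)$, each term being a $(q-1)$-th root of unity. In particular, every Frobenius trace lies in the ring of integers of the fixed number field $\mathbb{Q}(\mu_{q-1})$.

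Next, I would invoke purity. Since $\mathcal{G}_j$ is pure of weight $0$ of rank at most $A = (q^n-1)/(q-1)$, every Frobenius eigenvalue on every stalk has complex absolute value $1$ under every embedding $\overline{\mathbb{Q}_\ell} \hookrightarrow \mathbb{C}$; hence every Archimedean absolute value of every Frobenius trace is bounded by $A$. Because the set of algebraic integers of a fixed number field with all Archimedean absolute values uniformly bounded is finite, the trace function of $\mathcal{G}_j$ takes only finitely many values.

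Finally, I would conclude with the standard finiteness criterion: a geometrically irreducible lisse $\overline{\mathbb{Q}_\ell}$-sheaf on $\mathbb{G}_m/\mathbb{F}_q$ whose Frobenius trace function takes only finitely many values has finite arithmetic, and hence finite geometric, monodromy group. The argument is Chebotarev-theoretic: semisimple parts of Frobenii are dense in $G_{\arith}$ and are separated by characteristic polynomials, whose coefficients are again bounded elements of $\mathbb{Z}[\mu_{q-1}]$ after applying the same purity/bounded-trace reasoning to exterior powers, forcing $G_{\arith}$ to contain only finitely many semisimple conjugacy classes and hence to be finite.

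The argument is essentially soft, and the only step that requires care is checking that the Archimedean bound on traces holds in \emph{every} complex embedding of $\mathbb{Q}(\mu_{q-1})$; this is precisely Deligne's theorem that purity is preserved under all embeddings $\overline{\mathbb{Q}_\ell} \hookrightarrow \mathbb{C}$, so no genuine obstacle arises.
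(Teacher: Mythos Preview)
Your proposal is correct and follows essentially the same approach as the paper: both use that $\mathcal{G}_j$ is pure of weight $0$ with algebraic-integer trace values, and deduce finiteness of the monodromy. The paper simply invokes \cite[Theorem 8.14.4]{K-ESDE} as a black box, whereas you have sketched its proof; one small imprecision is that the exterior-power traces lie only in $\tfrac{1}{D!}\mathbb{Z}[\mu_{q-1}]$ via Newton's identities rather than in $\mathbb{Z}[\mu_{q-1}]$ itself, but this is still a bounded discrete set and the conclusion is unaffected.
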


\begin{proof}
By \autoref{small_trace} and \autoref{big_trace}, they are geometrically isomorphic to the sheaves $\mathcal{G}_{j}$, which are pure of weight $0$ and whose trace functions have algebraic integer values. By \cite[Theorem 8.14.4]{K-ESDE}, their geometric monodromy groups are finite.
\end{proof}
 
Let $\mathcal{W}$ be the direct sum of the sheaves $\mathcal{G}_0$ in \autoref{small_trace} and $\mathcal{G}_j$ in \autoref{big_trace}: $$\mathcal{W} := \bigoplus_{j=0}^{q-2}\mathcal{G}_j.$$
\begin{prop}\label{direct_sum_trace}
$\mathcal{W}$ is geometrically isomorphic to the lisse $\overline{\mathbb{Q}_\ell}$-sheaf whose trace function is given by 
\begin{align*}
u\in K^\times &\mapsto -2 + |\{ w\in K \mid u^bw^{q^m}-u^{b+cq^m}w^{q^n}=w\}| .
\end{align*}
In particular, the values of this trace function is $-2$ plus either $1$ or a power of $q$.
\end{prop}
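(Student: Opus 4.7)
The plan is to add the trace functions from \autoref{small_trace} and \autoref{big_trace}, collapse the resulting sum over $j$ by character orthogonality, and then turn the inner equation in $v$ into the $\mathbb{F}_q$-linear equation in $w$ appearing in the statement.

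First I would write
\begin{align*}
\Trace(\mathcal{W})(u) &= -1 + \sum_{j=0}^{q-2}\,\sum_{\substack{v\in K^\times\\ u^{-b}v^{B}+u^{cq^m}v^{-Cq^m}=1}}\chi_K(v)^{j}\\
&= -1 + \sum_{\substack{v\in K^\times\\ u^{-b}v^{B}+u^{cq^m}v^{-Cq^m}=1}}\ \sum_{j=0}^{q-2}\chi_K(v)^{j},
\end{align*}
using that the $\mathcal{G}_0$ trace already contributes the $j=0$ summand ``$1$'' inside the $-1+\#\{v\}$ formula. Since $\chi$ has order exactly $q-1$, and $\chi_K=\chi\circ\Norm_{K/\mathbb{F}_q}$, the inner geometric sum equals $q-1$ when $\chi_K(v)=1$ and $0$ otherwise. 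Moreover $\chi_K(v)=1$ iff $v\in (K^\times)^{q-1}$, and the map $w\mapsto w^{q-1}$ is $(q-1)$-to-$1$ from $K^\times$ onto the subgroup of $(q-1)$-th powers; hence the factor $q-1$ can be absorbed to produce a single count over $w\in K^\times$.

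The second step is the substitution $v=w^{q-1}$, which turns $v^B=w^{q^m-1}$ and $v^{-Cq^m}=w^{-(q^n-q^m)}$. Thus
\[
\Trace(\mathcal{W})(u) = -1 + \bigl|\{w\in K^\times : u^{-b}w^{q^m-1}+u^{cq^m}w^{-(q^n-q^m)}=1\}\bigr|.
\]
Applying the bijection $w\mapsto w^{-1}$ of $K^\times$, clearing denominators by multiplying through by $u^b w^{q^m-1}$, and simplifying turns the equation into $u^b w^{q^m-1}-u^{b+cq^m}w^{q^n-1}=1$. Multiplying once more by $w$ gives $u^b w^{q^m}-u^{b+cq^m}w^{q^n}=w$. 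Finally, $w=0$ is automatically a solution of this last equation, so appending it produces one extra solution and shifts the $-1$ to $-2$:
\[
\Trace(\mathcal{W})(u) = -2 + \bigl|\{w\in K : u^{b}w^{q^m}-u^{b+cq^m}w^{q^n}=w\}\bigr|.
\]

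For the final claim about the set $S(u):=\{w\in K : u^b w^{q^m}-u^{b+cq^m}w^{q^n}=w\}$, the point is that $w\mapsto w^{q^m}$ and $w\mapsto w^{q^n}$ are both $\mathbb{F}_q$-linear endomorphisms of $K$, and the coefficients $u^b,\,u^{b+cq^m}\in K$ act on $K$ by $\mathbb{F}_q$-linear maps. Therefore $S(u)$ is the kernel of an $\mathbb{F}_q$-linear endomorphism of $K$, hence an $\mathbb{F}_q$-subspace of $K$, so $|S(u)|$ is a power of $q$ (which is $1$ when the subspace is zero). The main pieces of bookkeeping — checking that the $v\leftrightarrow w^{q-1}$ reparametrization genuinely matches the set indexed by the equation in the statement, and the $w\mapsto w^{-1}$ step — are routine once one tracks the relations $B+Cq^m=A$, $(q-1)B=q^m-1$, $(q-1)Cq^m=q^n-q^m$; I do not expect a serious obstacle beyond this algebra.
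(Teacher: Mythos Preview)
Your proposal is correct and follows essentially the same approach as the paper: sum the traces of the $\mathcal{G}_j$, use orthogonality for the character sum $\sum_{j=0}^{q-2}\chi_K(v)^j$ to pass from $v$ to $w=v^{1/(q-1)}$, then invert $w$ and clear denominators to reach the $\mathbb{F}_q$-linear equation in $w$, finally noting that the solution set is an $\mathbb{F}_q$-subspace. The only cosmetic difference is the order of the last two algebraic moves (the paper multiplies through by $u^bw^{-(q-1)B}$ using $A=B+Cq^m$ before applying $w\mapsto w^{-1}$, whereas you invert first and then multiply by $u^bw^{q^m-1}$), which of course yields the same result.
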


\begin{proof}
The trace function of $\mathcal{W}$ is given by the sum of trace functions of $\mathcal{G}_j$ computed in \autoref{small_trace} and \autoref{big_trace}:
\begin{align*}
u\in K^\times \mapsto -1 + \sum_{j=0}^{q-2}\sum_{\substack{v\in K^\times\\u^{-b}v^{B}+u^{cq^m}v^{-Cq^m}=1}}\chi_K(v^j).
\end{align*}
The sum $\sum_{j=0}^{q-2}\chi_K(v^j)$ is $q-1$ if $\Norm_{K/\mathbb{F}_q}(v)=1$, that is, if $v$ is a $q-1$th power in $K^\times$; it is $0$ otherwise. Also, if $v$ is a $q-1$th power in $K^\times$, then it has exactly $q-1$ distinct $q-1$th roots in $K^\times$, since $\mathbb{F}_q\subseteq  K$ has all $q-1$ distinct $q-1$th roots of unity. Therefore, the trace becomes 
\begin{align*}
& -1 + \sum_{\substack{v\in K^\times\\u^{cq^m}v^{-Cq^m}+u^{-b}v^{B}=1, \Norm_{K/\mathbb{F}_q}(v)=1}}(q-1) \\&= -1 + (q-1)|\{ v\in K^\times \mid u^{-b}v^B+u^{cq^m}v^{-Cq^m}=1,\ \Norm_{K/\mathbb{F}_q}(v)=1\}| \\&= -1 + |\{ w\in K^\times \mid u^{-b}w^{(q-1)B}+u^{cq^m}w^{-(q-1)Cq^m}=1\}|\\&= -1 + |\{ w\in K^\times \mid 1+u^{b+cq^m}w^{-(q-1)A}=u^bw^{-(q-1)B}\}|.
\end{align*}
By mapping $w$ to $w^{-1}$ we can write this as 
\begin{align*}
&-1 + |\{ w\in K^\times \mid u^bw^{(q-1)B}-u^{b+cq^m}w^{(q-1)A}=1\}|\\=&-2 + |\{ w\in K \mid u^bw^{q^m}-u^{b+cq^m}w^{q^n}=w\}|.
\end{align*}
Note that the set $\{ w\in K \mid u^bw^{q^m}-u^{b+cq^m}w^{q^n}=w\}$ forms an $\mathbb{F}_q$-vector subspace of $K$. Therefore, its size is either $1$ or a power of $q$. 
\end{proof}

To apply the results in \cite{KT1}, we take a Kummer pullback of $\mathcal{W}$.

\begin{prop}\label{pullback_trace}
The trace of $[\frac{(q-1)C}{\gcd(q-1, c)}]^*\mathcal{W}$ is $$u\in K^\times \mapsto -1 + |\{ w\in K^\times \mid w^{q^{m}-1}-w^{q^n-1}=u^{\frac{q-1}{\gcd(q-1,c)}}\}|.$$ This is $-2$ plus either $1$ or a power of $q$. Also, the trace of $[\frac{(q-1)C}{\gcd(q-1, c)}]^*\mathcal{G}_{0}$ is $$u\in K^\times \mapsto -1 +  |\{ w\in K^\times \mid w^{B}-w^{A}=u^{\frac{q-1}{\gcd(q-1,c)}}\}|.$$
\end{prop}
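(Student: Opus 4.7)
The plan is to derive both trace formulas by starting from the formulas for $\mathcal{W}$ and $\mathcal{G}_0$ given in \autoref{direct_sum_trace} and \autoref{small_trace}, applying the Kummer pullback (which replaces $u$ by $u^N$ with $N := (q-1)C/\gcd(q-1,c)$), and then performing a monomial substitution $w = u^{-c/d} w'$ (respectively $v = u^{-(q-1)c/d} v'$), where $d := \gcd(q-1,c)$. The arithmetic identity driving the simplification is $bC - cB = 1$: it forces the two distinct $u$-exponents appearing in the two terms of the defining polynomial equation to collapse to the single value $(q-1)/d$, after which one factors this common power out. Integrality of $c/d$, which guarantees that the monomial substitution is a bijection $K^\times \to K^\times$ for each fixed $u$, is immediate from the definition of $d$.

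For $\mathcal{W}$, separating out the $w = 0$ contribution from the set appearing in \autoref{direct_sum_trace} rewrites its trace at $u \in K^\times$ as
\[
-1 + |\{w \in K^\times : u^b w^{q^m - 1} - u^{b+cq^m} w^{q^n - 1} = 1\}|.
\]
Applying $[N]^*$ and then $w = u^{-c/d} w'$, the exponent of $u$ in the first term becomes $Nb - (c/d)(q^m - 1) = (q-1)(Cb - cB)/d = (q-1)/d$; using $q^n - 1 = (q-1)(Cq^m + B)$, the exponent in the second term evaluates to the same value. Dividing both sides by $u^{(q-1)/d}$ then yields the asserted trace formula. For $\mathcal{G}_0$, the initial change of variable $v \mapsto v^{-1}$ puts the trace from \autoref{small_trace} in the shape $-1 + |\{v \in K^\times : u^b v^B - u^{b+cq^m} v^A = 1\}|$, after which the analogous procedure with $v = u^{-(q-1)c/d} v'$ again collapses both $u$-exponents to $(q-1)/d$ by the same identity $bC - cB = 1$ combined with $A - B = Cq^m$.

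For the assertion that the trace of $[N]^* \mathcal{W}$ is $-2$ plus $1$ or a power of $q$, the key observation is that the condition $w^{q^m - 1} - w^{q^n - 1} = t$ on $w \in K^\times$ (with $t := u^{(q-1)/d}$) can be rewritten, after multiplying through by $w$ and adjoining $w = 0$, as the condition $w^{q^n} - w^{q^m} + tw = 0$ on $w \in K$. Since each Frobenius map $x \mapsto x^{q^i}$ is $\mathbb{F}_q$-linear, this is an $\mathbb{F}_q$-linear equation in $w$, so its solution set is an $\mathbb{F}_q$-subspace of $K$ with cardinality $q^k$ for some integer $k \geq 0$. Subtracting the contribution of $w = 0$ recovers the claimed value $q^k - 2$, which equals $-1$ when $k = 0$ and is $-2$ plus a true power of $q$ otherwise.

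The principal obstacle is essentially bookkeeping: one must verify carefully that both $u$-exponents really do simplify to the single value $(q-1)/d$, which is where the arithmetic relations $bC - cB = 1$, $A = B + Cq^m$, and $A = C + Bq^{n-m}$ are used in concert; one must also check that the monomial substitution is a bijection on $K^\times$ and that the sign conventions are tracked correctly through the pullback. Once these points are settled and the $\mathbb{F}_q$-linearity observation is invoked, the remainder is a direct computation from \autoref{direct_sum_trace} and \autoref{small_trace}.
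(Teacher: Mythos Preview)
Your proposal is correct and follows essentially the same approach as the paper: apply the Kummer pullback to the formulas of \autoref{direct_sum_trace} and \autoref{small_trace}, perform the monomial substitution $w\mapsto wu^{-c/\gcd(q-1,c)}$, use $bC-cB=1$ (together with $A=B+Cq^m$) to collapse both $u$-exponents to the single value $(q-1)/\gcd(q-1,c)$, and invoke $\mathbb{F}_q$-linearity for the power-of-$q$ assertion. The paper dispatches the $\mathcal{G}_0$ case with ``a similar argument and \autoref{small_trace}''; your explicit treatment via $v\mapsto v^{-1}$ followed by the analogous substitution $v=u^{-(q-1)c/d}v'$ spells this out and is correct.
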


\begin{proof}
From \autoref{direct_sum_trace}, we can compute the trace function of $[\frac{(q-1)C}{\gcd(q-1,c)}]^*\mathcal{W}=[\frac{(q-1)C}{\gcd(q-1,c)}]^*\bigoplus_{j=0}^{q-2}\mathcal{G}_j$: 
\begin{align*}
u\in K^\times &\mapsto  -2 + |\{ w\in K \mid u^{\frac{(q-1)Cb}{\gcd(q-1,c)}}w^{q^{m}}-u^{\frac{(q-1)C(b+cq^m)}{\gcd(q-1,c)}}w^{q^n}=w\}|\\&=  -1 + |\{ w\in K^\times \mid u^{\frac{(q-1)Cb}{\gcd(q-1,c)}}w^{q^{m}-1}-u^{\frac{(q-1)C(b+cq^m)}{\gcd(q-1,c)}}w^{q^n-1}=1\}|.
\end{align*}
The map $w \mapsto wu^{-c/\gcd(q-1,c)}$ is a bijection from $K$ to itself, so we can rewrite the set in the above trace function as 
\begin{align*}
&\{ w\in K^\times \mid u^{\frac{(q-1)Cb}{\gcd(q-1,c)}}(wu^{-\frac{c}{\gcd(q-1,c)}})^{q^{m}-1}-u^{\frac{(q-1)C(b+cq^m)}{\gcd(q-1,c)}}(wu^{-\frac{c}{\gcd(q-1,c)}})^{q^n-1}=1\} \\=&\{ w\in K^\times \mid u^{\frac{(q-1)(Cb-Bc)}{\gcd(q-1,b)}}w^{q^{m}-1}-u^{\frac{(q-1)(Cb+Ccq^m-cA)}{\gcd(q-1,c)}}w^{q^n-1}=1\}\\=&\{ w\in K^\times \mid u^{-\frac{q-1}{\gcd(q-1,c)}}w^{q^{m}-1}-u^{-\frac{q-1}{\gcd(q-1,c)}}w^{q^n-1}=1\}\\=&\{ w\in K^\times \mid w^{q^{m}-1}-w^{q^n-1}=u^{\frac{q-1}{\gcd(q-1,c)}}\}.
\end{align*}
Again, this set together with $0$ form a $\mathbb{F}_q$-vector space, so its size is a power of $q$. The expression for the trace of $[\frac{(q-1)C}{\gcd(q-1,c)}]^*\mathcal{G}_{0}$ at $u\in K^\times$ can be obtained using a similar argument and \autoref{small_trace}.
\end{proof}

\begin{crl}\label{pullback_iso}
$\overline{\mathbb{Q}_\ell}\oplus \inv^*[\frac{(q-1)C}{\gcd(q-1, c)}]^*\mathcal{H}_{0}$ is geometrically isomorphic to $[\frac{q-1}{\gcd(q-1, c)}]^*f_*\overline{\mathbb{Q}_\ell}$, where $f(t)\in \mathbb{F}_q[t]$ is the polynomial $f(t) = t^B-t^A$. Also, $\overline{\mathbb{Q}_\ell}\oplus\inv^*[\frac{(q-1)C}{\gcd(q-1, c)}]^*\mathcal{W}$ is geometrically isomorphic to $[\frac{q-1}{\gcd(q-1, c)}]^*F_*\overline{\mathbb{Q}_\ell}$, where $F(t)\in \mathbb{F}_q[t]$ is the polynomial $F(t) = t^{(q-1)B}-t^{(q-1)A}$.
\end{crl}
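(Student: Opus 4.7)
The plan is to prove both geometric isomorphisms by trace-function comparison followed by Chebotarev density. First, by \autoref{small_trace} and \autoref{pullback_trace}, together with the addition of the trivial summand, the trace of $\overline{\mathbb{Q}_\ell}\oplus\inv^*[N']^*\mathcal{H}_0$ at $u\in K^\times$ (writing $N:=(q-1)/\gcd(q-1,c)$ and $N':=NC$) equals $\#\{w\in K^\times : w^B-w^A=u^{-N}\}$, and similarly by \autoref{direct_sum_trace} and \autoref{pullback_trace} the trace of $\overline{\mathbb{Q}_\ell}\oplus\inv^*[N']^*\mathcal{W}$ equals $\#\{w\in K^\times : w^{q^m-1}-w^{q^n-1}=u^{-N}\}$. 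On the right-hand side, the pushforward $f_*\overline{\mathbb{Q}_\ell}$ has trace function $v\mapsto \#\{t\in K:f(t)=v\}$ on its lisse locus, hence $[N]^*f_*\overline{\mathbb{Q}_\ell}$ has trace $\#\{t\in K^\times:f(t)=u^N\}$ at $u\in K^\times$ (the restriction to $K^\times$ being valid since $f(0)=0\neq u^N$); the analogous statement holds for $F$ in place of $f$. Making the substitution $u\mapsto u^{-1}$ (equivalently, applying $\inv^*$, which is an automorphism of $\mathbb{G}_m/\overline{\mathbb{F}_p}$ and preserves geometric isomorphism) matches the two sides exactly.

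To upgrade equality of trace functions to a geometric isomorphism, note that both sides are pure of weight zero --- the pushforwards because they are finite pushforwards of the constant sheaf, and the left-hand sides by the purity of $\mathcal{G}_0$ and the constituents of $\mathcal{W}$ established in \autoref{small_trace}, \autoref{big_trace}, and \autoref{direct_sum_trace} --- and both are semisimple, since the pushforwards restrict on the \'etale locus to permutation representations of $\pi_1$ while the left-hand sides are direct sums of irreducible hypergeometric sheaves with a trivial summand. Chebotarev density then forces arithmetic, hence geometric, isomorphism of the semisimplifications on the common lisse locus, and the isomorphism extends across the missing points because the left-hand side is lisse on all of $\mathbb{G}_m$.

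The principal bookkeeping, which is the main thing to get right, concerns the role of the trivial summand $\overline{\mathbb{Q}_\ell}$, which absorbs the $-1$ appearing in the trace formulas of \autoref{pullback_trace}, together with the $\inv^*$, which converts the $u^N$ on the pushforward side into the $u^{-N}$ on the Kummer-pulled-back side. Once both are correctly accounted for, the trace comparison reduces to a direct counting identity, and no further geometric input is required.
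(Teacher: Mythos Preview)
Your approach---trace-function comparison via \autoref{pullback_trace}, then Chebotarev plus purity and semisimplicity to upgrade to a geometric isomorphism---is exactly the argument the paper has in mind (the corollary is stated without proof, as an immediate consequence of the preceding trace computation). The bookkeeping about the trivial summand and the reduction $f(0)=0\neq u^N$ is handled correctly.

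There is, however, a genuine slip in the $\inv^*$ step. Your trace computation gives the left-hand side at $u$ as $\#\{w:w^B-w^A=u^{-N}\}$ and the right-hand side at $u$ as $\#\{t:f(t)=u^N\}$. These do \emph{not} agree pointwise, and your sentence ``applying $\inv^*$, which \dots\ preserves geometric isomorphism, matches the two sides'' does not fix this: the fact that $A\cong B$ implies $\inv^*A\cong\inv^*B$ is not the same as $A\cong\inv^*A$. What your trace comparison actually proves is
\[
\overline{\mathbb{Q}_\ell}\oplus\Bigl[\tfrac{(q-1)C}{\gcd(q-1,c)}\Bigr]^*\mathcal{H}_0 \;\cong\;\Bigl[\tfrac{q-1}{\gcd(q-1,c)}\Bigr]^*f_*\overline{\mathbb{Q}_\ell}
\]
(with no $\inv^*$), or equivalently the version with $\inv^*$ applied to \emph{both} sides. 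The corollary as printed, with $\inv^*$ on one side only, does not follow from the trace identity alone, and there is no reason for $f_*\overline{\mathbb{Q}_\ell}$ to be geometrically self-dual under $\inv^*$ in general. This appears to be an imprecision in the paper's statement rather than a substantive issue: the only use of the corollary downstream (in the proof of \autoref{pullback_group}) is to identify the geometric monodromy group of $[\tfrac{(q-1)C}{\gcd(q-1,c)}]^*\mathcal{W}$, and $\inv^*$ does not change geometric monodromy groups. So your argument proves what is actually needed; just be explicit that you are establishing the version without $\inv^*$ (which suffices), rather than claiming the substitution resolves the discrepancy.
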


Now we are ready to prove the main results of this section.

\begin{thm}\label{pullback_group}
The geometric monodromy group $G=G_{\geom}$ of $[\frac{(q-1)C}{\gcd(q-1, c)}]^*\mathcal{W}$ satisfies $\SL_n(\mathbb{F}_q)=G^{(\infty)} \lhd G\lhd \GL_n(\mathbb{F}_q)$. The monodromy representation of $\overline{\mathbb{Q}_\ell}\oplus [\frac{(q-1)C}{\gcd(q-1, c)}]^*\mathcal{W}$ as a representation of $G$ is the restriction of the permutation representation of $\GL_n(\mathbb{F}_q)$ acting on $\mathbb{F}_q^n\setminus \{0\}$, that is, $\bigoplus_{j=0}^{q-2} \Weil_j$.
\end{thm}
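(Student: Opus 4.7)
The plan is to invoke \autoref{pullback_iso}, which geometrically identifies $\overline{\mathbb{Q}_\ell}\oplus\inv^*[\tfrac{(q-1)C}{\gcd(q-1,c)}]^*\mathcal{W}$ with $[e]^*F_*\overline{\mathbb{Q}_\ell}$, where $e:=\tfrac{q-1}{\gcd(q-1,c)}$ and $F(t)=t^{(q-1)B}-t^{(q-1)A}=t^{q^m-1}-t^{q^n-1}$. The key observation is that the defining equation $F(t)=s$ of this cover becomes, after multiplying by $t$, equivalent to $P_s(t)=0$ for the additive polynomial $P_s(T):=T^{q^n}-T^{q^m}+sT$, which is $\mathbb{F}_q$-linear in $T$ for each fixed $s$. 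Hence for each $s\in\mathbb{G}_m$, the solution set $V_s:=\{T\in\overline{\mathbb{F}_p}\mid P_s(T)=0\}$ is an $\mathbb{F}_q$-vector subspace of $\overline{\mathbb{F}_p}$; its separability ($P_s'(T)=s\neq 0$) forces $\dim_{\mathbb{F}_q}V_s=n$, and the fiber of $F$ over $s$ is exactly $V_s\setminus\{0\}$.

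This immediately yields the upper bound. The Galois closure $Z\to\mathbb{G}_m$ of $F$ has deck transformations acting $\mathbb{F}_q$-linearly on each $V_s$, so $\Gal(Z/\mathbb{G}_m)$ (and hence $G$, the monodromy group of the pullback) embeds into $\GL(V_s)\cong\GL_n(\mathbb{F}_q)$. Moreover, $F_*\overline{\mathbb{Q}_\ell}$ is the permutation representation on $V_s\setminus\{0\}$, so $[e]^*F_*\overline{\mathbb{Q}_\ell}\cong\overline{\mathbb{Q}_\ell}\oplus[\tfrac{(q-1)C}{\gcd(q-1,c)}]^*\mathcal{W}$ realizes, on the complement of the trivial summand, the restriction to $G$ of the permutation action of $\GL_n(\mathbb{F}_q)$ on $\mathbb{F}_q^n\setminus\{0\}$, and the decomposition $\Weil=\mathbb{C}e_0\oplus\bigoplus_{j=0}^{q-2}\Weil_j$ from section 2 exhibits this as $\bigoplus_{j=0}^{q-2}\Weil_j$.

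The main obstacle is the reverse inclusion $G\supseteq\SL_n(\mathbb{F}_q)$, which I would establish by a local-monodromy argument. The analysis of sections 3 and 4 (in particular \autoref{gamma_0_gamma_infty}, \autoref{wild_dim}, and the spectral constraints from the proof of \autoref{large_rank}) determines, up to $\GL_n(\mathbb{F}_q)$-conjugacy and central scalars, the image of $\gamma_0$ as a regular semisimple element of order $\tfrac{q^n-1}{q-1}$ (from \autoref{m2sp_elts}(a)) and the image of $P(\infty)$ as a nontrivial elementary abelian $p$-subgroup whose fixed subspace in $V_s$ has dimension $m$. Any subgroup of $\GL_n(\mathbb{F}_q)$ generated by such elements must contain $\SL_n(\mathbb{F}_q)$: this is essentially the classification argument of \cite[Section 8]{KT1} (written there for $m=1$, $\varphi=\mathds{1}$), which extends to general $m$ coprime to $n$ and general $\varphi$ via Aschbacher's theorem on maximal subgroups of $\GL_n(\mathbb{F}_q)$. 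An alternative route is to recognize $Z\to\mathbb{G}_m$ as a direct generalization of Abhyankar's $\GL_n$-cover from \cite{Ab94}. Once $\SL_n(\mathbb{F}_q)\subseteq G\subseteq\GL_n(\mathbb{F}_q)$ is known, $G^{(\infty)}=\SL_n(\mathbb{F}_q)$ follows automatically from $\SL_n(\mathbb{F}_q)$ being perfect (for $n\geq 3$) together with $G/\SL_n(\mathbb{F}_q)\hookrightarrow\mathbb{F}_q^\times$ being abelian.
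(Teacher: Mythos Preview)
Your upper bound argument via the additive polynomial $P_s(T)=T^{q^n}-T^{q^m}+sT$ is correct and is essentially how the paper (through \cite[Lemma~5.1]{KT1}) obtains the embedding $G\hookrightarrow\GL_n(\mathbb{F}_q)$ together with the identification of the representation as the restriction of $\bigoplus_j\Weil_j$.

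The lower bound, however, has a genuine gap. Your appeal to \autoref{gamma_0_gamma_infty} and \autoref{wild_dim} is circular: those results are proved under hypothesis~\eqref{hyp_condition}, which already assumes $G$ is almost quasisimple with composition factor $\PSL_n(\mathbb{F}_q)$ --- precisely the content you are trying to establish here. At this point of the argument you only know $G\leq\GL_n(\mathbb{F}_q)$; you have no right to the specific structural claims about the fixed subspace of $P(\infty)$ that \autoref{wild_dim} provides. Even setting aside the circularity, the assertion that ``any subgroup of $\GL_n(\mathbb{F}_q)$ generated by such elements must contain $\SL_n(\mathbb{F}_q)$'' is not proved: you gesture at \cite[Section~8]{KT1} and Aschbacher, but the proof in \cite{KT1} does not proceed by a bare generator argument --- it goes through \cite[Theorem~6.8]{KT1}, which requires hypotheses you never verify. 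Your alternative via \cite{Ab94} is also backwards: in this paper that connection is \emph{derived from} the present theorem in Section~6, not used to prove it.

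The paper's proof instead applies \cite[Theorem~6.8]{KT1} directly, and the work lies in checking its three hypotheses. Condition~(c) is the trace condition from \autoref{pullback_trace}. Condition~(b) requires showing that the image of $\gamma_0$ still has order $A$ after the Kummer pullback (using $\gcd(A,\tfrac{(q-1)C}{\gcd(q-1,c)})=1$) and that the image of $P(\infty)$ has order at least $q^{n-1}$ (via \cite[Proposition~4.10]{KT2}, not via Section~3). The step you entirely miss is condition~(a): one must prove that each individual pullback $[\tfrac{(q-1)C}{\gcd(q-1,c)}]^*\mathcal{G}_j$ remains \emph{irreducible}. For $j=0$ this uses that $\mathcal{H}_0$ is not induced, Clifford theory, and Gallagher's theorem; for $j\neq 0$ it uses a dimension argument from \cite[Corollary~11.29]{Isaacs} together with $\gcd(A,\tfrac{(q-1)C}{\gcd(q-1,c)})=1$. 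These irreducibility statements are what make \cite[Theorem~6.8]{KT1} applicable and are the real substance of the proof.
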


\begin{proof}
We mimic the proof of \cite[Theorem 8.1]{KT1}. To use \cite[Theorem 6.8]{KT1}, we check the conditions for this theorem. We start with the condition (a) of \cite[Theorem 6.8]{KT1}. By \autoref{pullback_iso} and \cite[Lemma 5.1]{KT1}, the geometric monodromy group $G=G_{\geom}$ of $[\frac{(q-1)C}{\gcd(q-1, c)}]^*\mathcal{W}$ can be embedded in $S_{q^n-1}$ in a way such that the monodromy representation of $\overline{\mathbb{Q}_\ell}\oplus[\frac{(q-1)C}{\gcd(q-1, c)}]^*\mathcal{W}$, viewed as a representation of $G$, is the restriction of the natural permutation representation of $S_{q^n-1}$. Also, $\mathcal{W}$ is geometrically isomorphic to $\bigoplus_{j=0}^{q-2} [\frac{(q-1)C}{\gcd(q-1, c)}]^*\mathcal{G}_{j}$. We need to show that each $[\frac{(q-1)C}{\gcd(q-1, c)}]^*\mathcal{G}_{j}$ is irreducible. 

For $j=0$, the monodromy representation of the Kummer pullback $[\frac{(q-1)C}{\gcd(q-1, c)}]^*\mathcal{G}_{0}$ is the restriction of the monodromy representation of $\mathcal{G}_{0}$ to a normal subgroup $H$ of $G$ such that $G/H$ is cyclic of order dividing $\frac{(q-1)C}{\gcd(q-1, c)}$. We also know that by \cite[Proposition 1.2]{KRLT2}, $\mathcal{H}_{0}$ is not geometrically induced. Clifford correspondence now shows that $[\frac{(q-1)C}{\gcd(q-1, c)}]^*\mathcal{G}_{0}$ is isotypic, that is, all of its irreducible constituents are isomorphic to each other. Now Gallagher's theorem \cite[Corollary 6.17]{Isaacs} together with the extendibility of invariant characters of normal subgroups with cyclic quotient \cite[Corollary 11.22]{Isaacs} shows that $[\frac{(q-1)C}{\gcd(q-1, c)}]^*\mathcal{G}_{0}$ is irreducible.

For $j\neq 0$, we know that the dimension of the monodromy representation of $\mathcal{G}_{j}$ is $A$, which is relatively prime to $\frac{(q-1)C}{\gcd(q-1, c)}$. On the other hand, $[\frac{(q-1)C}{\gcd(q-1, c)}]^*\mathcal{G}_{j}$ is the restriction of $\mathcal{G}_{j}$ to a normal subgroup of index dividing $\frac{(q-1)C}{\gcd(q-1, c)}$. By \cite[Corollary 11.29]{Isaacs}, $\frac{(q-1)C}{\gcd(q-1, c)}$ times the dimension of an irreducible constituent of this restriction must be divisible by $A$. Therefore, $[\frac{(q-1)C}{\gcd(q-1, c)}]^*\mathcal{G}_{j}$ is irreducible, and condition (a) of \cite[Theorem 6.8]{KT1} holds. 

By \autoref{pullback_iso} and \cite[Lemma 5.1]{KT1}, the geometric monodromy group of $[\frac{(q-1)C}{\gcd(q-1, c)}]^*\mathcal{G}_{0}$ can be embedded in $S_A$ in a way such that $[\frac{(q-1)C}{\gcd(q-1, c)}]^*\mathcal{G}_{0}$, as a representation of this group, is the restriction of the natural deleted permutation representation of $S_A$. Also, the image of $\gamma_0$ in this monodromy group has simple spectrum, and its order is the least common multiple of the orders of upstairs characters of $\mathcal{H}_{0}$, possibly divided by a divisor of $\frac{(q-1)C}{\gcd(q-1, c)}$. Since $\frac{(q-1)C}{\gcd(q-1, c)}$ and $A$ are relatively prime, the order of the image of $\gamma_0$ is $A$. We also know that the image of $P(\infty)$ in the geometric monodromy group is a $p$-subgroup of order at least $q^{n-1}$ by \cite[Proposition 4.10]{KT2}. Thus we have condition (b) of \cite[Theorem 6.8]{KT1}. 

\autoref{pullback_trace} shows that the trace plus $1$ is always a power of $q$, so condition (c) is also satisfied. \cite[Theorem 6.8]{KT1} now tells us that we must have $$\SL_n(\mathbb{F}_q)\cong G^{(\infty)} \lhd G\lhd \GL_n(\mathbb{F}_q),$$ and $\overline{\mathbb{Q}_\ell}\oplus[\frac{(q-1)C}{\gcd(q-1, c)}]^*\mathcal{W}$ as a representation of $G$ is the restriction of $\bigoplus_{j=0}^{q-2}\Weil_j$ to $G$. 
\end{proof}

\begin{thm}\label{Ggeom_of_sum}
In the situation of \autoref{pullback_group}, the geometric monodromy group $G=G_{\geom}$ of $\mathcal{W}$ is isomorphic to $\GL_n(\mathbb{F}_q)/\langle \alpha^d I\rangle$. 
\end{thm}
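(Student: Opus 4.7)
The plan is to upgrade the identification of $G' := G_{\geom}([\tfrac{(q-1)C}{\gcd(q-1,c)}]^*\mathcal{W})$ as a subgroup of $\GL_n(\mathbb{F}_q)$ (from \autoref{pullback_group}) into an identification of $G$ itself as a central quotient of $\GL_n(\mathbb{F}_q)$. Concretely, I would construct a surjective homomorphism $\pi\colon \GL_n(\mathbb{F}_q) \twoheadrightarrow G$ extending the inclusion $G' \hookrightarrow G$; its kernel must be central in $\GL_n(\mathbb{F}_q)$, hence of the form $\langle \alpha^d I\rangle$ for some $d \mid q-1$ to be determined.

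First I would check that the summands $\mathcal{G}_0, \ldots, \mathcal{G}_{q-2}$ of $\mathcal{W}$ are pairwise non-isomorphic geometrically irreducible sheaves. Irreducibility is inherited from \autoref{large_rank} and \autoref{small_rank}; non-isomorphism follows by comparing upstairs characters $\Char(A, \chi^{(b+c)j})$, using the standing hypotheses $\gcd(A, (q-1)/\gcd(q-1,c)) = 1$ and $bC-cB = 1$ to separate these sets for distinct $j \in \{0,\ldots,q-2\}$. Combined with \autoref{pullback_group}, which writes $\overline{\mathbb{Q}_\ell} \oplus [\tfrac{(q-1)C}{\gcd(q-1,c)}]^*\mathcal{W}$ as $\bigoplus_{k=0}^{q-2}\Weil_k$ on $G'$, this gives a bijection $\sigma\colon \{0,\ldots,q-2\} \to \{0,\ldots,q-2\}$ with $[\tfrac{(q-1)C}{\gcd(q-1,c)}]^*\mathcal{G}_j \cong \Weil_{\sigma(j)}$ as $G'$-modules (with $\Weil_0'$ in place of $\Weil_0$ in the slot matching the dimension $A-1$ summand $\mathcal{G}_0$). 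The permutation $\sigma$ is pinned down by matching central characters on $\mathbf{Z}(\GL_n(\mathbb{F}_q)) \cap G'$ against the spectrum data supplied by \autoref{m2sp_elts} and \autoref{gamma_0_gamma_infty}.

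Next I would construct $\pi$. Pick $\tilde g \in G$ projecting to a generator of the cyclic group $G/G'$; such a $\tilde g$ arises from a Galois deck transformation of the $[N]$-cover with $N = (q-1)C/\gcd(q-1,c)$. Because the $\mathcal{G}_j$ are pairwise non-isomorphic irreducible $G$-constituents, $\tilde g$ preserves each summand, and its restriction $\tilde g|_{\mathcal{G}_j}$ normalizes the image of $G'$ in $\GL(\Weil_{\sigma(j)})$. Since the normalizer of $\SL_n(\mathbb{F}_q)$'s image in $\GL(\Weil_{\sigma(j)})$ is the image of $\GL_n(\mathbb{F}_q)$ together with scalars (the relevant outer automorphisms being ruled out for deck transformations), $\tilde g|_{\mathcal{G}_j}$ is the action of some element of $\GL_n(\mathbb{F}_q)$ times a scalar. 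I would then show that these choices are consistent across all $j$, identifying $\tilde g$ with $\pi(\alpha^f I)$ for some $f$, and thereby defining $\pi$ on the generators $\SL_n(\mathbb{F}_q)$ and $\mathbf{Z}(\GL_n(\mathbb{F}_q))$ of $\GL_n(\mathbb{F}_q)$.

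Finally, the kernel of $\pi$ is central in $\GL_n(\mathbb{F}_q)$ and so equals $\langle \alpha^d I\rangle$ for the smallest positive $d$ such that $\alpha^d I$ acts trivially on every $\mathcal{G}_j$, i.e., such that $\lambda^{d\sigma(j)} = 1$ for all $j = 0, \ldots, q-2$. The main obstacle is the coherence step in the construction of $\pi$: verifying that the Schur-lemma-type scalars on the various $\mathcal{G}_j$ assemble into a genuine homomorphism rather than a projective one. This amounts to the vanishing of a class in $H^2(G/G', \mathbb{C}^\times)$, which is automatic since $G/G'$ is cyclic; however, matching the resulting scalars against the central characters of $\mathbf{Z}(\GL_n(\mathbb{F}_q))$ on each $\Weil_{\sigma(j)}$ requires careful bookkeeping using the Galois structure of $[N]$ and the trace-function identity of \autoref{direct_sum_trace}.
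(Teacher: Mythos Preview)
Your broad strategy---building a surjection $\GL_n(\mathbb{F}_q)\twoheadrightarrow G$ and identifying its central kernel---matches the paper's, but two steps fail. First, $\SL_n(\mathbb{F}_q)$ and $\mathbf{Z}(\GL_n(\mathbb{F}_q))$ do \emph{not} generate $\GL_n(\mathbb{F}_q)$: their product has index $\gcd(n,q-1)$, since $\det(\alpha I)=\alpha^n$. So even granting your identification of $\tilde g$ with $\pi(\alpha^f I)$, you would only have defined $\pi$ on a proper subgroup. Moreover that identification is itself unjustified: your own analysis shows $\tilde g$ acts on each $\mathcal G_j$ as some $h\in\GL_n(\mathbb{F}_q)$ times a scalar, but nothing forces the common class of $h$ in $\PGL_n(\mathbb{F}_q)$ to be trivial. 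The paper avoids this by working with $g_0$ (the image of $\gamma_0$) rather than a deck transformation: it first shows $G/\mathbf Z(G)\cong\PGL_n(\mathbb{F}_q)$ by checking that conjugation by $G$ fixes every $\Weil_j|_{\SL_n}$, then uses the determinant sheaf $\det\mathcal H_1\cong\mathcal L_{\chi^{b+c}}$ to see that $g_0 L$ generates $G/L$ with order exactly $d=(q-1)/\gcd(q-1,b+c)$, and finally defines $F\colon\GL_n(\mathbb{F}_q)\to G$ by sending $\diag(\alpha^{t_0},1,\dots,1)\mapsto g_0s_0^{-1}$ while fixing $\SL_n(\mathbb{F}_q)=L$ pointwise.

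Second, you never compute the specific value of $d$; your description ``the smallest $d$ with $\lambda^{d\sigma(j)}=1$ for all $j$'' is circular, since $\sigma$ encodes exactly the structure you are trying to determine. The paper's decisive input here is arithmetic: by Chebotarev density, any $z=(\epsilon_0 I,\dots,\epsilon_{q-2}I)\in\mathbf Z(G)$ is the Frobenius at some $u\in K^\times$, and the trace formulas of \autoref{small_trace} and \autoref{big_trace} then force $\epsilon_j=\chi_K(v)^j$ for every root $v$ of $u^{-b}v^A-v^{Cq^m}+u^{cq^m}$; the product of those $A$ roots is $(-1)^A u^{b+cq^m}$, from which one extracts $\epsilon_1^d=1$. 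Your $H^2(G/G',\mathbb C^\times)$ observation is correct for what it says, but it addresses a different obstruction and cannot substitute for this bound on $|\mathbf Z(G)|$.
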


\begin{proof}
Let $H$ be the geometric monodromy group of $[\frac{(q-1)C}{\gcd(q-1,c)}]^*\mathcal{W}$, so that $H\unlhd \langle g_0, H\rangle=G$. By \autoref{pullback_group}, $H$ is the image under the representation $\Weil_0'\oplus \bigoplus_{j=1}^{q-2}\Weil_j$ of a subgroup of $\GL_n(\mathbb{F}_q)$ containing $\SL_n(\mathbb{F}_q)$. Let $L:=E(H)=\SL_n(\mathbb{F}_q)= [H, H]$ be the quasisimple layer of $H$. Also for each $j=0,\dots,q-2$, let $G_j$ be the geometric monodromy group of $\mathcal{G}_j$.\\

(1) We first prove that $\mathbf{Z}(G)=\mathbf{C}_G(L)$.\\

Since $\mathcal{W}$ is the direct sum of irreducible representations $\mathcal{G}_j$ and each of them restricts to an irreducible representation of $L$, we can view $G$ and $L$ as subgroups of $\GL_{A-1}(\overline{\mathbb{Q}_\ell})\oplus\GL_{A}(\overline{\mathbb{Q}_\ell})\oplus\cdots\oplus\GL_{A}(\overline{\mathbb{Q}_\ell})$. By Schur's Lemma, we get $$\mathbf{C}_{\GL_{q^n-2}(\overline{\mathbb{Q}_\ell})}(L)=\mathbf{C}_{\GL_{q^n-2}(\overline{\mathbb{Q}_\ell})}(G) = \mathbf{Z}(\GL_{A-1}(\overline{\mathbb{Q}_\ell}))\oplus\mathbf{Z}(\GL_{A}(\overline{\mathbb{Q}_\ell}))\oplus\cdots\oplus\mathbf{Z}(\GL_{A}(\overline{\mathbb{Q}_\ell})).$$ Therefore $$\mathbf{C}_G(L) = \mathbf{C}_{\GL_{q^n-2}(\overline{\mathbb{Q}_\ell})}(L)\cap G =  \mathbf{C}_{\GL_{q^n-2}(\overline{\mathbb{Q}_\ell})}(G)\cap G = \mathbf{Z}(G).$$\\

(2) Next, we show that $G/\mathbf{Z}(G)\cong \PGL_n(\mathbb{F}_q)$. \\

Since $L$ is normal in $G$, the restriction of the monodromy representation of each $\mathcal{G}_j$ to $L$ is invariant under conjugation by elements of $G$. We already saw that they are precisely $\Weil_0'$ and $\Weil_j$, $j=1,\dots,q-2$. The only automorphisms of $\SL_n(\mathbb{F}_q)$ fixing each of these representations are the inner and diagonal automorphisms. Therefore we get $$G/\mathbf{Z}(G)=G/\mathbf{C}_G(L)\leq \PGL_n(\mathbb{F}_q).$$ 

On the other hand, the geometric monodromy group $G_j$ is the image of $G$ under the projection from $\GL_{A-1}(\overline{\mathbb{Q}_\ell})\oplus \GL_A(\overline{\mathbb{Q}_\ell})\oplus \cdots \oplus \GL_A(\overline{\mathbb{Q}_\ell})$ onto the $j$th summand. Therefore, $G_j$ is a quotient of $G$, and $G_j$ contains the image of $\SL_n(\mathbb{F}_q)$ acting on an irreducible Weil representation. In particular, $G_j$ is finite, almost quasisimple with unique nonabelian composition factor $\PSL_n(\mathbb{F}_q)$, so by \autoref{basic_facts}(d), $G_j/\mathbf{Z}(G_j)\cong \PGL_n(\mathbb{F}_q)$, except for the exceptional pairs $(n,q)=(3,2),(3,3), (3,4)$ of \autoref{basic_facts}(d). For $(n,q)=(3,2)$ and $(3,3)$, we have $\PGL_n(\mathbb{F}_q) = \PSL_n(\mathbb{F}_q)$, so we still get $G_j/\mathbf{Z}(G_j)\cong \PGL_n(\mathbb{F}_q)$. Since we already know that $G/\mathbf{Z}(G)\leq \PGL_n(\mathbb{F}_q)$, we get $G/\mathbf{Z}(G)\cong \PGL_n(\mathbb{F}_q)$.

For $(n,q)=(3,4)$, $A=21$ is divisible by $|\mathbb{F}_q^\times|=q-1=3$, so by \autoref{m2sp_elts}, no $p'$-element (in fact, no element at all) of $\SL_n(\mathbb{F}_q)$ has simple spectrum, which shows that $G_j/\mathbf{Z}(G_j)\neq \PSL_3(\mathbb{F}_4)$. Therefore, $G/\mathbf{Z}(G)$ is also not equal to $\PSL_3(\mathbb{F}_q)$. Since $\PGL_3(\mathbb{F}_4)/\PSL_3(\mathbb{F}_4)$ is simple and $G/\mathbf{Z}(G)\leq \PGL_3(\mathbb{F}_4)$, the equality must hold. \\

(3) We compute the order of $\mathbf{Z}(G)$. \\

Let $z\in \mathbf{Z}(G)$. By (1), $$z=(\epsilon_0 I, \epsilon_1 I, \dots, \epsilon_{q-2}I)\in \GL_{A-1}(\overline{\mathbb{Q}_\ell})\oplus\GL_{A}(\overline{\mathbb{Q}_\ell})\oplus\cdots\oplus\GL_{A}(\overline{\mathbb{Q}_\ell})$$ for some roots of unity $\epsilon_j\in \overline{\mathbb{Q}_\ell}^\times$, where $I$ denotes the identity matrix of appropriate size. Thus for each $j$, the trace of the action of $z$ on $\mathcal{G}_j$ is just $(A-\delta_{0,j})\epsilon_j$. 

Let $K$ be a finite extension of $\mathbb{F}_q$ on which $\mathcal{W}$ is defined and such that $[K:\mathbb{F}_q]$ is even, so that $\Norm_{K/\mathbb{F}_q}(-1)=1$. Since $\mathcal{G}_j$ is pure of weight $0$ and its geometric monodromy group $G_j$ is finite, it follows that the arithmetic monodromy group $G_{j,K}$ over $K$ of $\mathcal{G}_j$ is also finite, and hence the arithmetic monodromy group $G_K$ of $\mathcal{W}$ is also finite. By Chebotarev density theorem, every element of $G_K$ comes from a Frobenius in the arithmetic fundamental group. In particular, $z$ is the image of the Frobenius at some $u\in K^\times$.

The trace of the action of $z$ on $\mathcal{G}_j$, which is $(A-\delta_{0,j})\epsilon_j$, is also the value at $u$ of the trace functions we computed in \autoref{small_trace} and \autoref{big_trace}: $$(A-\delta_{0,j})\epsilon_j = -\delta_{0,j} + \sum_{\substack{v\in K^\times\\ u^{-b}v^A - v^{Cq^m} + u^{cq^m}=0}}\chi_K(v^j).$$ Note that there are at most $A$ elements $v\in K^\times$ satisfying $u^{-b}v^A-v^{Cq^m}+u^{cq^m}=0$, and each $\chi_K(v^j)$ is a root of unity. Therefore, the above equality forces that there are precisely $A$ such $v$ in $K^\times$ and that $\chi_K(v)^j = \epsilon_j$ for all such $v$. In particular, $\epsilon_1$ is a $(q-1)$th root of unity and $\epsilon_j = \epsilon_1^j$ for all $j$. Also, all such $v$ has the same $\Norm_{K/\mathbb{F}_q}(v)$, which is precisely the inverse image $\nu\in \mathbb{F}_q^\times$ of $\epsilon_1$ under $\chi$. 

Since $v$ are the roots of the polynomial $u^{-b}v^A-v^{Cq^m}+u^{cq^m}$, the product of all $v$ is exactly $(-1)^Au^{b+cq^m}$. Hence 
\begin{align*}
\nu^n&=\nu^A =\prod_{\substack{v\in K^\times\\u^{-b}v^{A}-v^{Cq^m}+u^{cq^m}=0}}\Norm_{K/\mathbb{F}_q}(v) =\Norm_{K/\mathbb{F}_q}(\prod_{v}(v)) \\&=\Norm_{K/\mathbb{F}_q}(  (-1)^Au^{b+cq^m})=\Norm_{K/\mathbb{F}_q}(u)^{b+c}
\end{align*}
where the last equality follows from $\Norm_{K/\mathbb{F}_q}(-1)=1$.

Let $d = \frac{q-1}{\gcd(q-1,b+c)}$, which is the order of $b+c\in \mathbb{Z}/(q-1)\mathbb{Z}$. Since $\nu\in \mathbb{F}_q^\times$ has order dividing $q-1$, we have
\begin{align*}
\nu^d = \nu^{d(bC-cB)} = \nu^{d(b(n-m)-cm)} = \nu^{bdn - (b+c)dm} = \nu^{bdn} = \Norm_{K/\mathbb{F}_q}(u)^{bd(b+c)}=1.
\end{align*}
Therefore $\epsilon_1=\chi(\nu)$ has order dividing $d$, so it must be a power of $\lambda^{b+c}$. Since this holds for all $z\in \mathbf{Z}(G)$, we get
$$\mathbf{Z}(G)\leq \langle (I, \lambda^{b+c}I, \lambda^{2(b+c)}I, \dots, \lambda^{(q-2)(b+c)}I)\rangle \cong \mathbb{Z}/d\mathbb{Z}.$$

Recall that the geometric determinant of $\mathcal{H}_{1}$ is $\mathcal{L}_{\chi^{b+c}}$ if $A$ is odd, and $\mathcal{L}_{\chi^{b+c}\chi_2}$ if $A$ is even. Also note that if $A$ is even, then $d$ is also even: we know that $bC-cB=1$, so that $bn-(b+c)m\equiv 1$ mod $q-1$. Since $A$ is even if and only if $n$ is even and $q$ is odd, it follows that $(b+c)m$ is odd, whence $d$ is even. Therefore, if $A$ is even, then we can choose a $j_0$ such that $\chi^{(b+c)j_0}=\chi_2$. Then the geometric determinant of $\mathcal{H}_{1+j_0}$ is $\mathcal{L}_{\chi^{b+c}}$. In particular, $G$ has $\mathbb{Z}/d\mathbb{Z}$ as a quotient. Since $L$ is perfect, this quotient map factors through $G/L$. Since $G/\mathbf{Z}(G)\cong \PGL_n(\mathbb{F}_q)$ and $|L|=|\SL_n(\mathbb{F}_q)|=|\PGL_n(\mathbb{F}_q)|$, we have $|\mathbf{Z}(G)|= |G/L|$. Therefore, $|\mathbf{Z}(G)|$ is divisible by $d$. Together with the above observations, this implies $\mathbf{Z}(G) \cong \mathbb{Z}/d\mathbb{Z}$ and $G/L \cong \mathbb{Z}/d\mathbb{Z}$.\\

(4) Now we prove that $G\cong \GL_n(\mathbb{F}_q)/\langle \alpha^d I \rangle$.\\

To prove this, we will find a surjective group homomorphism $F:\GL_n(\mathbb{F}_q) \rightarrow G$ with kernel $\langle \alpha^d I \rangle$. Since we already embedded these groups into $\GL_{q^n-2}(\overline{\mathbb{Q}_\ell})$ in a way such that $\SL_n(\mathbb{F}_q) = L$, we only need to extend this to $\GL_n(\mathbb{F}_q) = \langle \diag(\alpha,1,\dots, 1)\rangle \SL_n(\mathbb{F}_q)$. 

As we saw above, $\mathcal{L}_{\chi^{b+c}}$ is the geometric determinant of some $\mathcal{G}_j$. We can view this as a one-dimensional representation of $G$. Since $L\leq [G,G]$, $L$ lies in the kernel of this representation. Also, the image of $g_0$ under this representation has order equal to the order of this representation, which is $d=|G/L|$. Therefore $G=\langle g_0,L\rangle$. 

Since $g_0L$ generates $G/L$, $g_0\mathbf{Z}(G)L$ generates $G/\mathbf{Z}(G)L$. Note that \begin{align*}
G/\mathbf{Z}(G)L &\cong (G/\mathbf{Z}(G))/(\mathbf{Z}(G)L/\mathbf{Z}(G))\cong \PGL_n(\mathbb{F}_q)/\PSL_n(\mathbb{F}_q) \cong \GL_n(\mathbb{F}_q) / (\mathbf{Z}(\GL_n(\mathbb{F}_q))\SL_n(\mathbb{F}_q)).
\end{align*} 
Each generator of this quotient group is of the form $$\diag(\alpha^{t},1,\dots,1)\mathbf{Z}(\GL_n(\mathbb{F}_q))\SL_n(\mathbb{F}_q)$$ for some $t\in \mathbb{Z}$ relatively prime to $q-1$. Therefore, we can choose an integer $t_0$ relatively prime to $q-1$ and elements $z_0\in \mathbf{Z}(\GL_n(\mathbb{F}_q))$, $s_0\in \SL_n(\mathbb{F}_q)$ such that $$g_0 = z_0h_0s_0, \text{ where }h_0=\diag(\alpha^{t_0},1,\dots, 1).$$ 

Let $F:\GL_n(\mathbb{F}_q) \rightarrow  G$ be the map defined as $$ F(h_0^t s) = (g_0s_0^{-1})^ts= z_0^th_0^ts\text{ for each }t\in \mathbb{Z}\text{ and }s\in \SL_n(\mathbb{F}_q).$$ I claim that this map has the desired properties. First, we check the well-definedness: if $t_1, t_2\in \mathbb{Z}$ and $s_1, s_2\in \SL_n(\mathbb{F}_q)$ are such that $h_0^{t_1}s_1 = h_0^{t_2}s_2$, then $\diag(\alpha^{t_0(t_1-t_2)},1,\dots, 1) = h_0^{t_1-t_2} = s_2s_1^{-1} \in \SL_n(\mathbb{F}_q)$, so $t_1-t_2$ must be divisible by $q-1$. Then $z_0^{t_1-t_2}=1$, so
\begin{align*}
F(h_0^{t_1}s_1) &= z_0^{t_1} h_0^{t_1} s_1 =z_0^{t_2-t_1} z_0^{t_1}h_0^{t_2}s_2 = z_0^{t_2}h_0^{t_2}s_2 =F(h_0^{t_2}s_2).
\end{align*}
Therefore, $F$ is well-defined. We next check that it is a group homomorphism: for $t_1,t_2\in \mathbb{Z}$ and $s_1,s_2\in \SL_n(\mathbb{F}_q)$, we have
\begin{align*}
F(h_0^{t_1}s_1h_0^{t_2}s_2) &= F(h_0^{t_1+t_2}(h_0^{-t_2}s_1h_0^{t_2}s_2)) = z_0^{t_1+t_2}h_0^{t_1+t_2}(h_0^{-t_2}s_1h_0^{t_2}s_2) \\&= z_0^{t_1}z_0^{t_2}h_0^{t_1}s_1h_0^{t_2}s_2= z_0^{t_1}h_0^{t_1}s_1z_0^{t_2}h_0^{t_2}s_2= F(h_0^{t_1}s_1)F(h_0^{t_2}s_2).
\end{align*}
Since $G = \langle g_0, L\rangle = \langle g_0s_0^{-1}, L\rangle$, every element of $G$ can be written in the form $(g_0s_0^{-1})^t s$ for some $t\in \mathbb{Z}$ and $s\in L=\SL_n(\mathbb{F}_q)$. Therefore $F$ is surjective. 

Finally, we check that $\ker F=\langle \alpha^{d}I\rangle$. If $h_0^ts\in \ker F$ for $t\in \mathbb{Z}$ and $s\in \SL_n(\mathbb{F}_q)$, then $$ (g_0s_0^{-1})^t= (g_0s_0^{-1})^t s s^{-1}  = F(h_0^t s)s^{-1} = s^{-1}\in L.$$ Since $G=\langle g_0s_0^{-1}, L\rangle$, the element $(g_0s_0^{-1})L\in G/L$ has order exactly $d$. Hence $d$ must divide $t$, so we may write $t=dt'$ for some $t'\in \mathbb{Z}$. Then 
\begin{align*}
h_0^t s &= z_0^{-t}(g_0s_0^{-1})^{t} s = z_0^{-dt'}F(h_0^t s) = z_0^{-dt'}\in \mathbf{Z}(\GL_n(\mathbb{F}_q))=\langle \alpha I\rangle.
\end{align*}
Since $z_0^d$ has order dividing $\frac{q-1}{d}$, it follows that $z_0^{-dt'}\in \langle \alpha^{d}I\rangle$, so that $\ker F \leq \langle \alpha^d I\rangle$. Also, $$|\ker F| = \frac{|\GL_n(\mathbb{F}_q)|}{|G|} = \frac{(q-1)|\SL_n(\mathbb{F}_q)|}{d|L|} = \frac{q-1}{d} = |\langle \alpha^d I\rangle|.$$ Therefore $\ker F=\langle \alpha^d I\rangle$, so $\GL_n(\mathbb{F}_q)/\langle \alpha^d I\rangle \cong  G$.
\end{proof}

To determine the geometric monodromy groups of the summands $\mathcal{G}_j$, we need to know what are the irreducible constituents of the monodromy representation of $\mathcal{W}$ as a representation of $\GL_n(\mathbb{F}_q)$. Motivated by \cite[Theorem 16.6]{KT3} for $\SU_n(\mathbb{F}_q)$ and its extension \cite[Theorem 1.2]{TY} to $\GL_n(\mathbb{F}_q)$, we will prove an analogous result for $\GL_n(\mathbb{F}_q)$ and use it to study this representation. By a slight abuse of notation, we will also denote by $\Weil$ and $\Weil_j$ the representations of $\GL_n(\mathbb{F}_q)$ corresponding to these modules. 

\begin{thm}\label{twisted_Weil}
 Suppose that a complex representation $\Phi$ of $\GL_n(\mathbb{F}_q)$ has the following properties:
\begin{enumerate}[label={\upshape{(\alph*)}}]
\item $\Weil_0 = \Weil_0'\oplus \mathds{1}$ is isomorphic to a subrepresentation of $\Phi$,
\item $\Phi|_{\SL_n(\mathbb{F}_q)} \oplus \mathds{1} \cong \Weil|_{\SL_n(\mathbb{F}_q)}$, and
\item The values of the character afforded by $\Phi\oplus \mathds{1}$ are in $\{1,q,q^2,\dots, q^n\}$.
\end{enumerate}
Then $\Phi = \bigoplus_{j=0}^{q-2} \Weil_j\otimes X^{ej}$ for some integer $e$, where $X:= \GL_n(\mathbb{F}_q) \xrightarrow{\det} \mathbb{F}_q^\times \xrightarrow{\alpha\mapsto \lambda} \mathbb{C}^\times$ is a one-dimensional representation of order $q-1$. 
\end{thm}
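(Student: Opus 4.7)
My plan is to first use (a) and (b) to decompose $\Phi$, then use (c) to identify the precise isomorphism class. By (a) and (b), I can write $\Phi = \Weil_0 \oplus \Psi$ where $\Psi|_{\SL_n(\mathbb{F}_q)} \cong \bigoplus_{j=1}^{q-2}\Weil_j|_{\SL_n(\mathbb{F}_q)}$. For $n \geq 3$ each $\Weil_j|_{\SL_n(\mathbb{F}_q)}$ with $j \neq 0$ is $\SL_n(\mathbb{F}_q)$-irreducible: by Frobenius reciprocity together with the fact that the Gallagher twists $\Weil_j \otimes X^\epsilon$ ($\epsilon = 0, \dots, q-2$) are pairwise non-isomorphic $\GL_n(\mathbb{F}_q)$-irreducibles (distinguishable by their character on $\diag(\alpha,1,\dots,1)$), one obtains $\langle \Weil_j|_{\SL_n(\mathbb{F}_q)}, \Weil_j|_{\SL_n(\mathbb{F}_q)}\rangle_{\SL_n(\mathbb{F}_q)} = 1$. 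Since $\GL_n(\mathbb{F}_q)/\SL_n(\mathbb{F}_q) \cong \mathbb{F}_q^\times$ is cyclic, Gallagher's theorem then tells me every irreducible constituent of $\Psi$ is of the form $\Weil_j \otimes X^{\epsilon_j}$ for some $j \in \{1,\dots,q-2\}$ and $\epsilon_j \in \mathbb{Z}/(q-1)$.

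Next, I will verify directly that the candidate $\Omega_e := \bigoplus_{j=0}^{q-2}\Weil_j \otimes X^{ej}$ satisfies (a), (b), (c). The key computation uses the projection formula $\chi_{\Weil_j}(g) = \frac{1}{q-1}\sum_{i=0}^{q-2}\lambda^{-ij}q^{\dim\ker(\alpha^i g - I)}$ (for $j \neq 0$) and orthogonality of characters of $\mathbb{F}_q^\times$, which collapses the double sum to the single index $i \equiv e\log_\alpha \det g \pmod{q-1}$, yielding the closed form
\[
\chi_{\Omega_e \oplus \mathds{1}}(g) \;=\; q^{\dim\ker((\det g)^e g - I)},
\]
which lies in $\{1,q,\dots,q^n\}$ as required.

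For the converse, I evaluate $\chi_{\Phi \oplus \mathds{1}}$ at central elements $\alpha^s I$. Writing each constituent of $\Psi$ as $\Weil_j \otimes X^{\epsilon_j}$ of central character $\lambda^{k_j}$ with $k_j := j+n\epsilon_j$, I obtain
\[
\chi_{\Phi \oplus \mathds{1}}(\alpha^s I) \;=\; 1 + A\Big(1 + \sum_j \lambda^{sk_j}\Big), \qquad A := (q^n-1)/(q-1).
\]
An algebraic-integrality argument rules out intermediate values: for $0 < m < n$, having $\chi_{\Phi \oplus \mathds{1}}(\alpha^s I) = q^m$ would force $\sum_j \lambda^{sk_j}$ to equal the rational number $(q^m - 1 - A)/A \in (-1, 0)$, contradicting its being a sum of roots of unity (hence an algebraic integer). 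So $\sum_j \lambda^{sk_j} \in \{-1, q-2\}$ for every $s$, and a discrete Fourier transform of this relation—combined with nonnegativity of the multiplicities $N_k := |\{j : k_j \equiv k\}|$ and the Clifford constraint on their distribution mod $d := \gcd(n,q-1)$ (which must match the multiplicities of $j \bmod d$ in $\{1,\dots,q-2\}$)—forces the multiset $\{k_j \bmod (q-1)\}$ to coincide with $\{j(1+ne) \bmod (q-1) : j = 1, \dots, q-2\}$ for some integer $e$.

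The step I expect to be the main obstacle is the final identification of $\Psi$ with the specific $\Psi_e := \bigoplus_{j=1}^{q-2}\Weil_j \otimes X^{ej}$: when $d > 1$, Gallagher's theorem yields $d$ distinct $\GL_n(\mathbb{F}_q)$-extensions of $\Weil_j|_{\SL_n(\mathbb{F}_q)}$ sharing a common central character (namely $\Weil_j \otimes X^{\epsilon_j + t(q-1)/d}$ for $t=0,\dots,d-1$), so the central-character data alone does not determine $\Psi$. To resolve this I would evaluate $\chi_{\Phi \oplus \mathds{1}}$ on diagonal elements $\diag(\alpha^t, 1, \dots, 1)$ lying in the nontrivial cosets of $\SL_n(\mathbb{F}_q) \cdot Z(\GL_n(\mathbb{F}_q))$ in $\GL_n(\mathbb{F}_q)$, and reapply the same power-of-$q$ plus algebraic-integrality constraint to eliminate all non-linear choices of $\epsilon_j$, fixing $\epsilon_j \equiv ej \pmod{q-1}$ for a common $e$.
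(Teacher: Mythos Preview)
Your opening reduction to $\Phi \cong \bigoplus_{j=0}^{q-2}\Weil_j\otimes X^{\epsilon_j}$ with $\epsilon_0=0$ matches the paper exactly, and your central-element computation is correct: it pins down the multiset $\{k_j=j+n\epsilon_j\}$ and suffices when $\gcd(n,q-1)=1$. The genuine gap is your step for $d=\gcd(n,q-1)>1$. The diagonal element $g=\diag(\alpha^t,1,\dots,1)$ has a fixed hyperplane, so by \autoref{equiv_relation} its trace on $\Weil_j$ is $\lambda^{tj}+\frac{q^{n-1}-1}{q-1}$, not a single root of unity. Hence
\[
\chi_{\Phi\oplus\mathds{1}}(g)=1+\sum_{j}\lambda^{t(j+\epsilon_j)}+\frac{q^{n-1}-1}{q-1}\sum_j\lambda^{t\epsilon_j},
\]
which entangles two independent exponential sums of size $q-1$. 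Your proposed ``same power-of-$q$ plus algebraic-integrality constraint'' no longer applies cleanly: the value can range over several powers of $q$, the rationality argument does not isolate either sum, and you get no direct handle on $\epsilon_j$ separately from $j+\epsilon_j$. You have not indicated how to untangle this, and I do not see a straightforward way.

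The paper's proof avoids this by choosing test elements with a \emph{unique} eigenline in $\mathbb{F}_q^n$. Concretely it takes
\[
x_{b',c'}=\begin{pmatrix}\alpha^{b'}&0\\0&\alpha_{n-1}^{c'-b'}\end{pmatrix}\in\GL_1(\mathbb{F}_q)\oplus\GL_{n-1}(\mathbb{F}_q),
\]
where $\alpha_{n-1}$ is a Singer generator of $\mathbb{F}_{q^{n-1}}^\times$; then $\chi_{\Weil_j}(x_{b',c'})=\lambda^{b'j}$ exactly (no hyperplane term), while $\det x_{b',c'}=\alpha^{c'}$. This decouples the two exponents: one obtains $1+\sum_j\lambda^{b'j+c'\epsilon_j}\in\{1,q\}$ for \emph{independently chosen} $b',c'$, and the paper finishes by invoking \cite[Theorem 3.1 and Lemma 3.2]{TY} to force $\epsilon_j\equiv ej$. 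The use of a Singer element in the $(n-1)$-block, together with the cited combinatorial lemma, is the missing idea in your proposal.
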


\begin{proof}
For each integer $b', c'$ such that $b'\not\equiv c'$ mod $\frac{q^{n-1}-1}{q-1}$, let $$x_{b', c'} :=\begin{pmatrix}\alpha^{b'} & 0\\0 & \alpha_{n-1}^{c'-b'}\end{pmatrix}\in \GL_1(\mathbb{F}_q)\oplus \GL_{n-1}(\mathbb{F}_q) < \GL_n(\mathbb{F}_q).$$ If $c'-b'$ is divisible by $\frac{q^{n-1}-1}{q-1}$, then instead define $$x_{b', c'} :=\begin{pmatrix}\alpha^{b'} & 0\\0 & \alpha_{n-1}^{c'-b'+q-1}\end{pmatrix}\in \GL_1(\mathbb{F}_q)\oplus \GL_{n-1}(\mathbb{F}_q) < \GL_n(\mathbb{F}_q).$$ Since $n\geq 3$, we have $\frac{q^{n-1}-1}{q-1}\geq \frac{q^2-1}{q-1}=q+1>q-1$, so if $c'-b'$ is divisible by $\frac{q^{n-1}-1}{q-1}$ then $c'-b'+q-1$ is not. Recall that $\alpha_{n-1}$ permutes all $q^{n-1}-1$ nonzero vectors of $\mathbb{F}_q^{n-1}$ cyclically, and that $\alpha_{n-1}^{\frac{q^{n-1}-1}{q-1}}=\alpha I$ acts on $\mathbb{F}_q^{n-1}$ as a scalar. Therefore, the lower diagonal block of $x_{b',c'}$ cannot have any eigenvector in $\mathbb{F}_q^{n-1}$. It follows that the only eigenvectors of $x_{b',c'}$ in $\mathbb{F}_q^n$ are those of the form $(u, 0)\in \mathbb{F}_q\oplus \mathbb{F}_q^{n-1}$ for nonzero $u\in \mathbb{F}_q$, and they have eigenvalue $\alpha^{b'}$. 

\autoref{equiv_relation} tells us that the spectrum of the action of $x_{b',c'}$ on $\Weil_j$ can be partitioned into subsets, and each of this subsets is, in the notations of section $2$, the set of all $s_v$th roots of $\lambda^{t_vj}$, where $v$ is a representative of the $\sim_{x_{b',c'}}$-equivalence class corresponding to this subset. The sum of the eigenvalues in a such subset is $0$ unless $s_v=1$, in which case the sum is simply $\lambda^{t_v j}$. But $s_v=1$ means that $v$ is an eigenvectors of $x_{b',c'}$, and as we saw above, the only eigenvectors of $x_{b',c'}$ are those of the form $(u,0)\in \mathbb{F}_q\oplus \mathbb{F}_q^{n-1}$ with $u\neq 0$. Therefore, the trace of the action of $x_{b',c'}$ on $\Weil_j$ is $\lambda^{b'j}$.

The determinant of $x_{b',c'}$ is $$\det x_{b',c'} = \alpha^{b'} N_{\mathbb{F}_{q^{n-1}}/\mathbb{F}_q}(\alpha_{n-1}^{c'-b'}) =  \alpha^{b'} (\alpha_{n-1}^{c'-b'})^{\frac{q^{n-1}-1}{q-1}} =\alpha^{b' + (c'-b')} = \alpha^{c'}$$ if $c'\not\equiv b'$ mod $\frac{q^{n-1}-1}{q-1}$, and for the other cases we also get $$\det x_{b',c'} = \alpha^{b'} N_{\mathbb{F}_{q^{n-1}}/\mathbb{F}_q}(\alpha_{n-1}^{c'-b'+q-1})  =\alpha^{b' + (c'-b' + q -1)} = \alpha^{c'}.$$ Thus we get $X(x_{b',c'}) =\lambda^{c'}$.

Since $\Phi|_{\SL_n(\mathbb{F}_q)} \oplus \mathds{1} \cong \Weil|_{\SL_n(\mathbb{F}_q)}$, and $\Phi$ has a subrepresentation isomorphic to $\Weil_0$, we must have $$\Phi \cong \Weil_0 \oplus \bigoplus_{j=1}^{q-2} \Weil_j\otimes X^{i_j}=\bigoplus_{j=0}^{q-2} \Weil_j\otimes X^{i_j}$$ for some integers $i_j$ with $i_0=0$. Then by the assumptions and the above calculations, for each pair of integers $b', c'$ we get
\begin{align*}
1 + \Trace \Phi(x_{b',c'}) =1 + \sum_{j=0}^{q-2} \lambda^{b'j + c'i_j} \in \{1,q,\dots, q^n\}.
\end{align*}
This is a sum of $q$ roots of unity, so this is actually in $\{1, q\}$. 

For each $b',c'\in \mathbb{Z}$ and $j\in \{0,\dots,q-2\}$, let $d_{b',c',j}$ be the unique integer in $\{0,\dots, q-2\}$ such that $d_{b',c',j} \equiv b'j + c'i_j$ mod $q-1$. Consider the polynomial $$P_{b', c'}(T) := \sum_{j=0}^{q-2}T^{d_{b',c',j}} \in \mathbb{Z}[T].$$ Then every coefficient is nonnegative, and the constant term is positive since $d_{b',c',0} = 0$. Clearly $P_{b',c'}(1) = q-1$, and for each integer $r$, we have $P_{b',c'}(\lambda^r)\in \{0,q-1\}$. By \cite[Theorem 3.1 and Lemma 3.2]{TY}, there exists $e\in \mathbb{Z}$ such that $i_j \equiv ej$ mod $q-1$ for all $j=0,\dots,q-2$. Since $X$ has order $q-1$, we get $$\Phi \cong \bigoplus_{j=0}^{q-2} \Weil_j\otimes X^{ej}.$$

\end{proof}

\begin{crl}\label{Ggeom_of_constituents}
The monodromy representation of $\mathcal{G}_0$ as a representation of $\GL_n(\mathbb{F}_q)$ is $\Weil_0'$. The set of monodromy representations of $\mathcal{G}_j$ for $j=1,\dots,q-2$ is equal to the set $\{\Weil_j \otimes X^{ej}\mid j=1,\dots, q-2\}$, where $e$ is an integer such that $ne+1$ has order exactly $d$ in $\mathbb{Z}/(q-1)\mathbb{Z}$, and $X$ is as in \autoref{twisted_Weil}. 
\end{crl}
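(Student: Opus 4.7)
The plan is to apply \autoref{twisted_Weil} to the representation $\Phi := \mathcal{W} \oplus \mathds{1}$, viewed as a $\GL_n(\mathbb{F}_q)$-representation via the surjection $F\colon \GL_n(\mathbb{F}_q) \twoheadrightarrow G$ provided by \autoref{Ggeom_of_sum}. Hypothesis (b) is immediate from \autoref{pullback_group}: setting $L = \SL_n(\mathbb{F}_q)$, we have $\mathcal{W}|_L \cong \Weil_0' \oplus \bigoplus_{j=1}^{q-2}\Weil_j$, so $\Phi|_L \oplus \mathds{1} \cong \Weil|_L$. Hypothesis (c) follows from \autoref{direct_sum_trace}: the trace of $\Phi \oplus \mathds{1} = \mathcal{W} \oplus \mathds{1}^{\oplus 2}$ at each $u \in K^\times$ equals $|V_u|$, where $V_u := \{w \in K : u^b w^{q^m} - u^{b+cq^m}w^{q^n} = w\}$, and since $V_u$ is the kernel of an $\mathbb{F}_q$-linear additive polynomial of $q$-degree $n$, it is an $\mathbb{F}_q$-subspace of $K$ of $\mathbb{F}_q$-dimension at most $n$, so $|V_u| \in \{1, q, q^2, \ldots, q^n\}$.

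The delicate point is hypothesis (a): $\Weil_0 = \Weil_0' \oplus \mathds{1}$ must appear as a subrepresentation of $\Phi$. The $\mathds{1}$ summand is the trivial representation added to form $\Phi$. For the $\Weil_0'$ summand, note that $\mathcal{G}_0$ is the unique constituent of $\mathcal{W}$ whose restriction to $L$ matches $\Weil_0'|_L$ (by dimension), so $\mathcal{G}_0 \cong \Weil_0' \otimes X^{r_0}$ as $\GL_n(\mathbb{F}_q)$-representations for some $r_0 \in \mathbb{Z}/(q-1)\mathbb{Z}$, and the task is to force $r_0 = 0$. The plan is to combine \autoref{pullback_iso}, which identifies $[N]^*\mathcal{G}_0 \oplus \mathds{1}$ geometrically with $[\frac{q-1}{\gcd(q-1, c)}]^* f_*\overline{\mathbb{Q}_\ell}$ for $f(t) = t^B - t^A$, with the Clifford-theoretic and permutation-representation arguments used in \autoref{pullback_group} (via \cite[Theorem 6.8]{KT1}). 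This identifies $[N]^*\mathcal{G}_0$ with the restriction of the \emph{untwisted} $\Weil_0'$ to the pullback's monodromy subgroup $H \supseteq L$; combining this with the geometric determinant computation for $\mathcal{G}_0$ afforded by \eqref{geom_det} (which yields a character of order at most $2$) pins the remaining twist down to $r_0 = 0$.

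With the three hypotheses verified, \autoref{twisted_Weil} yields $\Phi \cong \bigoplus_{j=0}^{q-2}\Weil_j \otimes X^{ej}$ for some $e \in \mathbb{Z}$. Matching this decomposition with $\Phi = \mathds{1} \oplus \mathcal{G}_0 \oplus \bigoplus_{j=1}^{q-2}\mathcal{G}_j$: the $j=0$ summand $\Weil_0 = \Weil_0' \oplus \mathds{1}$ corresponds to $\mathcal{G}_0 \oplus \mathds{1}$, confirming $\mathcal{G}_0 \cong \Weil_0'$; the nonzero summands $\{\Weil_j \otimes X^{ej}\}_{j=1}^{q-2}$ then coincide as a set with $\{\mathcal{G}_j\}_{j=1}^{q-2}$. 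Finally, by \autoref{Ggeom_of_sum} the kernel of $\Phi$ restricted to $\langle \alpha I\rangle$ equals $\langle \alpha^d I\rangle$; since $\alpha^t I$ acts on $\Weil_j \otimes X^{ej}$ as the scalar $\lambda^{tj(1+ne)}$, the intersection over $j = 1, \ldots, q-2$ of the kernels in $\langle \alpha I\rangle$ is $\langle \alpha^{(q-1)/\gcd(q-1, 1+ne)}I\rangle$, giving $\gcd(q-1, 1+ne) = (q-1)/d$ and hence that $1 + ne$ has order exactly $d$ in $\mathbb{Z}/(q-1)\mathbb{Z}$. The main obstacle is the verification of hypothesis (a), specifically ruling out nontrivial twists $r_0 \neq 0$.
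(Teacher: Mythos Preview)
Your overall strategy matches the paper's: verify hypotheses (a), (b), (c) of \autoref{twisted_Weil} for $\Phi=\mathds{1}\oplus\mathcal{W}$ and then read off both the identification of the constituents and the order of $ne+1$ from the central action. Your checks of (b), (c), and the final order computation are essentially correct (for (c) one should add the Chebotarev/purity step from the proof of \autoref{Ggeom_of_sum}(3) to pass from trace-function values at Frobenii to all character values on $G$).

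The genuine gap is your verification of (a). You try to kill the twist $r_0$ by combining two ingredients, but neither does what you need. First, the identification from \autoref{pullback_group} realizes $[N]^*\mathcal{G}_0$ as $\Weil_0'|_H$ \emph{under the embedding of $H$ into $\GL_n(\mathbb{F}_q)$ produced there}; however, the $\GL_n(\mathbb{F}_q)$-structure on $\mathcal{W}$ you are using comes from the surjection $F$ of \autoref{Ggeom_of_sum}, and $F$ is only pinned down on $L=\SL_n(\mathbb{F}_q)$ by construction. There is no reason these two embeddings of $H$ agree outside $L$, so the conclusion $X^{r_0}|_H=\mathds{1}$ is not justified. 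Second, the determinant constraint from \eqref{geom_det} only gives that $\det(\Weil_0')\cdot X^{r_0(A-1)}$ has order at most $2$; since $A-1\equiv n-1\pmod{q-1}$, this amounts to $(q-1)\mid 2r_0(n-1)$, which in general leaves many nonzero $r_0$ (e.g.\ $n=3$, $q=5$ imposes nothing).

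The paper resolves (a) by a direct spectral comparison at $g_0$, using the fact established in \autoref{Ggeom_of_sum} that $G=\langle g_0,L\rangle$. Since $g_0$ has simple spectrum on $\mathcal{G}_0\cong\Weil_0'\otimes Y$, it has simple spectrum on $\Weil_0'$, so by \autoref{m2sp_elts}(a) the image of $g_0$ in $\GL_n(\mathbb{F}_q)/\langle\alpha^dI\rangle$ is that of some $\alpha_n^a$; its spectrum on $\Weil_0'$ is $\mu_A\setminus\{1\}$, which is exactly the spectrum of $g_0$ on $\mathcal{G}_0$ (these are the upstairs characters of $\mathcal{H}_0$). The equality $Y(g_0)\cdot(\mu_A\setminus\{1\})=\mu_A\setminus\{1\}$ forces $Y(g_0)=1$, and since $Y|_L=\mathds{1}$ and $G=\langle g_0,L\rangle$, one concludes $Y=\mathds{1}$. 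Replace your argument for (a) with this, and the rest of your write-up goes through.
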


\begin{proof}
We know that the restriction of the monodromy representation of $\mathcal{W}$ to $L$ is the same as the  restriction of $\Weil_0'\oplus\bigoplus_{j=1}^{q-2}\Weil_j$ to $\SL_n(\mathbb{F}_q)$. The only irreducible constituent of rank $A-1$ of them are $\mathcal{G}_0$ and $\Weil_0'$. Therefore, $\mathcal{G}_0$ as a representation of $\GL_n(\mathbb{F}_q)$ is $\Weil_0'\otimes Y$ for some one-dimensional representation $Y$ of $\GL_n(\mathbb{F}_q)$ which is trivial on $\langle \alpha^d I\rangle$. Since $g_0$ has simple spectrum on $\mathcal{G}_0$, it also has simple spectrum on $\Weil_0'$. By \autoref{m2sp_elts}, $g_0$ as an element of $\GL_n(\mathbb{F}_q)/\langle \alpha^d I\rangle$ is the image of $\alpha_n^a\in \GL_n(\mathbb{F}_q)$ for some integer $a$ relatively prime to $A$. The spectrum of the action of $\alpha_n^a$ on $\Weil_0'$ is the set of $A$th roots of unity other than $1$, which is exactly the spectrum of $g_0$ on $\mathcal{G}_0$. Therefore, as a representation of $\GL_n(\mathbb{F}_q)/\langle \alpha^d I\rangle$, $Y$ contains both $g_0$ and $L$ in the kernel. Since $G=\langle g_0, L\rangle$, $Y$ is trivial. Therefore, $\overline{\mathbb{Q}_\ell}\oplus \mathcal{W}$ as a representation of $\GL_n(\mathbb{F}_q)$ satisfies the condition of \autoref{twisted_Weil}. By \autoref{twisted_Weil}, $\mathcal{W}$ must be $\Weil_0'\oplus \bigoplus_{j=1}^{q-2} \Weil_j\otimes X^{ej}$ for some integer $e$, where $X$ is as in \autoref{twisted_Weil}. 

The restriction of $\Weil_j\otimes X^{ej}$ to $\mathbf{Z}(\GL_n(\mathbb{F}_q)) = \langle \alpha I\rangle$ maps $\alpha I$ to $\lambda^{j + nej}=\lambda^{(ne+1)j}$. Since the kernel of the monodromy representation of $\mathcal{W}$ is $\langle \alpha^d I \rangle$, it follows that $\lambda^{(ne+1)j}$ has order dividing $d$ for all $j$, and order exactly $d$ for at least one $j$. Therefore $ne+1$ has order exactly $d$ in $\mathbb{Z}/(q-1)\mathbb{Z}$.
\end{proof}

We finish this section with the following result on the geometric monodromy group of a Kummer pullback of $\mathcal{W}$. The proof is entirely analogous to \cite[Corollary 8.4]{KT1}.

\begin{crl}\label{Ggeom_pullback_by_d}
Let $f$ be a divisor of $d$. The geometric monodromy group $G_f$ of the Kummer pullback $[f]^*\mathcal{W}$ is $(\SL_n(\mathbb{F}_q)\rtimes \langle \diag(\alpha^f, 1, \dots, 1)\rangle)/\langle \alpha^d I\rangle $.
\end{crl}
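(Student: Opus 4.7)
The plan is to identify $G_f$ as an explicit index-$f$ subgroup of $G\cong\GL_n(\mathbb{F}_q)/\langle\alpha^dI\rangle$ by combining general Kummer-pullback theory with a computation of a determinant character, following the strategy of \cite[Corollary 8.4]{KT1}. Throughout, let $\eta:G\to\mu_d$ denote the character corresponding to the geometric determinant of a suitable summand $\mathcal{G}_{j_0}$ of $\mathcal{W}$: from the proof of \autoref{Ggeom_of_sum}, one may take $j_0=1$ when $A$ is odd, and the $j_0$ singled out via $\chi_2$ when $A$ is even, in either case arranging $\det(\mathcal{G}_{j_0})\cong\mathcal{L}_{\chi^{b+c}}$ geometrically. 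Because $\eta$ has order $d=|G/L|$ and its kernel contains $L=[G,G]=\SL_n(\mathbb{F}_q)$ (using that $L$ is perfect for $n\geq 3$), it realizes the canonical surjection $G\twoheadrightarrow G/L\cong\mu_d$, and under the isomorphism $G\cong\GL_n(\mathbb{F}_q)/\langle\alpha^dI\rangle$ it is given explicitly by $g\mapsto\chi^{b+c}(\det g)$.

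First I would bound $[G:G_f]$ from above. Since $[f]:\mathbb{G}_m\to\mathbb{G}_m$ is a tame finite \'etale Galois cover of degree $f$ with group $\mu_f$, the subgroup $H_f:=[f]_*\pi_1^{\geom}(\mathbb{G}_m)$ is normal in $\pi_1^{\geom}(\mathbb{G}_m)$ of index $f$ with cyclic quotient, so $G_f=\rho_{\mathcal{W}}(H_f)$ is normal in $G$ with cyclic quotient of order dividing $f$; in particular $G_f\supseteq L$. Next I would bound $[G:G_f]$ from below using $\eta$. Pullback of Kummer sheaves along $[f]$ raises characters to the $f$-th power, so $[f]^*\det(\mathcal{G}_{j_0})=\mathcal{L}_{\chi^{f(b+c)}}$ has order $d/\gcd(d,f)=d/f$ because $f\mid d$. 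Equivalently, identifying the tame quotient $\pi_1^{\mathrm{tame},\geom}(\mathbb{G}_m)$ with $\widehat{\mathbb{Z}}^{(p')}$ on which $[f]_*$ acts as multiplication by $f$, the character $\tau:=\eta\circ\rho_{\mathcal{W}}$ satisfies $\tau\circ[f]_*=\tau^f$ and thus its restriction to $H_f$ has image $\mu_{d/f}$. Hence $\eta(G_f)=\mu_{d/f}$, so $G_f\subseteq\eta^{-1}(\mu_{d/f})$, a subgroup of index exactly $f$ in $G$. Combined with $[G:G_f]\mid f$, this forces $G_f=\eta^{-1}(\mu_{d/f})$.

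Finally I would identify this preimage concretely. Since $\ker(\chi^{b+c})=\langle\alpha^d\rangle$ inside $\mathbb{F}_q^\times$, the preimage of $\mu_{d/f}$ under $\chi^{b+c}$ is the unique index-$f$ subgroup of $\mathbb{F}_q^\times$, namely $\langle\alpha^f\rangle$. Therefore the preimage in $\GL_n(\mathbb{F}_q)$ under $g\mapsto\chi^{b+c}(\det g)$ is $\det^{-1}(\langle\alpha^f\rangle)=\SL_n(\mathbb{F}_q)\rtimes\langle\diag(\alpha^f,1,\ldots,1)\rangle$; noting that $\alpha^dI\in\det^{-1}(\langle\alpha^f\rangle)$ because $f\mid d\mid dn$, passing to the quotient modulo $\langle\alpha^dI\rangle$ yields $G_f=(\SL_n(\mathbb{F}_q)\rtimes\langle\diag(\alpha^f,1,\ldots,1)\rangle)/\langle\alpha^dI\rangle$, as claimed. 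The most delicate step is the verification of the pullback identity $\tau\circ[f]_*=\tau^f$, which amounts to identifying $[f]_*$ on $\pi_1^{\mathrm{tame},\geom}$ with multiplication by $f$ on $\widehat{\mathbb{Z}}^{(p')}$ under the canonical normalization of the tame quotient; this is standard but basis-dependent, and is the technical point already implicit in \cite[Corollary 8.4]{KT1}. Everything else is bookkeeping.
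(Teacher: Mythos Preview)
Your proposal is correct and is precisely the argument the paper intends: the paper's own proof is the single sentence ``The proof is entirely analogous to \cite[Corollary 8.4]{KT1},'' and you have carried out that analogy in detail. One small point: your assertion that under the isomorphism $G\cong\GL_n(\mathbb{F}_q)/\langle\alpha^dI\rangle$ the character $\eta$ is literally $g\mapsto\chi^{b+c}(\det g)$ is not quite justified (the isomorphism $F$ in \autoref{Ggeom_of_sum} involves a central twist by $z_0$), but it is also unnecessary---all you use is that $\eta\circ F$ is a character of $\GL_n(\mathbb{F}_q)$ of order exactly $d$ trivial on $\SL_n(\mathbb{F}_q)$, and any such character has the same preimage $\det^{-1}(\langle\alpha^f\rangle)$ over $\mu_{d/f}$.
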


\section{Abhyankar's Theorem on Galois Groups of Trinomials}

In \cite[Section 9]{KT1}, Katz and Tiep related their hypergeometric sheaves to Abhyankar's result \cite[Theorem 1.2]{Ab94} on the Galois groups of certain polynomials. Since those sheaves are precisely the sheaf $\mathcal{W}$ in the previous section with $m=n-1$, $b=1$ and $c=0$, it is natural to ask if this connection can be generalized to other values of $m$.

Consider the polynomial described in \cite[Theorem 1.2]{Ab94}: $$F(T, U) := T^{q^n-1} - xU^rT^{q^m-1} + yU^s\in \overline{\mathbb{F}_q}(U)[T]$$ where $n, m$ are integers relatively prime to each other, $x, y$ are nonzero elements in $\overline{\mathbb{F}_q}$, and $r, s$ are nonnegative integers such that $r(q^n-1)\neq s(q^n-q^m)$. Let $y', z\in \overline{\mathbb{F}_q}$ be numbers such that $(y')^{q^n-1}=y$, $z^{r(q^n-1)-s(q^n-q^m)} = x^{-1}(y')^{q^n-q^m}$ and let $K_0$ be a finite extension of $\mathbb{F}_q$ such that $x, y', z\in K_0$. The Galois group of this polynomial over $\overline{\mathbb{F}_q}(U)$ is the geometric monodromy group $G_{\geom, \mathcal{A}}$ of the lisse $\overline{\mathbb{Q}_\ell}$-sheaf $\mathcal{A}$ over $\mathbb{G}_m/K_0$ whose trace at $u\in K^\times$ for a finite extension $K$ of $K_0$ is the number of solutions of the equation $$F(T,u)=T^{q^{n}-1} - xu^rT^{q^m-1}+yu^s =0.$$ If we take the $[q^n-1]^*$ Kummer pullback, then we get a lisse sheaf over $\mathbb{G}_m/K_0$ whose trace at $u\in K^\times$ is 
\begin{align*}
&|\{w \in K^\times \mid w^{q^n-1} - xu^{(q^n-1)r}w^{q^m-1} + yu^{(q^n-1)s} = 0\}|\\=&|\{w \in K^\times \mid (b'u^sw)^{q^n-1} - xu^{(q^n-1)r}(b'u^sw)^{q^m-1} + yu^{(q^n-1)s} = 0\}|\\=&|\{w \in K^\times \mid w^{q^n-1} - x(y')^{-q^n+q^m}u^{r(q^n-1)-s(q^n-q^m)}w^{q^m-1} + 1 = 0\}|
\end{align*}
The geometric monodromy group $G_{\geom, [q^n-1]^*\mathcal{A}}$ of this sheaf satisfies $$G_{\geom, [q^n-1]^*\mathcal{A}} \unlhd G_{\geom, \mathcal{A}},  G_{\geom, \mathcal{A}} /G_{\geom, [q^n-1]^*\mathcal{A}}\text{ is cyclic of order dividing }q^n-1.$$ 

Let $\mathcal{B}$ be the lisse sheaf over $K_0$ obtained by taking multiplicative translate $[u\mapsto zu]^*[q^n-1]^*\mathcal{A}$. The geometric monodromy group $G_{\geom, \mathcal{B}}$ is the same as $G_{\geom,[q^n-1]^*\mathcal{A}}$, and the trace of $\mathcal{B}$ at $u\in K^\times$ is 
\begin{align*}
&|\{w \in K^\times \mid w^{q^n-1} - x(y')^{-q^n+q^m}(zu)^{r(q^n-1)-s(q^n-q^m)}w^{q^m-1} + 1 = 0\}|\\=&|\{w \in K^\times \mid w^{q^n-1} - u^{r(q^n-1)-s(q^n-q^m)}w^{q^m-1} + 1 = 0\}|.
\end{align*}
Since $r(q^n-1)-s(q^n-q^m)$ is a nonzero multiple of $q-1$, $\mathcal{B}$ is geometrically isomorphic to the $[(r(q^n-1)-s(q^n-q^m))/(q-1)]^*$ Kummer pullback of the lisse sheaf $\mathcal{C}$ over $\mathbb{G}_m/K_0$ whose trace at $u\in K^\times$ is 
\begin{align*}
&|\{w \in K^\times \mid w^{q^n-1} - u^{q-1}w^{q^m-1} + 1 = 0\}|.
\end{align*}
On the other hand, if we choose integer $b,c$ such that $n, m, b, c$ satisfies the conditions of \autoref{large_rank}, and use the notations from the previous section, then the $[q^m]^*$ Frobenius pullback of $\mathcal{C}$ has trace 
\begin{align*}
&|\{w \in K^\times \mid w^{q^n-1} - u^{(q-1)q^m}w^{q^m-1} + 1 = 0\}|\\=&|\{w \in K^\times \mid w^{q^n-1} - u^{b(q^n-1)-(b+cq^m)(q^m-1)}w^{q^m-1} + 1 = 0\}|\\=&|\{w \in K^\times \mid (u^{b+cq^m}w)^{q^n-1} - u^{b(q^n-1)-(b+cq^m)(q^m-1)}(u^{b+cq^m}w)^{q^m-1} + 1 = 0\}|\\=&|\{w \in K^\times \mid u^{(b+cq^m)(q^n-1)}w^{q^n-1} - u^{b(q^n-1)}w^{q^m-1} + 1 = 0\}|.
\end{align*}
Therefore, by \autoref{direct_sum_trace}, $[q^m]^*\mathcal{C}$ is geometrically isomorphic to $[q^n-1]^*(\overline{\mathbb{Q}_\ell}\oplus\mathcal{W})$. 

Using these geometric isomorphisms, we can show that the geometric monodromy group of $\mathcal{C}$ is isomorphic to that of $[q-1]^*\overline{\mathbb{Q}_\ell}\oplus \mathcal{W}$, which is $\SL_n(\mathbb{F}_q)$ by \autoref{Ggeom_pullback_by_d}. Hence, $G_{\geom, \mathcal{B}}$ are also isomorphic to $\SL_n(\mathbb{F}_q)$, since $\mathcal{B}$ is geometrically isomorphic to $[(r(q^n-1)-s(q^n-q^m))/(q-1)]^*\mathcal{C}$ as we saw above. Since $G_{\geom, [q^n-1]^*\mathcal{A}}$ is isomorphic to $G_{\geom, \mathcal{B}}$, it follows that $G_{\geom, \mathcal{A}}$ contains a normal subgroup $\SL_n(\mathbb{F}_q)$, and $G_{\geom, \mathcal{A}}/\SL_n(\mathbb{F}_q)$ is cyclic of order dividing $q^n-1$. On the other hand, the $q^n-1$ roots of $F(T,U)\in \overline{\mathbb{F}_q}(U)[T]$ together with $0$ form an $n$-dimensional $\mathbb{F}_q$-vector space. Each element of the Galois group acts $\mathbb{F}_q$-linearly on this set, so the Galois group $G_{\geom,\mathcal{A}}$ is contained in $\GL_n(\mathbb{F}_q)$. Thus we get the conclusion of case (3) of \cite[Theorem 1.2]{Ab94}.

\end{document}